\newcommand{\mc}[1]{\mathcal{#1}}
\newcommand{\ovl}[1]{\overline{#1}}
\newcommand{\varep}{ \varepsilon }
\newcommand{\sse} {\subseteq}
\newcommand{\Z}{\mathbb{Z}}
\newcommand{\R}{\mathbb{R}}
\newcommand{\E}{\mathbb{E}}
\newcommand{\ra}{\rightarrow}
\newcommand{\toinf}{\ra \infty}
\newcommand{\beq}{\begin{equation}}
\newcommand{\eeq}{\end{equation}}
\newtheorem{theorem}{Theorem}
\newtheorem{prop}[theorem]{Proposition}
\newtheorem{lemma}[theorem]{Lemma}
\theoremstyle{definition}
\newtheorem{remark}[theorem]{Remark}
\newcommand{\p}{\mathbb{P}}
\newcommand{\Var}[1]{\mathrm{Var}(#1)}
\newcommand{\lra}{\longrightarrow}
\numberwithin{equation}{section}
\numberwithin{theorem}{section}
\newcommand{\ind}{\mathbbm{1}}
\newcommand{\ptl}{\partial}
\DeclareMathOperator*{\arginf}{arg\,inf}
\newcommand{\chacorr}{C}
\newcommand{\renyicorr}{R}
\newcommand{\corr}{C}
\newcommand{\pearsoncorr}{\rho}
\newcommand{\cone}{\mc{H}}
\newcommand{\jointdist}{P}
\newcommand{\varemp}[1]{\mathrm{Var}_n(#1)}
\newcommand{\propertyp}{Property P}
\newcommand{\renyi}{R\'{e}nyi}
\newcommand{\moncone}{\mc{M}}
\newcommand{\moncorr}{\chacorr_{mon}}
\newcommand{\convmin}{M}
\newcommand{\dens}{\phi}
\begin{document}

\title[Correlations with tailored extremal properties]{Correlations with tailored extremal properties}
\author{Sky Cao}
\address{\newline Department of Statistics \newline Stanford University\newline Sequoia Hall, 390 Jane Stanford Way \newline Stanford, CA 94305\newline\textup{\tt skycao@stanford.edu}}
\thanks{S.C. was supported by NSF grant DMS RTG 1501767}

\author{Peter J. Bickel}
\address{\newline Department of Statistics \newline University of California, Berkeley \newline Berkeley, California 94720-3860 \newline \textup{\tt bickel@stat.berkeley.edu}}

\keywords{Correlation, shape-restricted regression, isotonic regression.}
\subjclass[2020]{62H20, 62H15}

\begin{abstract}
Recently, Chatterjee has introduced a coefficient of correlation which has several natural properties. In particular, the population version of the coefficient, which generalizes an earlier one of Dette et al., attains its maximal value if and only if one variable is a measurable function of the other variable. In this paper, we seek to define correlations which have a similar property, except now the measurable function must belong to a pre-specified class, which amounts to a shape restriction on the function. We will then look specifically at the correlation corresponding to the class of monotone nondecreasing functions, in which case we can prove various asymptotic results, as well as perform local power calculations. We will also perform local power calculations for Chatterjee's correlation, and for an older one of Dette et al.
\end{abstract}

\maketitle

\section{Introduction}

In a remarkable paper \cite{CH2019}, Sourav Chatterjee proposed a new coefficient of correlation based on an i.i.d. sample $(X_i, Y_i), i = 1, \ldots, n$. Assuming there are no ties among the $X_i$'s and $Y_i$'s (see \cite{CH2019} for the definition in the general case), the correlation is defined as
\beq\label{eq:chacorr-est-def} \hat{\chacorr}_n(X, Y) := 1 - \frac{3 \sum_{i=1}^{n-1} |r_{i+1} - r_i|}{n^2-1}, \eeq
where the $r_i$ are defined as follows. First, sort $X_{(1)} \leq \cdots \leq X_{(n)}$, and for each $i$ let $Y_{(i)}$ be the $Y$ sample corresponding to $X_{(i)}$. Then $r_i$ is defined as the rank of $Y_{(i)}$, i.e. the number of $j$ such that $Y_j \leq Y_{(i)}$. Chatterjee showed that as $n \toinf$, $\hat{\chacorr}_n$ converges a.s. to the population measure
\beq\label{eq:chacorr-def} \chacorr(X, Y) := \frac{\int \Var{\E[\ind(Y \geq t) ~|~ X]} d\mu(t)}{\int \Var{\ind(Y \geq t)} d\mu(t)} = 1 - \frac{\int \E [\mathrm{Var}(\ind(Y \geq t) ~|~ X) ] d\mu(t)}{\int \Var{\ind(Y \geq t)} d\mu(t)},\eeq
where $\mu$ is the law of $Y$. Here $Y$ is assumed to not be constant. In the case where $X, Y$ are continuously distributed, this measure was introduced by Dette et al. \cite{DSS2013} -- see Remark \ref{remark:other-work}. The measure $\chacorr$ has a number of interesting properties:
\begin{enumerate}[label=\Alph*)]
    \item $0 \leq \chacorr \leq 1$.
    \item $\chacorr = 0$ if and only if $X$ and $Y$ are independent.
    \item $\chacorr = 1$ if and only if $Y = h(X)$ a.s. for some measurable function $h : \R \ra \R$.
    \item $\chacorr$ is asymmetric, but can be easily symmetrized to
    \[ \chacorr^*(X, Y) := \max(\chacorr(X, Y), \chacorr(Y, X)),\]
    which clearly satisfies $\chacorr^* = 1$ if and only if $X$ is a function of $Y$ or $Y$ is a function of $X$ (or both).
    \item $\chacorr$ is invariant under strictly increasing transformations of $X$ and $Y$ separately.
\end{enumerate}

This measure is akin to the {\renyi} correlation (also commonly called the maximal correlation), which we shall denote $\renyicorr$ or $\renyicorr(X, Y)$, and is defined as the maximum Pearson correlation between all pairs of $L^2$ functions of $X$ and $Y$ respectively. $R$ may be computed as the square root of the maximal eigenvalue of a compact self adjoint operator,
\[ T : L^2_0(X) \ra L^2_0(X), \]
(or $L^2_0(Y) \ra L^2_0(Y)$ with appropriate changes), where $L^2_0(X)$ is the subspace of $L^2(X)$ consisting of mean zero random variables. The operator $T$ is given by
\[ T(f(X)) := \E[\E[f(X) ~|~ Y] ~|~ X].\]
The {\renyi} correlation is well known to have properties $A, B,$ and $D$, but is symmetric, and 
\begin{enumerate}[label=C*)]
    \item $\renyicorr = 1$ if and only if $g(Y) = h(X)$ for some functions $g$ and $h$, with $g(X) \in L^2(X)$ and $h(Y) \in L^2(Y)$.
\end{enumerate}
An extensive account of the history, computation, and other properties of $\renyicorr$ may be found for instance in \cite{BKRW1993, Renyi1959}.

An advantage of $\chacorr$ is that it gives a clear indication of the functional relationship between $X$ and $Y$ when $\chacorr = 1$. More significantly, an empirical estimate is explicit and simply computable for finite $n$, while unless $X$ and $Y$ are discrete, the empirical version of $\renyicorr$ may only be approximated. On the other hand, $\renyicorr$ is defined if $X$ and $Y$ take values in $\R^p$ and $\R^q$ or more general spaces. Our purposes in this paper are:
\begin{enumerate}
    \item To relate $\chacorr$ more closely to $\renyicorr$.
    \item To extend $\chacorr$ to situations where the $h$ appearing in C) is specified to be monotone or more generally shape restricted.
    \item To study the asymptotic behavior of the sample versions of such measures under independence, when they can be used for testing, as was done by Chatterjee for $\hat{\chacorr}_n$.
\end{enumerate}

\section{Relation between {$\chacorr$} and {$\renyicorr$}}

Note that $\chacorr$ and $\renyicorr$ are unsigned and may be viewed as absolute rather than signed measures of dependence. In fact, we shall argue that $\chacorr$ is closely related not to $\renyicorr$ but to $\renyicorr^2$, the largest eigenvalue of $T$. We begin with the solution to a simpler problem of {\renyi's} or Chatterjee's. For $f_0(Y) \in L^2(Y)$, let
\beq\label{eq:max-corr-def} M(X, f_0(Y)) := \sup_{g(X) \in L^2(X)} \pearsoncorr(f_0(Y), g(X))^2 , \eeq
where $\pearsoncorr$ denotes the Pearson correlation. By Cauchy-Schwarz, or more insightfully the identity, valid if $\Var{f_0(Y)} = \Var{g(X)} = 1$, 
\[ \pearsoncorr(f_0(Y), g(X)) = 1 - \frac{1}{2} \Var{f_0(Y) - g(X)}, \]
a maximizer in \eqref{eq:max-corr-def} is $g(X) = \E[f_0(Y) ~|~ X]$. Hence, 
\beq\label{eq:max-corr-formula} M(X, f_0(Y)) = \frac{\Var{\E [f_0(Y) ~|~ X]}}{\Var{f_0(Y)}} = 1 - \frac{\E \mathrm{Var}(f_0(Y) ~|~ X)}{\Var{f_0(Y)}}. \eeq
If we put $f_0(Y) = Y$ we obtain a measure which satisfies A) and C) but not B) and D). To obtain the last two properties we need only observe that $M = 0$ implies $\E [f_0(Y) ~|~ X] = \E f_0(Y)$. Then to obtain B) and D) note that independence is equivalent to $M(X, \ind(Y > y)) = 0$ for all $y$ in a set $S$ such that $\p(Y \in S) = 1$. This leads fairly naturally to defining $\chacorr$ as in \eqref{eq:chacorr-def}. We note here that this is related to the observation by Azadkia and Chatterjee \cite[Section 3]{AC2019} that $C$ is a mixture of partial $R^2$ statistics. 

There are evidently many ways of constructing such coefficients, but Chatterjee's is particularly elegant since it is easy to see if $F, G$ are the cdfs of $X, Y$ respectively, then $X, Y$ can be replaced by $F(X), G(Y)$, leading naturally to an estimate like $\hat{\chacorr}_n$ which depends on the paired ranks of $(X_i, Y_i)$ (with ties broken at random). Evidently, the empirical estimate of $\chacorr$ requires an estimate of $\E[ \ind(Y \geq t) ~|~ X]$ or $\mathrm{Var}(\ind(Y \geq t) ~|~ X)$. Chatterjee does the latter in a clever way. See also Remark \ref{remark:estimate-conditional-variance}.

\begin{remark}\label{remark:other-work}
\begin{enumerate}
    \item In 2013, Dette et al. \cite{DSS2013} proposed \eqref{eq:chacorr-def} in the case of continuously distributed $X, Y$ as a parameter having properties A)-C) with a different empirical estimate of $C$ than $\hat{C}_n$, which, itself, may be viewed as an estimate of the second expression in \eqref{eq:chacorr-def}. Consistency of the estimate, which they establish, requires smoothness conditions on the conditional distribution and choice of a bandwidth since it requires a growing neighborhood of $X_{(i)}$. Thus, it is not as general.
    \item Independently of our work but at the same time, Shi et al. \cite{SDH2020} made an extensive comparison between $\hat{C}_n$, Dette et al.'s statistic, and other classical tests of the hypothesis of independence which are consistent against all alternatives as is $\hat{C}_n$. They also studied the local power of $\hat{C}_n$, Dette et al.'s statistic, and these other tests against a wide class of contiguous alternatives and showed that $\hat{C}_n$ has no power locally, while the performance of Dette et al.'s may have no power in some cases and yet be rate optimal in others. We had made a more limited comparison and came to similar conclusions in our original posting. However, partly inspired by their results, we show in Section \ref{section:power-calculations} that $\hat{C}_n$ has no power against a wide class of contiguous alternatives. We discuss this and the local power of Dette et al.'s statistic in Sections \ref{section:power-calculations} and \ref{section:semiparametric}. 
    \item The overlap between our work and Shi et al.'s is entirely in Sections \ref{section:power-calculations} and \ref{section:semiparametric}. The results in these sections complement the results of Shi et al. \cite{SDH2020} -- our main focus is giving general conditions under which Chatterjee's statistic and Dette et al.'s statistic have no local power, while the focus of Shi et al. is to exhibit explicit families of alternatives for which the performance of $\hat{C}_n$ and Dette et al.'s statistic is demonstrably worse than other classical tests of independence. 
\end{enumerate}
\end{remark}

The problem we mainly intend to address in this note is how to construct and give appropriate measures for which the value 1 corresponds to a shape restriction on the form of $h(\cdot)$ such that  $Y = h(X)$. Such restrictions are discussed by Guntuboyina and Sen in \cite{GS2018}. They include monotonicity, a special case as we shall see, convexity or concavity, log concavity, and a number of others. They are characterized by the requirement that $h$ belongs to a collection of functions $\cone$, where $\cone$ is such that $\cone_X := \{h(X) : h \in \cone, h(X) \in L^2(X)\}$ is a closed, convex subset in $L^2(X)$. The method we prescribe follows from formula \eqref{eq:max-corr-def}. It is well known that for $\cone$ as above, there exists a nonlinear operator $\Pi_{\cone_X} : L^2(X, Y) \ra \cone_X$ such that for $f(X, Y) \in L^2(X, Y)$,
\[ \Pi_{\cone_X}(f(X, Y)) = \arginf_{h(X) \in \cone_X} \E(f(X, Y) - h(X))^2 .\]
If we substitute $\Pi_{\cone_X}(Y)$ for $f_0(Y)$ or $\E[f_0(Y) ~|~ X]$ in \eqref{eq:max-corr-def}, we evidently get a measure $\chacorr_\cone(X, Y)$ such that $\chacorr_\cone = 1$ if and only if $Y = h(X)$ a.s. with $h(X) \in \cone_X$. If $\cone$ and therefore $\cone_X$ is a convex cone, then it is well known that 
\[ \mathrm{Cov}(Y, \Pi_{\cone_X} (Y)) = \Var{\Pi_{\cone_X}(Y)}.\]
So $\chacorr_\cone$ is also given by \eqref{eq:max-corr-formula} if $\Pi_{\cone_X}(Y)$ replaces $\E[Y ~|~ X]$. That is,
\beq\label{eq:chacorr-cone-def}  \chacorr_\cone := \frac{\Var{\Pi_{\cone_X}(Y)}}{\Var{Y}}.\eeq
Unfortunately, while this measure satisfies properties A), C), and D) above it doesn't satisfy B). However, this can be easily remedied by defining
\beq\label{eq:corr-def} \tilde{\chacorr}_\cone(X, Y) := \frac{1}{2}(\chacorr(X, Y) + \chacorr_\cone(X, Y)),\eeq
which is easily seen to satisfy all of A)-D).

\section{Empirical estimates of $\tilde{\corr}$}

Let $\cone$ be a convex cone of $\ovl{\R}$-valued functions on $\R^d$ (here $\ovl{\R} = \R \cup \{\pm \infty\}$ is the extended real line), containing the constant functions. We now turn to the construction of empirical estimates for $\tilde{\corr}_\cone$. We first consider the general case. By \eqref{eq:corr-def} it is enough to consider $\chacorr_\cone$.  Given a probability measure $\mu$ on $\R^d$, define
\[ \cone_\mu := \cone \cap L^2(\mu) = \{h \in \cone : h \in L^2(\mu)\}. \]
Note $\cone_\mu$ is itself a convex cone. We will assume moreover that for all $\mu$, $\cone_\mu$ is closed in $L^2(\mu)$. (This assumption of closedness is why we need to work with $\ovl{\R}$-valued functions as opposed to $\R$-valued functions.)  For $B > 0$, define
\[ \cone^B := \bigg\{h \in \cone : \sup_{x \in \R^d} |h(x)| \leq B\bigg\}. \]

Let $(X, Y)$ have some joint distribution $\jointdist$, with $X \in \R^d$ and $Y \in \R$. Assume that $Y$ has finite second moment. Let $(X_1, Y_1), \ldots, (X_n, Y_n)$ be i.i.d. samples from $\jointdist$. Let $\mu_X$ be the law of $X$. Let $g \in \cone_{\mu_X}$ be such that
\[ \E(Y - g(X))^2 = \inf_{h \in \cone_{\mu_X}} \E (Y - h(X))^2. \]
By our assumption that $\cone_{\mu_X}$ is a closed convex cone, it follows that $g$ exists and is unique in $L^2(\mu_X)$. Note that $g(X)$ can be interpreted as the projection of $Y$ onto the convex cone $\cone_X  = \{h(X) : h \in \cone_{\mu_X}\} \sse L^2(X)$, so that in the notation of the previous section, we have $\Pi_{\cone_X}(Y) = g(X)$ a.s.

Let $\mu_n$ be the empirical distribution of the $X$ samples, and let $\hat{g}_n$ be such that
\[ \frac{1}{n}\sum_{i=1}^n (Y_i - \hat{g}_n(X_i)^2) = \inf_{h \in \cone_{\mu_n}} \frac{1}{n} \sum_{i=1}^n (Y_i - h(X_i))^2. \]
Again, $\hat{g}_n$ exists and is unique in $L^2(\mu_n)$. Given a function $f(X, Y)$, let
\[ \varemp{f(X, Y)} := \frac{1}{n} \sum_{i=1}^n f(X_i, Y_i)^2 - \bigg(\frac{1}{n} \sum_{i=1}^n f(X_i, Y_i)\bigg)^2, \]
i.e. $\varemp{f(X, Y)}$ is the empirical variance of $f(X, Y)$. Following \eqref{eq:chacorr-cone-def}, define now the empirical correlation
\[ \hat{\corr}_\cone (X, Y) = \hat{\corr}_{\cone, n}(X, Y) := \frac{\varemp{\hat{g}_n(X)}}{\varemp{Y}}. \]
Note that if $\cone$ is the cone of monotone nondecreasing functions, then $\hat{\corr}_\cone$ is the ratio of the empirical variance of the empirical isotonic regression of $Y$ on $X$ to the empirical variance of $Y$.

We now make assumptions on $\cone$ which guarantee convergence of $\hat{\corr}_\cone(X, Y)$ to $\corr_\cone(X, Y)$. The cone $\cone$ is said to have {\propertyp} if in addition to containing the constant functions and $\cone_\mu$ being closed in $L^2(\mu)$ for all probability measures $\mu$, the following two conditions are satisfied for any joint distribution $(X, Y)$ such that $|Y| \leq B$ a.s. for some $B \geq 0$:

\begin{enumerate}
    \item {\bf Boundedness.} We have
    \[ \inf_{h \in \cone^B} \E (Y - h(X))^2 = \inf_{h \in \cone_{\mu_X}} \E(Y - h(X))^2. \]
    Consequently, we may assume that $\sup_{x \in \R^d} |g(x)| \leq B$.
    \item {\bf Glivenko-Cantelli.} We have
    \[ \sup_{h \in \cone^B} \bigg| \frac{1}{n} \sum_{i=1}^n (Y_i - h(X_i))^2 -  \E (Y - h(X))^2 \bigg| \stackrel{a.s.}{\lra} 0.\]
\end{enumerate}

\begin{prop}\label{prop:corr-convergence-property-p}
If $\cone$ satisfies {\propertyp}, then for any $(X, Y)$ where $Y$ is a.s. not a constant and has finite second moment, we have that $\hat{\corr}_\cone(X, Y) \stackrel{a.s.}{\ra} \corr_\cone(X, Y)$.
\end{prop}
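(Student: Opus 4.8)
The plan is to reduce to bounded $Y$ by truncation (using that metric projections onto convex cones are non-expansive), then in the bounded case to invoke \propertyp{} to sandwich the empirical mean-squared error between population quantities, and finally to convert MSE convergence into convergence of the empirical variances by exploiting the first-order relations that hold because $\cone$ is a convex cone containing the constants. For the reduction, given $B>0$ set $Y^B := (Y \vee (-B)) \wedge B$, let $g^B := \Pi_{\cone_X}(Y^B)$, and let $\hat g_n^B$ be the empirical least-squares fit from $(X_i, Y_i^B)$. Since the projection onto $\cone_X \sse L^2(\mu_X)$ (resp. onto $\cone_{\mu_n}$ in $L^2(\mu_n)$) is $1$-Lipschitz and fixes $0$, one gets $\|g(X)-g^B(X)\| \le \|Y-Y^B\|$ in $L^2(\mu_X)$, $\frac1n\sum_i(\hat g_n(X_i)-\hat g_n^B(X_i))^2 \le \frac1n\sum_i(Y_i-Y_i^B)^2$, and $\frac1n\sum_i\hat g_n(X_i)^2 \le \frac1n\sum_i Y_i^2$ (with the analogous bounds for the truncated objects). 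Combining these with the SLLN applied to $\frac1n\sum_i Y_i^2$ and $\frac1n\sum_i(Y_i-Y_i^B)^2$, and using $\varemp{Y}\to\Var{Y}>0$, $\varemp{Y^B}\to\Var{Y^B}$, I would show that both $\limsup_n|\hat\corr_\cone(X,Y)-\hat\corr_\cone(X,Y^B)|$ and $|\corr_\cone(X,Y)-\corr_\cone(X,Y^B)|$ are at most $c\,\|Y-Y^B\|_{L^2}$ for a constant $c$ depending only on $\E Y^2$ and $\Var{Y}$, uniform in $B$ once $B$ is large enough that $\Var{Y^B}\ge\tfrac12\Var{Y}$. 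As $\|Y-Y^B\|_{L^2}\to0$ when $B\toinf$, a standard $\varep/3$ argument over a sequence $B\to\infty$ (intersecting countably many a.s. events) reduces everything to the case $|Y|\le B$ a.s.

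Assume now $|Y|\le B$ a.s. The \textbf{Boundedness} clause of \propertyp{}, applied to $\jointdist$, lets us take $g\in\cone^B$; applied to the empirical distribution of $(X_i,Y_i)$, which also satisfies the bound, it lets us take $\hat g_n\in\cone^B$ (this is harmless since $\varemp{\hat g_n(X)}$ depends only on the values of $\hat g_n$ at the sample). Because $g\in\cone^B\sse\cone_{\mu_n}$ and $\hat g_n$ is the empirical minimizer, $\frac1n\sum_i(Y_i-\hat g_n(X_i))^2 \le \frac1n\sum_i(Y_i-g(X_i))^2 \to \E(Y-g(X))^2$ a.s. by the SLLN. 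Conversely, the \textbf{Glivenko--Cantelli} clause gives $\frac1n\sum_i(Y_i-\hat g_n(X_i))^2 \ge \E(Y-\hat g_n(X))^2 - \varep_n$ with $\varep_n\to0$ a.s., and $\E(Y-\hat g_n(X))^2 \ge \E(Y-g(X))^2$ since $\hat g_n\in\cone^B\sse\cone_{\mu_X}$ and $g$ is the population minimizer. Hence $\frac1n\sum_i(Y_i-\hat g_n(X_i))^2 \to \E(Y-g(X))^2$ a.s.

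Since $\cone_{\mu_n}$ is a convex cone containing the constants, optimality of $\hat g_n$ forces $\frac1n\sum_i(Y_i-\hat g_n(X_i))\hat g_n(X_i)=0$ and $\frac1n\sum_i(Y_i-\hat g_n(X_i))=0$; likewise $\E[(Y-g(X))g(X)]=0$ and $\E[Y]=\E[g(X)]$. The first empirical relation gives $\frac1n\sum_i\hat g_n(X_i)^2 = \frac1n\sum_i Y_i^2 - \frac1n\sum_i(Y_i-\hat g_n(X_i))^2 \to \E Y^2 - \E(Y-g(X))^2 = \E[g(X)^2]$ a.s. (the last step using the population relations), and the second gives $\frac1n\sum_i\hat g_n(X_i) = \frac1n\sum_i Y_i \to \E Y = \E[g(X)]$. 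Therefore $\varemp{\hat g_n(X)} \to \E[g(X)^2] - (\E[g(X)])^2 = \Var{g(X)}$ a.s., and since $\varemp{Y}\to\Var{Y}>0$ (as $Y$ is not a.s.\ constant), $\hat\corr_\cone(X,Y) = \varemp{\hat g_n(X)}/\varemp{Y} \to \Var{g(X)}/\Var{Y} = \corr_\cone(X,Y)$, which combined with the reduction of the first paragraph completes the proof.

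The conceptual heart is the two-sided bound of the middle step: the empirical minimizer is trapped between $g$, which competes against it in the \emph{empirical} problem, and $g$ again, which $\hat g_n$ competes against in the \emph{population} problem, with \propertyp{} supplying the uniform law of large numbers that lets the two sides meet. Agreeably, one never needs consistency of $\hat g_n$ for $g$ in any $L^2$ sense — the cone/constant identities turn MSE convergence directly into variance convergence. The most technical part is the truncation step: one must verify that every functional in sight depends on the truncation level in a way that is controlled uniformly in $n$, which is exactly what non-expansiveness of convex projections (together with the crude bound $\frac1n\sum_i\hat g_n(X_i)^2\le\frac1n\sum_i Y_i^2$) delivers.
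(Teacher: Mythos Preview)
Your proof is correct and follows the same overall architecture as the paper's: truncate to reduce to bounded $Y$, handle the bounded case via \propertyp, and exploit the cone/constant identities to pass from MSE to variances. The differences are in the plumbing. In the bounded case the paper invokes the obtuse-angle inequality for projections onto closed convex sets,
\[
\tfrac{1}{n}\sum_i(Y_i-g(X_i))^2 \ \ge\ \tfrac{1}{n}\sum_i(Y_i-\hat g_n(X_i))^2 + \tfrac{1}{n}\sum_i(\hat g_n(X_i)-g(X_i))^2,
\]
to deduce $\tfrac{1}{n}\sum_i(\hat g_n(X_i)-g(X_i))^2\to 0$ and then reads off $\tfrac{1}{n}\sum_i\hat g_n(X_i)^2\to\E g(X)^2$ from this $L^2(\mu_n)$-consistency; you instead sandwich the empirical MSE directly between two quantities both tending to $\E(Y-g(X))^2$ and then use the cone identity $\tfrac{1}{n}\sum_i\hat g_n(X_i)^2=\tfrac{1}{n}\sum_iY_i^2-\tfrac{1}{n}\sum_i(Y_i-\hat g_n(X_i))^2$, which is slightly more economical since (as you remark) it never requires consistency of $\hat g_n$. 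For the truncation, the paper keeps the target as the cross-moment $\tfrac{1}{n}\sum_iY_i\hat g_n(X_i)$ and controls the effect of replacing $Y$ by $Y^B$ there via a three-term splitting, whereas you truncate at the level of the full correlation and run an $\varep/3$ argument; both routes rest on the $1$-Lipschitz property of the metric projection and the contraction bound $\tfrac{1}{n}\sum_i\hat g_n(X_i)^2\le\tfrac{1}{n}\sum_iY_i^2$, and are equally valid.
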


We first prove a preliminary result in the case of bounded $Y$.

\begin{lemma}\label{lemma:bounded-case-property-p}
Suppose further that $|Y| \leq B$ a.s. Then 
\[ \frac{1}{n} \sum_{i=1}^n \hat{g}_n(X_i)^2 \stackrel{a.s.}{\lra} \E g(X)^2. \]
\end{lemma}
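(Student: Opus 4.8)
The plan is to reduce the claim to the convergence of the empirical risk
\[ R_n := \frac1n\sum_{i=1}^n (Y_i - \hat g_n(X_i))^2 \stackrel{a.s.}{\lra} R := \E(Y - g(X))^2, \]
and then to extract both risks from the ``normal equations'' for projection onto a convex cone. Since $g = \Pi_{\cone_X}(Y)$ is the projection of $Y$ onto the closed convex cone $\cone_X \sse L^2(X)$, we have $\E[(Y - g(X))g(X)] = 0$, and hence $\E g(X)^2 = \E Y^2 - R$. Likewise $\hat g_n$ is the $L^2(\mu_n)$-projection of $Y$ onto the closed convex cone $\cone_{\mu_n}$, so $\frac1n \sum_{i=1}^n (Y_i - \hat g_n(X_i))\hat g_n(X_i) = 0$ and therefore $\frac1n\sum_{i=1}^n \hat g_n(X_i)^2 = \frac1n\sum_{i=1}^n Y_i^2 - R_n$. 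Because $\frac1n\sum_{i=1}^n Y_i^2 \to \E Y^2$ a.s. by the strong law of large numbers, it is enough to prove $R_n \to R$ a.s.

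For the upper bound, the Boundedness part of {\propertyp} applied to $(X,Y)$ lets us take $\sup_x |g(x)| \le B$, so $g \in \cone_{\mu_n}$ and
\[ R_n \le \frac1n\sum_{i=1}^n (Y_i - g(X_i))^2 \stackrel{a.s.}{\lra} \E(Y - g(X))^2 = R \]
by the strong law; hence $\limsup_n R_n \le R$ a.s. For the lower bound, apply Boundedness to the empirical distribution of $(X_i, Y_i)$, which also satisfies $|Y| \le B$, to conclude that $\hat g_n$ may be chosen in $\cone^B$ — a choice that does not change the numbers $\hat g_n(X_i)$, nor $R_n$. Set $\varep_n := \sup_{h \in \cone^B} \big| \frac1n\sum_{i=1}^n (Y_i - h(X_i))^2 - \E(Y - h(X))^2 \big|$; the Glivenko--Cantelli part of {\propertyp} gives $\varep_n \to 0$ a.s., while Boundedness for $(X,Y)$ gives $\inf_{h \in \cone^B}\E(Y - h(X))^2 = R$. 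Combining,
\[ R_n = \inf_{h \in \cone^B}\frac1n\sum_{i=1}^n (Y_i - h(X_i))^2 \ge \inf_{h \in \cone^B}\E(Y - h(X))^2 - \varep_n = R - \varep_n, \]
so $\liminf_n R_n \ge R$ a.s. Together with the upper bound this yields $R_n \to R$ a.s., and the lemma follows.

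The only delicate point is the double use of the Boundedness property — once to confine the population minimizer $g$ to $\cone^B$, and once to confine the empirical minimizer $\hat g_n$ to $\cone^B$, the latter being needed so that the Glivenko--Cantelli bound (a supremum over $\cone^B$, not over all of $\cone_{\mu_X}$) can actually be applied — together with the observation that replacing $\hat g_n$ by a bounded representative leaves $\frac1n\sum_{i=1}^n\hat g_n(X_i)^2$ untouched. Everything else is the strong law of large numbers, the assumed Glivenko--Cantelli property, and the elementary geometry of projection onto a closed convex cone.
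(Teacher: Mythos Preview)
Your proof is correct. It is close in spirit to the paper's argument---both use the Boundedness assumption to confine $g$ and $\hat g_n$ to $\cone^B$, the Glivenko--Cantelli assumption to control empirical risks uniformly over $\cone^B$, and a geometric fact about projection onto closed convex cones---but the organization differs. The paper invokes the Pythagorean-type inequality
\[ \frac1n\sum_i (Y_i - g(X_i))^2 \ge \frac1n\sum_i (Y_i - \hat g_n(X_i))^2 + \frac1n\sum_i (\hat g_n(X_i) - g(X_i))^2, \]
shows the gap between the first two terms tends to zero (by LLN on the left and GC plus optimality of $g$ on the right), and hence obtains the stronger intermediate conclusion $\frac1n\sum_i(\hat g_n(X_i)-g(X_i))^2 \to 0$, from which the second-moment convergence follows. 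You instead use the orthogonality identity $\langle Y-\Pi Y,\Pi Y\rangle=0$ to rewrite both sides as $\|Y\|^2 - R$ and $\|Y\|_n^2 - R_n$, and then sandwich $R_n$ directly. Your route is slightly leaner (it avoids the Pythagorean inequality), while the paper's route delivers the additional information that $\hat g_n$ is close to $g$ in empirical $L^2$, not merely that their second moments agree in the limit.
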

\begin{proof}
By well known properties of projections onto closed convex cones (see e.g. Lemma 3 of \cite{IM1992}), we have that
\[ \frac{1}{n} \sum_{i=1}^n (Y_i - g(X_i))^2 \geq \frac{1}{n} \sum_{i=1}^n (Y_i - \hat{g}_n(X_i))^2 +  \frac{1}{n} \sum_{i=1}^n (\hat{g}_n(X_i) - g(X_i))^2. \]
By the law of large numbers, we have
\[ \frac{1}{n} \sum_{i=1}^n (Y_i - g(X_i))^2 \stackrel{a.s.}{\lra} \E (Y - g(X))^2. \]
By the boundedness assumption, we may assume that 
\[ \sup_{x \in \R^d} |g(x)|, ~\sup_{x \in \R^d} |\hat{g}_n(x)| \leq B. \]
Thus by the Glivenko-Cantelli assumption and the definition of $g$ as a minimizer, we have
\[ \limsup_{n \toinf}\bigg( \frac{1}{n} \sum_{i=1}^n (Y_i - g(X_i))^2  - \frac{1}{n} \sum_{i=1}^n (Y_i - \hat{g}_n(X_i))^2 \bigg) \stackrel{a.s.}{\leq} 0,\]
from which it follows that
\[ \frac{1}{n} \sum_{i=1}^n (\hat{g}_n(X_i) - g(X_i))^2 \stackrel{a.s.}{\lra} 0, \]
and thus
\[ \frac{1}{n} \sum_{i=1}^n \hat{g}_n(X_i)^2 \stackrel{a.s.}{\lra} \E g(X)^2, \]
as desired.
\end{proof}

\begin{proof}[Proof of Proposition \ref{prop:corr-convergence-property-p}]
First, observe that by part (iv) of Lemma 3 of \cite{IM1992}, and the fact that $\cone$ contains the constant functions, we have
\[ \E(Y - g(X)) \leq 0, ~~ \E (Y - g(X)) (-1) \leq 0, \]
which implies $\E g(X) = \E Y$. Similarly, in the empirical case, we have
\[ \frac{1}{n} \sum_{i=1}^n \hat{g}_n(X_i) = \frac{1}{n} \sum_{i=1}^n Y_i. \]
Thus by the law of large numbers, we obtain convergence of the sample mean of $\hat{g}_n$ to $\E g(X)$. 

Now onto the second moments. For $B > 0$, define the function
\[ \varphi_B(x) := \min(\max(x, -B), B). \]
Using the boundedness assumption of {\propertyp}, let $g^B, \hat{g}^B_n \in \cone^B$ be such that
\[ \E (\varphi_B(Y) - g^B(X))^2 = \inf_{h \in \cone_{\mu_X}} \E (Y - h(X))^2, \]
\[ \frac{1}{n} \sum_{i=1}^n (\varphi_B(Y_i) - \hat{g}^B_n(X_i))^2 = \inf_{h \in \cone_{\mu_n}} \frac{1}{n} \sum_{i=1}^n (\varphi_B(Y_i) - h(X_i))^2. \]
By well known properties of projection onto closed convex cones (see e.g. Lemma \cite{IM1992}), we have
\beq\label{eq:second-moment-inner-product-identity} \E Y g(X) = \E g(X)^2, ~~\frac{1}{n} \sum_{i=1}^n Y_i \hat{g}_n(X_i) = \frac{1}{n} \sum_{i=1}^n \hat{g}_n(X_i)^2. \eeq
Thus it suffices to show
\[ \frac{1}{n} \sum_{i=1}^n Y_i \hat{g}_n(X_i)\stackrel{a.s.}{\lra} \E Yg(X). \]
To start, observe for any $B > 0$,
\[ \begin{split}
\bigg|\E Y g(X) - \frac{1}{n} \sum_{i=1}^n Y_i \hat{g}_n(X_i)\bigg| \leq& ~|\E Yg(X) - \E \varphi_B(Y) g^B(X)| ~+ \\
&\bigg| \E \varphi_B(Y) g^B(X) - \frac{1}{n} \sum_{i=1}^n \varphi_B(Y_i) \hat{g}_n^B(X_i) \bigg| ~+\\
& \bigg| \frac{1}{n} \sum_{i=1}^n \varphi_B(Y_i) \hat{g}_n^B(X_i) - \frac{1}{n} \sum_{i=1}^n Y_i \hat{g}_n(X_i)\bigg| .
\end{split}\]
By Lemma \ref{lemma:bounded-case-property-p}, and an observation analogous to \eqref{eq:second-moment-inner-product-identity}, the middle term in the right hand side above converges a.s. to 0. Thus it suffices to show that there is some function $\delta : \R_{\geq 0} \ra \R_{\geq 0}$, such that $\lim_{B \toinf} \delta(B) = 0$, and for all $B > 0$,
\[ |\E Yg(X) - \E \varphi_B(Y) g^B(X)| \leq \delta(B), \]
\[ \limsup_{n \toinf}\bigg| \frac{1}{n} \sum_{i=1}^n \varphi_B(Y_i) \hat{g}_n^B(X_i) - \frac{1}{n} \sum_{i=1}^n Y_i \hat{g}_n(X_i)\bigg| \stackrel{a.s.}{\leq} \delta(B). \]
To see this, observe
\[ \begin{split}
|\E Y g(X) - \E \varphi_B(Y) g^B(X)| \leq |\E (Y - &\varphi_B(Y)) g(X)| ~+  \\
&|\E \varphi_B(Y) (g(X) - g^B(X))| .
\end{split}\]
By Cauchy-Schwarz and the fact that projections are contractions, we have that the right hand side above may be bounded by
\[ (\E Y^2 \ind(|Y| > B))^{1/2} (\E Y^2)^{1/2} + (\E Y^2)^{1/2} (\E (g(X) - g^B(X))^2)^{1/2} .\]
Now since the projection map is 1-Lipschitz (see e.g. part (vi) of Lemma 3 of \cite{IM1992}), we have
\[(\E (g(X) - g^B(X))^2)^{1/2} \leq (\E (Y - \varphi_B(Y))^2)^{1/2} \leq (\E Y^2 \ind(|Y| > B))^{1/2}.\]
Combining the previous few displays, we thus obtain
\[ |\E Y g(X) - \E \varphi_B(Y) g^B(X)| \leq 2 (\E Y^2 \ind(|Y| > B))^{1/2} (\E Y^2)^{1/2}. \]
Applying the same argument to the sample quantities, we may similarly obtain
\[ \begin{split}
\bigg| \frac{1}{n} \sum_{i=1}^n \varphi_B(Y_i) \hat{g}_n^B(X_i) - &\frac{1}{n} \sum_{i=1}^n Y_i \hat{g}_n(X_i)\bigg| \leq  \\
&2 \bigg(\frac{1}{n} \sum_{i=1}^n Y_i^2 \ind(|Y_i| > B)\bigg)^{1/2} \bigg(\frac{1}{n} \sum_{i=1}^n Y_i^2\bigg)^{1/2}.
\end{split}\]
We thus see that the function
\[ \delta(B) := 2 (\E Y^2 \ind(|Y| > B))^{1/2} (\E Y^2)^{1/2}\]
has the desired properties, and thus the desired result now follows.
\end{proof}

\begin{remark}\label{remark:estimate-conditional-variance}
This simple approach does not work when $\cone$ is the set of all measurable functions (so that $\cone_{\mu_X} = L^2(\mu_X)$), unless $X$ is discrete, since then the empirical projection $\hat{g}_n$ will perfectly match the $Y$ samples, so that $\hat{\chacorr}_\cone$ will always be 1. Chatterjee's approach is essentially to use $\mathrm{Var}(Y ~|~ X) = \E[(Y - Y')^2 ~|~ X] / 2$, where given $X$, the pair $(Y, Y')$ is i.i.d. Empirically, no such pair of $Y$'s is available, but if $X = X_{(i)}$ the $i$th order statistic, he essentially approximates by using $(Y_{(i)}, Y_{(i+1)})$. However, since the second identity in \eqref{eq:chacorr-def} no longer holds, this seems fruitless for $\cone \neq L^2$.
\end{remark}

\section{The isotonic case}

The one dimensional isotonic case, where $\moncone$ is the set of monotone nondecreasing functions $\R \ra \ovl{\R}$, is special in a number of ways as noted by many authors. For one, by a small trick, we can define the correlation not just for $Y$ with finite second moment, but actually for general $Y$, and the empirical version of this correlation will also satisfy property E). Secondly, it is the only case so far where we are able to verify {\propertyp} and thus show convergence of $\hat{\chacorr}_\moncone$, and also establish the behavior of the statistic under independence. However, we mention here as a side note that it could be possible that if we choose $\cone$ to be a small convex subset of a cone, not necessarily closed (for instance, $\cone = \{g \in L^2(\mu) : g \text{ is convex and 1-Lipschitz}\}$), and assume $\E[Y ~|~ X] \in \cone$, then we may obtain {\propertyp} from known results \cite{GS2018}. In addition, we shall provide an alternative based on Spearman's correlation which is simpler to analyze and also has properties A)-E) (where property C) is suitably modified). 
We first verify that $\moncone$ indeed satisfies {\propertyp}.

\begin{lemma}
The cone $\mc{M}$ satisfies {\propertyp}.
\end{lemma}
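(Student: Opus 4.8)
The plan is to check, one at a time, the four requirements in the definition of \propertyp{} for $\moncone$: (i) that $\moncone$ contains the constant functions; (ii) that $\moncone_\mu$ is closed in $L^2(\mu)$ for every probability measure $\mu$ on $\R$; (iii) the Boundedness condition; and (iv) the Glivenko--Cantelli condition. Item (i) is immediate. For (ii), I would take a sequence $h_n \in \moncone_\mu$ with $h_n \to h$ in $L^2(\mu)$, pass to a subsequence converging $\mu$-a.e., observe that a pointwise a.e.\ limit of nondecreasing functions is nondecreasing on a set $A$ with $\mu(A)=1$, and then \emph{extend} $h|_A$ to an honestly nondecreasing $\ovl{\R}$-valued function on all of $\R$, e.g.\ $\tilde h(x) := \sup\{h(a) : a \in A,\ a \le x\}$ (with $\sup\emptyset := -\infty$); one checks $\tilde h \in \moncone$ and $\tilde h = h$ in $L^2(\mu)$, so $h \in \moncone_\mu$. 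This is precisely where allowing the value $\pm\infty$ is needed.

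For (iii), suppose $|Y| \le B$ a.s.\ and let $g$ be the $L^2(\mu_X)$-projection of $Y$ onto $\moncone_{\mu_X}$. Composing with $\vphi_B(x) := \min(\max(x,-B),B)$ preserves monotonicity, so $\vphi_B \circ g \in \moncone^B$; and since $Y$ takes values in $[-B,B]$ a.s., the contraction property of truncation gives $(Y - \vphi_B(g(X)))^2 \le (Y - g(X))^2$ a.s., hence $\inf_{h \in \moncone^B}\E(Y-h(X))^2 \le \E(Y-\vphi_B(g(X)))^2 \le \E(Y-g(X))^2 = \inf_{h \in \moncone_{\mu_X}}\E(Y-h(X))^2$. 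The reverse inequality is trivial since $\moncone^B \subseteq \moncone_{\mu_X}$, so we get equality and may assume $\sup_x|g(x)| \le B$.

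The real work, and the main obstacle, is (iv): a uniform law of large numbers over the infinite-dimensional class $\moncone^B$ for the loss $(x,y) \mapsto (y - h(x))^2$. The key input is the classical fact that for every $\varepsilon > 0$ and every probability measure $Q$, the class of nondecreasing functions $\R \to [-B,B]$ has finite $L^1(Q)$-bracketing numbers (with $\log N_{[\,]}(\varepsilon,\moncone^B,L^1(Q)) = O(B/\varepsilon)$; see e.g.\ \cite{GS2018} and references therein) -- such brackets are produced by cutting $\R$ into $O(B/\varepsilon)$ intervals of $Q$-mass $O(\varepsilon/B)$, replacing a monotone $h$ on each interval by the constants equal to its endpoint values, and discretizing those values to a grid. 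I would then push this forward to the loss class: given a bracket $\underline h \le h \le \overline h$ in $\moncone^B$ with $\|\overline h - \underline h\|_{L^1(\mu_X)} \le \varepsilon$, the functions $u(x,y):=\max\big((y-\underline h(x))^2,(y-\overline h(x))^2\big)$ and $\ell(x,y):=\big(y-\mathrm{med}(\underline h(x),y,\overline h(x))\big)^2$ form a bracket for $(x,y)\mapsto(y-h(x))^2$ over all $h$ in that bracket, with $L^1(P)$-width at most $\E\big[|2Y - \underline h(X)-\overline h(X)|\,|\overline h(X)-\underline h(X)|\big] \le 4B\,\varepsilon$, using $|Y|\le B$. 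Hence the loss class has finite $L^1(P)$-bracketing numbers for every $\varepsilon$, and the bracketing Glivenko--Cantelli theorem yields exactly the convergence required in \propertyp{}. A more elementary route, if one prefers to avoid the entropy bound, is to expand $(y-h(x))^2 = y^2 - 2yh(x) + h(x)^2$, dispose of $\tfrac1n\sum Y_i^2$ by the ordinary law of large numbers, and prove the uniform convergence of $\tfrac1n\sum Y_i h(X_i)$ and $\tfrac1n\sum h(X_i)^2$ over $\moncone^B$ by the same bracketing construction applied to the simpler classes $\{y\,h(x)\}$ and $\{h(x)^2\}$; either way the crux is the bracketing bound for bounded monotone functions.
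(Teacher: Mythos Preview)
Your proof is correct and matches the paper's argument closely: closedness via an a.e.-convergent subsequence (the paper simply takes $\tilde g := \limsup_k g_{n_k}$, which is automatically nondecreasing everywhere, in place of your extension construction), boundedness via truncation by $\varphi_B$, and Glivenko--Cantelli via the finite bracketing numbers of $\moncone^B$. The paper in fact follows your second, ``more elementary'' route---expanding $(y-h(x))^2$ and treating $h(X)$, $h(X)^2$, and $Yh(X)$ separately (splitting $Y=Y_+-Y_-$ for the cross term)---rather than bracketing the loss class directly. One small slip in your direct approach: the pointwise bound $u-\ell\le |2Y-\underline h-\overline h|\,|\overline h-\underline h|$ fails when $\underline h(X)\le Y\le\overline h(X)$ (there $\ell=0$ while the right-hand side can vanish); the easy fix is $u-\ell\le 4B\,(\overline h-\underline h)$ in all cases, which still yields $\E[u-\ell]\le 4B\varepsilon$.
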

\begin{proof}
Clearly $\moncone$ contains the constant functions. Let $\mu$ be a probability measure on $\R$. To see why $\moncone_\mu = \{ h : h \in L^2(\mu)\}$ is closed in $L^2(\mu)$, suppose we have a sequence $\{g_n\}_{n \geq 1}$, such that $g_n \ra g \in L^2(\mu)$. We may then extract a subsequence $g_{n_k}$ which converges to $g$ $\mu$-a.s. Thus if we define $\tilde{g} := \limsup_k g_{n_k}$, we have that $\tilde{g}$ is nondecreasing, and also $\tilde{g} = g$ $\mu$-a.e. Note here is why we need to work with $\ovl{\R}$-valued functions, since even if $g_{n_k}$ is $\R$-valued for all $k$, it could be that $\tilde{g}$ is $\ovl{\R}$-valued.

The boundedness property is clearly satisfied by $\moncone$. Finally, to show the Glivenko-Cantelli property, fix $B > 0$ and observe by e.g. \cite[Example 19.11]{VDV1998} combined with \cite[Theorem 19.4]{VDV1998}, we have that both
\[ \sup_{h \in \moncone^B} \bigg|\frac{1}{n}\sum_{i=1}^n h(X_i) - \E h(X_i)\bigg|, \sup_{h \in \moncone^B} \bigg|\frac{1}{n}\sum_{i=1}^n h(X_i)^2 - \E h(X_i)^2\bigg|  \stackrel{a.s.}{\lra} 0. \]
Given $|Y| \leq B$, we will thus be done if we can show
\beq\label{eq:cross-term-conv} \sup_{h \in \moncone^B} \bigg| \frac{1}{n} \sum_{i=1}^n Y_i h(X_i) - \E Y h(X)\bigg| \stackrel{a.s.}{\lra} 0.\eeq
Towards this end, note
\[ \frac{1}{n}\sum_{i=1}^n Y_i h(X_i) = \frac{1}{n} \sum_{i=1}^n (Y_i)_+ h(X_i) - \frac{1}{n} \sum_{i=1}^n (Y_i)_{-} h(X_i) ,\]
where $(Y_i)_+ = \max(Y_i, 0)$ and $(Y_i)_{-} = -\min(Y_i, 0)$. A similar identity holds for the population version, and thus by the triangle inequality, it suffices to show \eqref{eq:cross-term-conv} under the further assumption $0 \leq Y \leq B$. But now that $Y$ is non-negative, we may apply a standard bracketing argument (see e.g. the proof of \cite[Theorem 2.4.1]{VDVW1996}), using the fact that the bracketing numbers for $\moncone^B$ are finite (see e.g. \cite[Theorem 2.7.5]{VDVW1996}).
\end{proof}

\subsection{The case of general $Y$}\label{section:monotone-general-y}

To define a correlation for general $Y$, the key observation is the following. Let $G$ be the cdf of $Y$. Define $G^{-1} : \R \ra \R$ by
\[ G^{-1}(x) := \inf \{t : G(t) \geq x\}. \]

\begin{lemma}\label{lemma:y-equals-g-inverse-g}
We have $Y \stackrel{a.s.}{=} G^{-1}(G(Y))$. 
\end{lemma}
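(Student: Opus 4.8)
The plan is to prove the identity by sandwiching: the inequality $G^{-1}(G(Y)) \leq Y$ will hold pointwise --- i.e.\ for every real number in place of $Y$ --- while $G^{-1}(G(Y)) \geq Y$ will hold only off a set of $Y$-probability zero. The first inequality is immediate from the definition of $G^{-1}$: for any $y \in \R$ the point $y$ lies in $\{t : G(t) \geq G(y)\}$, so the infimum defining $G^{-1}(G(y))$ is at most $y$.

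For the reverse inequality I would first record the standard fact, a consequence of $G$ being nondecreasing and right-continuous, that for every $u$ the set $\{t : G(t) \geq u\}$ equals the closed half-line $[G^{-1}(u), \infty)$: if $t_k \downarrow G^{-1}(u)$ with $G(t_k) \geq u$, then right-continuity gives $G(G^{-1}(u)) = \lim_k G(t_k) \geq u$, so the infimum is attained. Applying this with $u = G(y)$ shows that whenever $G^{-1}(G(y)) < y$, the point $t := G^{-1}(G(y))$ satisfies $t < y$ and $G(t) \geq G(y)$; since also $G(t) \leq G(y)$ by monotonicity, $G$ is identically equal to $G(y)$ on the entire interval $[t, y]$.

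It then remains to show that the ``bad set'' $N := \{y \in \R : G^{-1}(G(y)) < y\}$ has $\p(Y \in N) = 0$, and this I would do by a countable covering indexed by the rationals. For each $y \in N$, pick $t < y$ as above (with $G$ constant, $= G(y)$, on $[t,y]$) and then a rational $q \in [t, y)$; this $q$ satisfies $q < y$ and $G(q) = G(y)$, so $y$ belongs to $N_q := \{s > q : G(s) = G(q)\}$. Hence $N \subseteq \bigcup_{q \in \Q} N_q$. For a fixed rational $q$ let $D_q := \sup N_q$; then $G$ is constant ($= G(q)$) on $(q, D_q)$, so
\[ \p(q < Y < D_q) = \lim_{s \uparrow D_q}\big(G(s) - G(q)\big) = 0, \]
and the single point $D_q$ adds nothing either: either $G(D_q) = G(q)$, and then $\p(Y = D_q) = G(D_q) - \lim_{s \uparrow D_q} G(s) = 0$, or $G(D_q) > G(q)$, and then $D_q \notin N_q$. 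Thus $\p(Y \in N_q) = 0$ for each $q$, and a union bound over the countably many rationals gives $\p(Y \in N) = 0$, so $G^{-1}(G(Y)) = Y$ a.s.

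The only real subtlety --- what I would regard as the main obstacle --- is the bookkeeping at the right endpoints $D_q$ of the flat stretches of $G$: whether the supremum $D_q$ is attained as a member of $N_q$, and whether $G$ jumps there. This is exactly what forces the small case split in the last step and the appeal to right-continuity. Everything else reduces to monotone convergence of probabilities and a countable union bound.
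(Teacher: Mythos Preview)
Your proof is correct and follows essentially the same strategy as the paper: both reduce to showing that the bad set $\{y : G^{-1}(G(y)) < y\}$ is covered by countably many flat stretches of $G$, each of which carries zero $Y$-mass, with the same case split at the right endpoint of a flat stretch. The only difference is how countability is obtained --- you index each flat stretch by a rational lying inside it, whereas the paper shows that the maximal flat intervals $(a_x, b_x)$ are pairwise disjoint or identical --- but this is a minor bookkeeping variation within the same overall argument.
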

\begin{remark}
Observe that both $G, G^{-1}$ are nondecreasing. Thus this lemma shows that $Y$ is a nondecreasing function of $X$ if and only if $G(Y)$ is a nondecreasing function of $X$. This will mean that Property C) holds for the correlation we will soon define.
\end{remark}
\begin{proof}
Observe by definition that for all $x \in \R$, we have $x \geq G^{-1}(G(x))$. Let $\mc{C} := \{x \in \R : x > G^{-1}(G(x))\}$. 
It suffices to show that $\p(Y \in \mc{C}) = 0$. Given $x \in \R$, define $a_x := G^{-1}(G(x))$, and $b_x := \sup \{x' : G(x') = G(x)\}$. If $x \in \mc{C}$, then $a_x < x \leq b_x$. I now claim that for any $x, x' \in \mc{C}$, the intervals $(a_x, b_x)$, $(a_{x'}, b_{x'})$ are either disjoint or the same. To see this, first note if $a_x = a_{x'}$, then $G^{-1}(G(x)) = G^{-1}(G(x'))$. As $G^{-1}(G(x)) \leq x$, we have $G(G^{-1}(G(x))) \leq G(x)$. Moreover, we have
\[ G(G^{-1}(G(x))) = G(G^{-1}(G(x'))) \geq G(x'). \]
The same argument with $x, x'$ switched then gives $G(x) = G(x')$, and thus we see that $b_x = b_{x'}$.

Now suppose $a_x < a_{x'}$. We now show that $b_x \leq a_{x'}$. Suppose $\tilde{x}$ is such that $G(\tilde{x}) = G(x)$. Then by assumption, $G^{-1}(G(\tilde{x})) < G^{-1}(G(x'))$. This implies $G(\tilde{x}) < G(x')$, which implies $\tilde{x} < G^{-1}(G(x'))$. Taking supremum over all such $\tilde{x}$, we obtain the desired inequality.

We thus have that $\{(a_x, b_x) : x \in \mc{C}\}$ is a countable collection of intervals, and thus also $\mc{D} := \{x \in \mc{C} : x = b_x\}$ is countable. Note also that for any $x \in \mc{C}$, we have $\p(Y \in (a_x, x]) = 0$, which implies that $\p(Y \in (a_x, b_x)) = 0$, and if additionally $x = b_x$, then $\p(Y = x) = 0$. As
\[ \mc{C} \sse \mc{D} \cup  \bigcup_{x \in \mc{C}} (a_x, b_x) , \]
we may thus conclude by a union bound that $\p(Y \in \mc{C}) = 0$, as desired.
\end{proof}

Let us now define a population correlation $\moncorr$ as follows:
\[ \moncorr(X, Y) := \chacorr_\moncone(X, G(Y)). \]
When defining the empirical correlation, there is this problem that we may not know $G$. Instead, we can plug in the estimate $\hat{G}_n$, which is the empirical cdf of the $Y$ samples. Thus, define the empirical correlation as follows:
\[ \hat{\chacorr}_{mon}(X, Y) = \hat{\chacorr}_{mon, n}(X, Y) := \hat{\chacorr}_{\moncone, n}(X, \hat{G}_n(Y)).\]
In other words, we are simply replacing $Y_i$ by its rank $\hat{G}_n(Y_i)$, for each $1 \leq i \leq n$.

In practice, the empirical correlation may be computed as follows. First, sort $X_{(1)} \leq \cdots \leq X_{(n)}$. For each $1 \leq i \leq n$, let $Y_{(i)}$ be the $Y$ sample corresponding to $X_{(i)}$. Let $\hat{z}_1 \leq \cdots \leq \hat{z}_n$ be the solution to the isotonic regression
\[ \inf_{z_1 \leq \cdots \leq z_n} \frac{1}{n} \sum_{i=1}^n(\hat{G}_n(Y_{(i)}) - z_i)^2, \]
where there is the restriction that if $X_{(i)} = X_{(i+1)}$, then $z_i = z_{i+1}$. Then
\[ \hat{\chacorr}_{mon}(X, Y) = \frac{\frac{1}{n} \sum_{i=1}^n \hat{z}_i^2 - \bigg(\frac{1}{n} \sum_{i=1}^n \hat{G}_n(Y_i)\bigg)^2}{\mathrm{Var}_n(\hat{G}_n(Y))}.\]
Here we have also used the fact that
\[ \frac{1}{n}\sum_{i=1}^n \hat{z}_i = \frac{1}{n} \sum_{i=1}^n \hat{G}_n(Y_i). \]
Sorting the $X$ sample takes time $O(n \log n)$, the isotonic regression can be done in time $O(n)$ by the pool adjacent violators algorithm, and all other computations take time $O(n)$. Thus the empirical correlation may be calculated in time $O(n \log n)$.

\begin{prop}\label{prop:moncorr-conv}
For any $(X, Y)$ where $Y$ is a.s. not a constant, we have that
\[ \hat{\chacorr}_{mon}(X, Y) \stackrel{a.s.}{\lra} \moncorr(X, Y).\]
\end{prop}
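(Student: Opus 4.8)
The plan is to reduce to Proposition \ref{prop:corr-convergence-property-p} applied to the pair $(X, G(Y))$, and then to show that replacing the true cdf $G$ by the empirical cdf $\hat{G}_n$ inside the isotonic regression perturbs the statistic negligibly. For a data vector $(Z_1, \dots, Z_n)$, write $\hat{g}_n^Z$ for the corresponding empirical isotonic regression onto the points $(X_1, \dots, X_n)$, respecting the constraint $z_i = z_{i+1}$ when $X_{(i)} = X_{(i+1)}$, so that $\hat{\chacorr}_{mon}(X, Y) = \varemp{\hat{g}_n^{\hat{G}_n(Y)}(X)}\,/\,\varemp{\hat{G}_n(Y)}$, whereas $\hat{\chacorr}_{\moncone, n}(X, G(Y)) = \varemp{\hat{g}_n^{G(Y)}(X)}\,/\,\varemp{G(Y)}$.

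First I would verify that Proposition \ref{prop:corr-convergence-property-p} applies to $(X, G(Y))$. The variable $G(Y)$ lies in $[0,1]$, hence has a finite second moment, and it is a.s. not constant whenever $Y$ is: if $\p(Y = a) > 0$ and $\p(Y = b) > 0$ for some $a < b$, then $G(b) \geq G(a) + \p(Y = b) > G(a)$, so $G(Y)$ takes at least two values with positive probability, and in particular $\Var{G(Y)} > 0$. Since $\moncone$ satisfies {\propertyp} (as shown just above), Proposition \ref{prop:corr-convergence-property-p} yields $\hat{\chacorr}_{\moncone, n}(X, G(Y)) \stackrel{a.s.}{\lra} \chacorr_\moncone(X, G(Y)) = \moncorr(X, Y)$; moreover, inspecting its proof, $\varemp{G(Y)} \stackrel{a.s.}{\lra} \Var{G(Y)} > 0$ by the law of large numbers, and $\varemp{\hat{g}_n^{G(Y)}(X)} \stackrel{a.s.}{\lra} \Var{\Pi_{\moncone_X}(G(Y))}$ by Lemma \ref{lemma:bounded-case-property-p} together with the mean-matching identity.

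It then remains to handle the replacement of $G$ by $\hat{G}_n$, which is the main new ingredient. By the classical Glivenko--Cantelli theorem, $\|\hat{G}_n - G\|_\infty \stackrel{a.s.}{\lra} 0$. For the denominators this gives at once $\varemp{\hat{G}_n(Y)} - \varemp{G(Y)} \stackrel{a.s.}{\lra} 0$, since the first and second sample moments of $\hat{G}_n(Y_i)$ and $G(Y_i)$ differ by $O(\|\hat{G}_n - G\|_\infty)$; thus $\varemp{\hat{G}_n(Y)} \stackrel{a.s.}{\lra} \Var{G(Y)} > 0$. For the numerators, the point is that the empirical isotonic regression is the Euclidean projection in $\R^n$ onto the closed convex cone of vectors that are nondecreasing along the $X$-ordering and constant on ties, and projection onto a closed convex set is $1$-Lipschitz, so
\[ \frac1n \sum_{i=1}^n \big(\hat{g}_n^{\hat{G}_n(Y)}(X_i) - \hat{g}_n^{G(Y)}(X_i)\big)^2 \;\leq\; \frac1n \sum_{i=1}^n \big(\hat{G}_n(Y_i) - G(Y_i)\big)^2 \;\leq\; \|\hat{G}_n - G\|_\infty^2 \;\stackrel{a.s.}{\lra}\; 0. \]
Using the mean-matching identity $\frac1n\sum_i \hat{g}_n^Z(X_i) = \frac1n\sum_i Z_i$, and the bound $\frac1n\sum_i \hat{g}_n^Z(X_i)^2 = \frac1n\sum_i Z_i \hat{g}_n^Z(X_i) \leq \frac1n\sum_i Z_i^2 \leq 1$ coming from \eqref{eq:second-moment-inner-product-identity} and Cauchy--Schwarz, for both $Z = G(Y)$ and $Z = \hat{G}_n(Y)$, a short Cauchy--Schwarz computation converts the displayed $\ell^2$-bound into convergence of the empirical means and second moments of the two isotonic fits to a common limit, hence $\varemp{\hat{g}_n^{\hat{G}_n(Y)}(X)} - \varemp{\hat{g}_n^{G(Y)}(X)} \stackrel{a.s.}{\lra} 0$. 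Combining the numerator and denominator statements gives $\hat{\chacorr}_{mon}(X, Y) \stackrel{a.s.}{\lra} \Var{\Pi_{\moncone_X}(G(Y))}/\Var{G(Y)} = \chacorr_\moncone(X, G(Y)) = \moncorr(X, Y)$, as claimed.

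The step I expect to be the main obstacle is precisely this last passage from $G$ to $\hat{G}_n$: one must justify carefully that the tie-constrained isotonic regression really is a projection onto a closed convex cone, so that the $1$-Lipschitz contraction bound is legitimate, and keep track of the elementary inequalities that upgrade $\ell^2$-closeness of the two isotonic fits to closeness of their empirical variances. Once that is in place, the remainder is a direct appeal to Proposition \ref{prop:corr-convergence-property-p} and the classical Glivenko--Cantelli theorem.
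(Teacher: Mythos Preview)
Your proposal is correct and takes a somewhat different route from the paper's own proof. The paper does not apply Proposition~\ref{prop:corr-convergence-property-p} as a black box to $(X,G(Y))$ and then perturb; instead it works at the level of the empirical risk, showing directly that
\[
\sup_{h\in\moncone^1}\Big|\tfrac1n\sum_i(\hat G_n(Y_i)-h(X_i))^2-\E(G(Y)-h(X))^2\Big|\stackrel{a.s.}{\lra}0
\]
by combining the Glivenko--Cantelli part of {\propertyp} with the classical Glivenko--Cantelli theorem for $\hat G_n$, and then re-enters the argument of Lemma~\ref{lemma:bounded-case-property-p} and Proposition~\ref{prop:corr-convergence-property-p}. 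Your decomposition is more modular: you invoke Proposition~\ref{prop:corr-convergence-property-p} once for $(X,G(Y))$ and then handle the passage $G\to\hat G_n$ entirely at the level of the fitted values, via the $1$-Lipschitz property of projection onto the tie-constrained monotone cone in $\R^n$. That buys you a cleaner separation of concerns and avoids re-running the uniform-risk argument; the paper's route, on the other hand, yields the slightly stronger intermediate statement that the empirical risk with plugged-in $\hat G_n$ is uniformly close to the population risk over $\moncone^1$.

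One minor point: your verification that $G(Y)$ is a.s.\ not constant only treats the case where $Y$ has two atoms. The cleanest fix is to invoke Lemma~\ref{lemma:y-equals-g-inverse-g}: if $G(Y)=c$ a.s., then $Y=G^{-1}(G(Y))=G^{-1}(c)$ a.s., contradicting the hypothesis on $Y$.
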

\begin{proof}
Note since $\moncone$ satisfies {\propertyp} and $0 \leq \hat{G}_n, G \leq 1$, by the boundedness assumption it suffices to just optimize over $\moncone^1$. We have by the Glivenko-Cantelli assumption that
\[ \sup_{h \in \moncone^1} \bigg| \frac{1}{n} \sum_{i=1}^n (G(Y_i) - h(X_i))^2 - \E(G(Y) - h(X))^2\bigg| \stackrel{a.s.}{\lra} 0. \]
Now for any $h \in \moncone^1$, we have
\[ \bigg| \frac{1}{n} \sum_{i=1}^n (G(Y_i) - h(X_i))^2 - \frac{1}{n} \sum_{i=1}^n (\hat{G}_n(Y_i) - h(X_i))^2 \bigg| \leq 4 \sup_{x \in \R} |\hat{G}_n(x) - G(x)|. \]
By the Glivenko-Cantelli theorem, we have
\[ \sup_{x \in \R}  |\hat{G}_n(x) - G(x)|\stackrel{a.s.}{\lra} 0 , \]
and thus we obtain
\[ \sup_{h \in \moncone^1} \bigg| \frac{1}{n} \sum_{i=1}^n (\hat{G}_n(Y_i) - h(X_i))^2 - \E(G(Y) - h(X))^2\bigg| \stackrel{a.s.}{\lra} 0. \]
The rest of the proof now proceeds as in the proofs of Lemma \ref{lemma:bounded-case-property-p} and Proposition \ref{prop:corr-convergence-property-p}.
\end{proof}

Similar to \eqref{eq:corr-def}, we now define the averaged (population) correlation 
\[ \tilde{\chacorr}_{mon}(X, Y) := \frac{1}{2} (\chacorr(X, Y)+ \moncorr^{1/2}(X, Y)), \]
as well as the empirical version
\[ \hat{\tilde{\chacorr}}_{mon, n}(X, Y) := \frac{1}{2} (\hat{\chacorr}_n(X, Y) + \hat{\chacorr}^{1/2}_{mon}(X, Y)).\]
Here we use $\moncorr^{1/2}$ rather than $\moncorr$, because as we shall see, the asymptotic theory under independence is nicer. The population version satisfies properties A)-E) (with property C) suitably adjusted), and is defined for general (non-constant) $Y$. By Proposition \ref{prop:moncorr-conv}, we have that the empirical correlation converges a.s. to the population correlation. 

\subsection{Asymptotic behavior under independence and continuity}\label{section:moncorr-asymptotics}

We next investigate the asymptotic distribution of $\hat{\tilde{\chacorr}}_{mon}(X, Y)$ under the assumptions that $X, Y$ are independent and have continuous distributions. 

\begin{theorem}\label{thm:averaged-correlation-asymptotics}
Assume $X, Y$ are independent and continuously distributed. 
Then
\[ \sqrt{n} \bigg(\hat{\tilde{\chacorr}}_{mon}(X, Y) - \frac{1}{2}\sqrt{\frac{\log n}{n}}\bigg) \stackrel{d}{\lra} N(0, 23/80).\] 
\end{theorem}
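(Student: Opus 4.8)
The plan is to treat the two summands of $\hat{\tilde{\chacorr}}_{mon} = \tfrac12\big(\hat{\chacorr}_n + \hat{\chacorr}^{1/2}_{mon}\big)$ separately and then assemble a joint limit. For the first summand we invoke Chatterjee's central limit theorem \cite{CH2019}: under independence with continuous marginals, $\sqrt n\,\hat{\chacorr}_n \stackrel{d}{\lra} N(0, 2/5)$, so $\tfrac12\sqrt n\,\hat{\chacorr}_n \stackrel{d}{\lra} N(0, 1/10)$. All the new work is in the isotonic summand. The starting point is that, under independence and continuity, the normalized ranks $R_i := n\hat{G}_n(Y_{(i)})$, listed in the order of the sorted $X$-sample, form a uniformly random permutation $\pi$ of $\{1,\dots,n\}$; there are no ties a.s., so the tie-breaking constraint in the isotonic regression is vacuous. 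Setting $W_i := R_i/n - \tfrac{n+1}{2n}$ (the centred ranks) and letting $\hat W$ be the isotonic regression of $W$, translation equivariance and the projection identities already used in the proof of Proposition~\ref{prop:corr-convergence-property-p} give the clean identity
\[
\hat{\chacorr}_{mon} = \frac{A_n}{n\, s_n^2}, \qquad A_n := \sum_{i=1}^n \hat W_i^2, \qquad s_n^2 := \varemp{\hat{G}_n(Y)} = \frac{n^2-1}{12 n^2},
\]
in which $s_n^2$ is deterministic and equal to $1/12$ up to $O(n^{-2})$. Everything thus reduces to the large-$n$ behaviour of $A_n$, the squared $\ell^2$-norm of the isotonic fit to a centred uniform random permutation.

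The heart of the proof is a central limit theorem for $A_n$. The fit $\hat W$ is read off the greatest convex minorant (GCM) of the partial-sum process $S_k = \sum_{i\le k}W_i$ (with $S_0 = S_n = 0$): on each linear face $f$ of the GCM the value $\hat W_i$ equals the slope of $f$, so $A_n = \sum_{f}|f|\,\mathrm{slope}(f)^2$. Now $S$ is the partial-sum bridge of a sequence of exchangeable increments with limiting increment variance $\sigma^2 = 1/12$, and the structure of the convex minorant of such a process is classical (of Sparre Andersen type, and closely related to the setting of Groeneboom and Pyke's work on convex minorants of empirical distribution functions): to leading order the multiset of face lengths behaves like the cycle-length multiset of a uniform permutation, so the number of faces $K_n$ satisfies $\E K_n \sim \log n$ and $\Var{K_n}\sim\log n$. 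The key scale-invariance is that \emph{every} face --- a face of length $\ell$ has slope of order $\ell^{-1/2}$ --- contributes $|f|\,\mathrm{slope}(f)^2 \approx \sigma^2\chi^2_1$ in distribution, so morally $A_n \approx \sigma^2 K_n + (\text{per-face }\chi^2_1\text{ fluctuations})$. Adding the per-face fluctuation ($\Var{\chi^2_1}=2$, over $\sim\log n$ nearly independent faces) to the count fluctuation $\Var{K_n}\sim\log n$ gives $\E A_n = \sigma^2\log n + O(1)$ and
\[
\frac{A_n - \sigma^2\log n}{\sqrt{\log n}} \stackrel{d}{\lra} N\!\big(0,\; 3\sigma^4\big) = N\!\big(0,\; 1/48\big).
\]
A delta-method computation, in which the choice $\sigma^2 = s_n^2$ makes the leading terms cancel, then yields
\[
\sqrt n\Big(\hat{\chacorr}^{1/2}_{mon} - \sqrt{\tfrac{\log n}{n}}\Big) = \frac{\sqrt{A_n}}{s_n} - \sqrt{\log n} \stackrel{d}{\lra} N\!\big(0,\; 3/4\big).
\]

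It remains to combine the two limits. Writing
\[
\sqrt n\Big(\hat{\tilde{\chacorr}}_{mon} - \tfrac12\sqrt{\tfrac{\log n}{n}}\Big) = \tfrac12\sqrt n\,\hat{\chacorr}_n + \tfrac12\Big(\sqrt n\,\hat{\chacorr}^{1/2}_{mon} - \sqrt{\log n}\Big),
\]
we need a \emph{joint} limit for the two terms, both of which are functions of the single random permutation $\pi$. If their asymptotic covariance vanishes the variances add, giving $\tfrac14\cdot\tfrac25 + \tfrac14\cdot\tfrac34 = \tfrac{23}{80}$, as stated. The heuristic for decoupling is that $\hat{\chacorr}_n$ is a ``bulk'' statistic --- a normalized sum $\sum_i|W_{i+1}-W_i|$ whose $O(n^{-1/2})$ fluctuation is spread uniformly over all coordinates --- whereas the centred fluctuation of $\hat{\chacorr}^{1/2}_{mon}$ is supported on the coarse GCM scaffold of $\Theta(\log n)$ faces, so that conditioning on the GCM leaves $\sum_i|W_{i+1}-W_i|$ essentially undetermined beyond low-order features. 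Rigorously this amounts to showing $\Var{\E[\sum_i|W_{i+1}-W_i|\mid\text{GCM}]} = o(n^3)$ together with a bivariate central limit theorem for the pair, the latter obtainable by an exchangeable-pairs or martingale argument for permutation statistics.

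The main obstacle is the central limit theorem for $A_n$: pinning down not only the leading constant $\sigma^2 = 1/12$ in $\E A_n$ but, more delicately, the variance constant $3\sigma^4$ and the asymptotic normality --- which requires controlling the dependence among the GCM faces of the random-permutation bridge, the near-$\chi^2_1$ character of each contribution, and the sum of $\Theta(\log n)$ such pieces. The exchangeable (Sparre Andersen) structure of the convex minorant and the behaviour near the two endpoints (where faces are short and slopes large) are what make this tractable. The secondary obstacle is the asymptotic independence of the Chatterjee term and the isotonic term, which is precisely what forces the value $23/80$ rather than a larger number.
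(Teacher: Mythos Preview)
Your outline is correct and follows essentially the same route as the paper: reduce to a uniform random permutation, read $\hat{\chacorr}_{mon}$ off the greatest convex minorant of the centred partial-sum walk, identify the face-length multiset with the cycle-length multiset of a random permutation so that the number of faces satisfies a $(\log n,\log n)$ CLT, observe that each face contributes an approximate $\chi^2_{(1)}$, add the two sources of fluctuation to get variance $3$ for $(n\hat{\chacorr}_{mon}-\log n)/\sqrt{\log n}$, pass to the square root by the delta method, and combine with $\sqrt n\,\hat{\chacorr}_n\to N(0,2/5)$ under asymptotic independence. Two points where the paper is more concrete than your sketch are worth noting. First, the GCM/cycle correspondence you call ``Sparre Andersen type'' is made exact via the Bohnenblust--Spitzer bijection on permutations: this is the device that turns the GCM faces of $S_\pi$ into the cycles of a uniformly random permutation $\rho$ and lets one write $\sum_i\hat w_i^2$ as $\sum_{C\in\rho}\big(|C|^{-1/2}\sum_{i\in C}x_i\big)^2$. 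Second, the asymptotic independence is \emph{not} obtained by conditioning on the GCM as you suggest; instead, after the bijection one represents $\rho$ by first drawing the ranked cycle lengths $L_1\ge\cdots\ge L_{N_n}$ and then filling each cycle with i.i.d.\ $\mathrm{Unif}(0,1)$ variables $V_j$. Truncating to cycles with lengths in $[(\log n)^{10},\,n/(\log n)^{10}]$, the isotonic term depends (up to $o_P$) only on the $V_j$ with $j>n-n/(\log n)^9$, while the Chatterjee term depends (up to $o_P$) only on the $V_j$ with $j\le n-n/(\log n)^9$; conditional on the cycle lengths the two pieces are therefore exactly independent, which gives the zero off-diagonal entry and the value $23/80$. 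Your variance-of-conditional-expectation proposal for independence may be workable, but it is a different argument and would need separate justification.
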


This theorem follows from the following proposition about the joint asymptotics of the two empirical correlations.

\begin{prop}\label{prop:correlation-joint-asymptotics}
Assume $X, Y$ are independent and continuously distributed. Then
\[ \bigg(\sqrt{n} \hat{\chacorr}_n(X, Y), \frac{n \hat{\chacorr}_{mon}(X, Y) - \log n}{\sqrt{\log n}}\bigg) \stackrel{d}{\lra} N\bigg(\begin{pmatrix} 0 \\ 0\end{pmatrix}, \begin{pmatrix} 2/5 & 0 \\ 0 & 3\end{pmatrix} \bigg). \]
\end{prop}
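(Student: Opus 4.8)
The plan is to express both statistics as functionals of one uniformly random permutation, treat the two marginal limits separately --- the first being off-the-shelf, the second governed by the greatest convex minorant of a random bridge --- and then obtain asymptotic independence by conditioning on the macroscopic block structure that controls $\hat{\chacorr}_{mon}$. For the reduction: since $X, Y$ are independent and continuously distributed, after sorting $X_{(1)} < \cdots < X_{(n)}$ the vector of $Y$-ranks $(r_1, \dots, r_n)$ is a uniform random permutation of $\{1, \dots, n\}$, and both statistics are explicit functions of it. Put $U_i := r_i/n = \hat{G}_n(Y_{(i)})$, so $\varemp{U} = (n^2-1)/(12n^2)$ is a deterministic sequence tending to $1/12$. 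By the cone-projection identities (parts of Lemma 3 of \cite{IM1992}) and $\tfrac1n\sum_i \hat z_i = \bar U$, one has $\hat{\chacorr}_{mon} = \varemp{\hat z}/\varemp{U}$ with $n\varemp{\hat z} = \sum_{i=1}^n(\hat z_i - \bar U)^2$, whence $n\hat{\chacorr}_{mon} = \tfrac{12 n^2}{n^2 - 1}\sum_i(\hat z_i - \bar U)^2$. As $\sqrt n\hat{\chacorr}_n \stackrel{d}{\lra} N(0, 2/5)$ is Chatterjee's null central limit theorem \cite{CH2019}, and $\tfrac{12 n^2}{n^2-1} = 12 + O(n^{-2})$ perturbs the second coordinate by $o_p(1/\sqrt{\log n})$, it remains to show that $\sqrt n\hat{\chacorr}_n$ and $\big(12\sum_i(\hat z_i - \bar U)^2 - \log n\big)/\sqrt{\log n}$ converge jointly to independent $N(0, 2/5)$ and $N(0, 3)$ random variables.

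\emph{The isotonic term.} The isotonic regression $\hat z$ of $(U_1, \dots, U_n)$ is the vector of slopes of the greatest convex minorant of the cumulative-sum path $\{(j, T_j)\}_{j=0}^n$, $T_j := \sum_{i \le j}(U_i - \bar U)$, a random bridge with exchangeable, zero-sum increments $\xi_i := U_i - \bar U$. If the faces of the minorant span the index blocks $B_1, \dots, B_M$, then $\sum_i(\hat z_i - \bar U)^2 = \sum_{j=1}^M |B_j|^{-1}\big(\sum_{i \in B_j}\xi_i\big)^2$. By the Sparre Andersen--type combinatorial description of the convex minorant of a walk or bridge with exchangeable increments --- the faces are encoded by the cycles of a uniform random permutation of $[n]$, and, conditionally on the cycle lengths, the increments are spread among the blocks by a uniform random partition --- the block-size multiset $\{|B_j|\}$ is distributed as the cycle-length multiset of a uniform permutation of $[n]$; in particular $M \stackrel{d}{=} \sum_{k=1}^n \mathrm{Ber}(1/k)$, so $\E M = H_n \sim \log n$ and $\Var{M} \sim \log n$. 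Set $\sigma^2 := \varemp{U}$. Conditionally on the blocks the $M$ summands are independent, the $j$-th having conditional mean $\sigma^2(1 + O(|B_j|/n))$ and conditional variance $2\sigma^4(1 + O(|B_j|/n)) + O(1/|B_j|)$, so the conditional mean equals $\sigma^2 M + O_p(1)$ and the conditional variance equals $2\sigma^4\log n + O_p(1)$ (the corrections summing to $O(1)$ because $\sum_j|B_j| = n$ and $\sum_j 1/|B_j| = O_p(1)$). Combining the classical CLT for $M$ (contributing limiting variance $\sigma^4\log n$) with a Lindeberg CLT for the conditionally centred sum normalized by $\sqrt{\log n}$ (which has an asymptotically non-random conditional law $N(0, 2\sigma^4)$, the conditional variance being $\approx 2\sigma^4 M$ with $M/\log n \to 1$), and noting these two pieces are asymptotically independent (the first block-measurable, the second asymptotically independent of the blocks), we get $\big(\sum_i(\hat z_i - \bar U)^2 - \sigma^2\log n\big)/\sqrt{\log n} \stackrel{d}{\lra} N(0, 3\sigma^4)$. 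Since $12\sigma^2 \to 1$ this is $\big(12\sum_i(\hat z_i - \bar U)^2 - \log n\big)/\sqrt{\log n} \stackrel{d}{\lra} N(0, 3)$, the $O(1)$ shifts in the centring ($H_n - \log n \to \gamma$, and the like) being absorbed after dividing by $\sqrt{\log n}$.

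\emph{Asymptotic independence, and the main difficulty.} It remains to show the limiting covariance is zero and the limit is jointly Gaussian. The heuristic is that $\hat{\chacorr}_{mon}$ is, to leading order, a function of the $O(\log n)$ faces of the convex minorant --- a macroscopic object --- while $\hat{\chacorr}_n = 1 - \tfrac{3}{n^2-1}\sum_i|r_{i+1} - r_i|$ is a $\sqrt n$-scale functional of the local increments of the permutation, and conditioning on those few faces barely constrains the latter. Concretely I would condition on the minorant's block partition: given it, $(r_1, \dots, r_n)$ is a uniform arrangement compatible with the blocks, and I would rerun the dependency-graph / martingale CLT behind \cite{CH2019} \emph{conditionally}, verifying that the conditional first two moments of $\sum_i|r_{i+1} - r_i|$ match the unconditional ones up to $o(\sqrt n)$ for almost every block realization, so that the conditional law of $\sqrt n\hat{\chacorr}_n$ converges to $N(0, 2/5)$ irrespective of the blocks. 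Since the leading term of $\hat{\chacorr}_{mon}$ is block-measurable and the conditional law of $\sqrt n\hat{\chacorr}_n$ is asymptotically deterministic, the pair is asymptotically independent and jointly normal. The main obstacle is the central limit theorem for $\hat{\chacorr}_{mon}$ --- making the convex-minorant / cyclic-permutation representation rigorous for the bridge, controlling the within-block dependence well enough for a Lindeberg argument, and bookkeeping the finite-$n$ corrections so the constants assemble exactly to $\log n$ (centring) and $3$ (variance) --- together with establishing the conditional version of Chatterjee's CLT that yields the asymptotic independence; the marginal CLT for $\hat{\chacorr}_n$ itself is quoted from \cite{CH2019}.
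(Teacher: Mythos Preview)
Your reduction to a uniform permutation and the convex-minorant/cycle correspondence (what the paper calls the Bohnenblust--Spitzer bijection) is exactly the right starting point, and your decomposition of the $\hat{\chacorr}_{mon}$ limit into a ``number of blocks'' piece (variance $\sigma^4$) and a ``conditional within-block'' piece (variance $2\sigma^4$) matches the paper's Lemma~\ref{lemma:gcm-slope-clt}. So the marginals are essentially right.

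The gap is in the asymptotic independence step. You write that ``the leading term of $\hat{\chacorr}_{mon}$ is block-measurable,'' but at the CLT scale this is false: the within-block sums $\sum_{i\in B_j}\xi_i$ contribute two-thirds of the limiting variance, and they are \emph{not} functions of the block partition alone. Conditioning only on the block sizes therefore does not make $\hat{\chacorr}_{mon}$ (approximately) measurable, and your alternative---rerunning Chatterjee's CLT conditionally---would have to condition on the block partition \emph{and} the block sums, which constrains which ranks land in which block and makes the conditional moment-matching genuinely delicate. The paper sidesteps this entirely with a decoupling trick you are missing: it restricts attention to cycles with lengths in $[(\log n)^{10},\,n/(\log n)^{10}]$ (the indices $A_n,B_n$), discarding an $o_P(\sqrt{\log n})$ error. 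The \emph{upper} bound ensures the retained cycles have total length at most $n/(\log n)^{9}$ with high probability; after replacing ranks by i.i.d.\ uniforms $V_1,\dots,V_n$, this means the $\hat{\chacorr}_{mon}$ term uses only the last $n/(\log n)^9$ of the $V_j$'s, while $\hat{\chacorr}_n$ is rewritten (Lemma~\ref{lemma:perm-osc-reduction}) using only the first $n-n/(\log n)^9$. On the high-probability event $E_n$ the two statistics are then functions of \emph{disjoint} subsets of an i.i.d.\ sequence, giving exact conditional independence given the cycle data $\mc{F}_n$. This truncation (and especially the role of the upper cycle-length bound) is the idea your proposal needs.
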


\begin{proof}[Proof of Theorem \ref{thm:averaged-correlation-asymptotics}]
Let
\[ Z_n := n \hat{\chacorr}_{mon}(X, Y). \]
By Taylor's remainder theorem, we have
\[ \sqrt{Z_n} = \sqrt{\log n} + \frac{1}{2 \sqrt{\log n}} (Z_n - \log n) - \frac{1}{8\xi_n^{3/2}} (Z_n - \log n)^2,\]
where $\xi_n$ is between $Z_n$ and $\log n$. By Proposition \ref{prop:correlation-joint-asymptotics}, we obtain
\[ \sqrt{Z_n} - \sqrt{\log n} = \frac{1}{2 \sqrt{\log n}}(Z_n - \log n) + o_P(1).  \]
We thus have
\[ \begin{split}
\sqrt{n}\bigg(\hat{\tilde{\chacorr}}_{mon}&(X, Y) - \frac{1}{2}\sqrt{\frac{\log n}{n}}\bigg) = \\
&\frac{1}{2}\sqrt{n} \hat{\chacorr}_n(X, Y) + \frac{1}{4} \frac{n \hat{\chacorr}_{mon}(X, Y) - \log n}{\sqrt{\log n}} + o_P(1)  .
\end{split}\]
We may now finish by Proposition \ref{prop:correlation-joint-asymptotics} and Slutsky's lemma.
\end{proof}

\begin{remark}
As we will see, the distribution of $n \hat{\chacorr}_{mon}(X, Y)$ may be approximately described as follows. First, sample $N_n$, which is distributed as the number of cycles in a uniform random permutation on $[n]$ (and which has an explicit distributional representation as a sum of independent Bernoullis; see e.g. \cite[Section 2]{STEELE2002}). Then generate a $\chi^2_{(N_n)}$. This explains the scaling in the central limit theorem, since it is known that $(N_n - \log n) / \sqrt{\log n} \stackrel{d}{\lra} N(0, 1)$ (see e.g. \cite[Section 2]{STEELE2002}), and $(\chi^2_n - n) / \sqrt{n} \stackrel{d}{\lra} N(0, 2)$. 
\end{remark}

We now begin towards the proof of Proposition \ref{prop:correlation-joint-asymptotics}. To start, sort $X_{(1)} < \cdots < X_{(n)}$ (the inequalities are strict since $X$ has continuous distribution), and let $Y_{(i)}$ be the $Y$ sample corresponding to $X_{(i)}$. Let $\hat{z}_1 \leq \cdots \leq \hat{z}_n$ be the isotonic regression of $\hat{G}_n(Y)$ on $X$, i.e. the solution to the minimization problem
\[ \inf_{z_1 \leq \cdots \leq z_n} \frac{1}{n} \sum_{i=1}^n (\hat{G}_n(Y_{(i)}) - z_i)^2. \]
The solution satisfies
\[ \frac{1}{n} \sum_{i=1}^n \hat{z}_i = \frac{1}{n} \sum_{i=1}^n \hat{G}_n(Y_i). \]
Since $Y$ is continuous, the right hand side above will deterministically be 
\[ \mu_n := \frac{1}{2} \bigg(1 + \frac{1}{n}\bigg),\]
and moreover
\[\sigma_n^2 := \frac{1}{12} \bigg(1 - \frac{1}{n^2}\bigg) =  \mathrm{Var}_n(\hat{G}_n(Y)). \]
We then have that
\[\hat{\chacorr}_{mon}(X, Y) = \frac{1}{\sigma_n^2} \frac{1}{n} \sum_{i=1}^n (\hat{z}_i - \mu_n)^2 = \frac{1}{n} \sum_{i=1}^n \bigg(\frac{\hat{z}_i - \mu_n}{\sigma_n}\bigg)^2.  \]
Note also that if $\tilde{z}_1, \ldots, \tilde{z}_n$ is the isotonic regression of $(\hat{G}_n(Y) - \mu_n) / \sigma_n$ on $X$, i.e. the solution to the minimization problem
\[ \inf_{z_1 \leq \cdots \leq z_n} \frac{1}{n} \sum_{i=1}^n \bigg(\frac{\hat{G}_n(Y_{(i)}) - \mu_n}{\sigma_n} - z_i\bigg)^2, \]
then we have that $(\hat{z}_i - \mu_n) / \sigma_n = \tilde{z}_i$ for all $1 \leq i \leq n$, and thus
\[ \hat{\chacorr}_{mon}(X, Y) = \frac{1}{n} \sum_{i=1}^n \tilde{z}_i^2.\]

Since $X, Y$ are independent and $Y$ is continuous, we have that the random vector $(n\hat{G}_n(Y_1), \ldots, n\hat{G}_n(Y_n))$ has the same distribution as $\pi$, a uniform random permutation on $[n] := \{1, \ldots, n\}$. We thus have that $(\tilde{z}_1, \ldots, \tilde{z}_n)$ has the same distribution as $(\hat{w}_1, \ldots, \hat{w}_n)$, where the latter is the isotonic regression of $(\pi / n - \mu_n) / \sigma$, i.e. solution to the minimization problem
\[ \inf_{w_1 \leq \cdots \leq w_n} \frac{1}{n} \sum_{i=1}^n \bigg(\frac{\pi(i) / n - \mu_n}{\sigma_n} - w_i\bigg)^2. \]
Recalling the definition \eqref{eq:chacorr-est-def} of $\hat{\chacorr}_n(X, Y)$, we further obtain that the random vector $(\hat{\chacorr}_n(X, Y), \hat{\chacorr}_{mon}(X, Y))$ has the same distribution as
\beq\label{eq:function-of-permutation} \Bigg(1 - \frac{3 \sum_{i=1}^{n-1} |\pi(i) - \pi(i+1)|}{n^2-1}, ~\frac{1}{n} \sum_{i=1}^n \hat{w}_i^2 \Bigg) . \eeq
We thus have reduced the problem to analyzing statistics of a uniform random permutation.

The key tool in our analysis will be the following bijection on permutations, which we now begin to describe, following \cite[Section 3]{STEELE2002}. We start by fixing some real numbers $y_1, \ldots, y_n$ which are linearly independent over $\Z$; i.e., if $a_1, \ldots, a_n \in \Z$ are such that $a_1y_1 + \cdots + a_n y_n= 0$, then $a_1 = \cdots = a_n = 0$. Given a permutation $\tau = (\tau(1), \ldots, \tau(n))$, define the cumulative sum process $S_\tau : [0, n] \ra \R$ by linearly interpolating between the points $S_\tau(0) := 0$, $S_\tau(i) := y_{\tau(1)} + \cdots + y_{\tau(i)}$ for $1 \leq i \leq n$. Let $\convmin_\tau$ be the greatest convex minorant of $S_\tau$ (technically \cite[Section 3]{STEELE2002} considers the least concave majorant, but by a sign change we see that everything in the section also applies to the greatest convex minorant). Note $\convmin_\tau$ will be a piecewise linear function, and so let $0 = i_0 < i_1 < \cdots < i_m = n$ denote the knots of $\convmin_\tau$. Now define the permutation $\tilde{\tau}$ as the product of cycles
\beq\label{eq:bohnenblust-spitzer-def} \tilde{\tau} := (\tau_{i_0 + 1}, \ldots, \tau_{i_1}) (\tau_{i_1 + 1}, \ldots, \tau_{i_2}) \cdots (\tau_{i_{m-1} + 1}, \ldots, \tau_{i_m}). \eeq
It is proven in \cite[Section 4]{STEELE2002} that this map $\tau \mapsto \tilde{\tau}$ is a bijection. Call this map $B$. To be clear, $B$ is defined by the real numbers $y_1, \ldots, y_n$, which were assumed to be linearly independent. This bijection on permutations is called the Bohnenblust-Spitzer algorithm.

To see how the Bohnenblust-Spitzer algorithm relates to our current situation, we now describe a well known explicit representation for the $\hat{w}_i$. For $1 \leq i \leq n$, define
\[ x_i := \frac{1}{\sigma_n} \bigg(\frac{i}{n} - \mu_n\bigg). \]
Let $S_\pi$ be the cumulative sum process defined as in the previous paragraph, but now using $x_1, \ldots, x_n$. Let $\convmin_\pi$ be the greatest convex minorant of $S_\pi$. Then for each $1 \leq i \leq n$, $\hat{w}_i$ is equal to the the left hand slope of $\convmin_\pi$ at $i$ (see e.g. \cite[Theorem 1.2.1]{RWD1988}). So given two consecutive knots $i_{k-1} < i_k$ of $\convmin_\pi$, and a point $i_{k-1} < i \leq i_k$, we have
\beq\label{eq:graphical-rep} \hat{w}_i = \frac{S_\pi(i_k) - S_\pi(i_{k-1})}{i_k - i_{k-1}} = \frac{x_{\pi(i_{k-1} + 1)} + \cdots + x_{\pi(i_k)}}{i_k - i_{k-1}}. \eeq

Now define the functions on permutations of $[n]$
\[ f_1(\tau) := 1 - \frac{3n}{n^2-1}\sum_{C \in \tau} \sum_{(i, j) \in C} |i/n - j/n|, \]
\[ f_2(\tau) := \sum_{C \in \tau} \bigg(\frac{1}{\sqrt{|C|}} \sum_{i \in C} x_i\bigg)^2, \]
where $\sum_{C \in \tau}$ denotes summation over the cycles of $\tau$, $|C|$ is the length of $C$, and if $C = (i_1, \ldots, i_k)$, then $\sum_{(i, j) \in C}$ means we are summing over consecutive pairs $(i_1, i_2), (i_2, i_3), \ldots, (i_k, i_1)$, and $\sum_{i \in C}$ means we are summing over $i_1, \ldots, i_k$. The next lemma allows us to prove Proposition \ref{prop:correlation-joint-asymptotics} by studying $(f_1(\rho), f_2(\rho))$, for $\rho$ a uniform random permutation on $[n]$.

\begin{lemma}\label{lemma:bohnenblust-spitzer-application}
For all $n \geq 2$, there is a coupling $(\pi, \rho)$, such that both $\pi, \rho$ are uniform random permutations on $[n]$, and 
\[ \sqrt{n} \bigg(1 - \frac{3 \sum_{i=1}^{n-1} |\pi(i) - \pi(i+1)|}{n^2-1}\bigg) = \sqrt{n} f_1(\rho) + o_P(1),  \]
\[ \sum_{i=1}^n \hat{w}_i^2 = f_2(\rho) + o_P(1).\]
\end{lemma}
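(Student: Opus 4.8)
The plan is to build the coupling directly from the Bohnenblust--Spitzer bijection, taking its auxiliary constants to be a tiny perturbation of the $x_i$. Fix $\epsilon_n := n^{-2}$. The $\Z$-linearly dependent $n$-tuples form a countable union of hyperplanes in $\R^n$, hence have empty interior, so we may choose $y_1, \ldots, y_n$ linearly independent over $\Z$ with $\max_{1 \le i \le n} |y_i - x_i| \le \epsilon_n$. Let $B$ be the Bohnenblust--Spitzer bijection on permutations of $[n]$ determined by $y_1, \ldots, y_n$ as in \eqref{eq:bohnenblust-spitzer-def}, and define the coupling by $\rho := B(\pi)$; since $B$ is a bijection (\cite[Section 4]{STEELE2002}) and $\pi$ is uniform, $\rho$ is uniform. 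Let $0 = i_0 < i_1 < \cdots < i_m = n$ be the knots of the greatest convex minorant $\convmin_\pi$ of the cumulative sum process built from $\pi$ and $y_1, \ldots, y_n$, so that the cycles of $\rho$ are exactly $C_k = (\pi(i_{k-1}+1), \ldots, \pi(i_k))$ for $1 \le k \le m$, and $m$ equals the number of cycles of $\rho$.

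For the first identity, compare the cyclic sums in $f_1(\rho)$ block by block with $\sum_{i=1}^{n-1} |\pi(i) - \pi(i+1)|$: the cyclic sum over $C_k$ is the word-sum over the block $\{i_{k-1}+1, \ldots, i_k\}$ with the boundary pair $(\pi(i_k), \pi(i_k+1))$ removed and the wrap-around pair $(\pi(i_k), \pi(i_{k-1}+1))$ added, which gives
\[ \sum_{C \in \rho} \sum_{(i,j) \in C} |i-j| \;=\; \sum_{i=1}^{n-1} |\pi(i) - \pi(i+1)| \;-\; \sum_{k=1}^{m-1} |\pi(i_k) - \pi(i_k+1)| \;+\; \sum_{k=1}^{m} |\pi(i_k) - \pi(i_{k-1}+1)|, \]
so these two quantities differ by at most $2mn$. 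Multiplying by $3\sqrt{n}/(n^2-1)$, the two sides of the first identity thus differ by $O(m/\sqrt{n})$. Since $\rho$ is uniform, $\E m = H_n \le 1 + \log n$, so $m = o_P(\sqrt n)$ by Markov's inequality, and the first identity follows. This step uses nothing about $y$ beyond $\Z$-linear independence.

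For the second identity, recall that $(\hat{w}_1, \ldots, \hat{w}_n)$ is by definition the isotonic regression of $(x_{\pi(1)}, \ldots, x_{\pi(n)})$, i.e. the Euclidean projection of that vector onto the closed convex cone of nondecreasing sequences, which is $1$-Lipschitz in $\ell^2$ and, since the cone contains $0$, satisfies $\|\Pi u\|_2 \le \|u\|_2$. Let $\hat{w}^y = (\hat{w}^y_1, \ldots, \hat{w}^y_n)$ be the isotonic regression of $(y_{\pi(1)}, \ldots, y_{\pi(n)})$; by \eqref{eq:graphical-rep} applied with the $y_i$ in place of the $x_i$, $\hat{w}^y$ is constant on each block $\{i_{k-1}+1, \ldots, i_k\}$ with value $|C_k|^{-1} \sum_{i \in C_k} y_i$, whence $\sum_{i=1}^n (\hat{w}^y_i)^2 = \sum_{C \in \rho} |C|^{-1}\big(\sum_{i \in C} y_i\big)^2$ — which is $f_2(\rho)$ with each $x_i$ replaced by $y_i$. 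A short computation from the definitions of $\mu_n, \sigma_n$ gives $\sum_{i=1}^n x_i^2 = n$ and $\max_i |x_i| \le \sqrt 3$; combining this with $\|(x_{\pi(i)})_i - (y_{\pi(i)})_i\|_2 \le \sqrt{n}\,\epsilon_n$, the contraction property forces $\big|\sum_i \hat{w}_i^2 - \sum_i (\hat{w}^y_i)^2\big| = O(n\epsilon_n)$, and replacing the $y_i$ by the $x_i$ inside $f_2$ moves its value by $O(n\epsilon_n)$ as well. Hence $\big|\sum_i \hat{w}_i^2 - f_2(\rho)\big| = O(n\epsilon_n) = O(1/n) \to 0$, deterministically, which is the second identity.

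The substantive ingredients are Steele's bijection theorem and the contraction property of isotonic regression; given those, the first identity is pure bookkeeping together with the crude bound $m = o_P(\sqrt n)$, and the second is a routine perturbation estimate. I expect the fiddliest part to be the second identity, because $f_2(\rho)$ is a hybrid object — the weights $\sum_{i \in C} x_i$ are pinned to the blocks $C$ cut out by $\convmin_\pi$, which is built from the $y_i$ rather than the $x_i$ — so the comparison must be split into a change-of-weights step and a change-of-partition step, and one must check that the perturbation scale $\epsilon_n$ is small enough ($n\epsilon_n \to 0$) to absorb the factor of $n$ picked up when summing the per-block errors.
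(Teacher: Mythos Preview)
Your proof is correct and follows essentially the same route as the paper: define $\rho$ via the Bohnenblust--Spitzer bijection applied to a small $\Z$-linearly-independent perturbation of the $x_i$, bound the first discrepancy by $O(m/\sqrt n)$ using $\E m = O(\log n)$, and handle the second by the $1$-Lipschitz property of the isotonic projection together with a direct $O(n\epsilon_n)$ estimate on $f_2$. The only differences are cosmetic (you take $\epsilon_n = n^{-2}$ where the paper takes $2^{-n}$, and you invoke $\|\Pi u\|_2 \le \|u\|_2$ where the paper uses Cauchy--Schwarz after the Lipschitz bound); your closing remark about a ``change-of-partition step'' is unnecessary, since the partition is fixed by the $y$-process throughout and no such step actually occurs in your argument.
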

\begin{proof}
Fix $n \geq 2$. We use the Bohnenblust-Spitzer algorithm. There is the slight problem that $x_1, \ldots, x_n$ are not linearly independent over $\Z$. This can be remedied by introducing $\delta := 2^{-n}$ (say), and taking a perturbation $x_1^\delta, \ldots, x_n^\delta$ which is linearly independent over $\Z$ and such that $|x_i^\delta - x_i| \leq \delta$ for all $1 \leq i \leq n$. Let $B^\delta$ be the bijection given by the Bohnenblust-Spitzer algorithm applied with $x_1^\delta, \ldots, x_n^\delta$. Since $B^\delta$ is a bijection, $B^\delta(\pi)$ is also a uniform random permutation. Set $\rho := B^\delta(\pi)$. 

We show the second statement first. Let $\hat{w}^\delta_1, \ldots, \hat{w}^\delta_n$ be the isotonic regression of $x_{\pi(1)}^\delta, \ldots, x_{\pi(n)}^\delta$. First, since isotonic regression is a projection onto a convex cone and thus is 1-Lipschitz, we have
\[ \sum_{i=1}^n (\hat{w}_i - \hat{w}^\delta_i)^2 \leq \sum_{i=1}^n (x_i^\delta - x_i)^2 \leq n \delta^2,\]
and thus applying Cauchy-Schwarz, we obtain
\[ \bigg|\sum_{i=1}^n \hat{w}_i^2 - \sum_{i=1}^n (\hat{w}^\delta_i)^2\bigg| = O(n \delta).\]
Now observe that by the definition of $B^\delta(\pi)$ \eqref{eq:bohnenblust-spitzer-def} and the characterization of the isotonic regression \eqref{eq:graphical-rep}, we have
\[ \sum_{i=1}^n (\hat{w}^\delta_i)^2 = \sum_{C \in \rho} |C| \bigg(\frac{1}{|C|} \sum_{i \in C} x^\delta_i\bigg)^2 = \sum_{C \in \rho} \bigg(\frac{1}{\sqrt{|C|}} \sum_{i \in C} x^\delta_i\bigg)^2 =: f_2^\delta(\rho),\]
and moreover
\[|f_2^\delta(\rho) - f_2(\rho)| = \sum_{C \in \rho} O(\sqrt{|C|}) \sqrt{|C|}\delta = O(n \delta).  \]
Putting everything together, we obtain the second statement (actually we have proven something slightly stronger -- the $o_P(1)$ can be replaced by $o(1)$).

For the first statement, observe that the differences only arise when we wrap around a cycle of $\rho$, or at the boundary between two cycles of $\rho$. Let $N_n$ be the number of cycles of $\rho$. This then gives
\[ \bigg| \sqrt{n} \bigg(1 - \frac{3 \sum_{i=1}^{n-1} |\pi(i) - \pi(i+1)|}{n^2-1} - f_1(\rho)\bigg)\bigg| 
= O\bigg(\frac{N_n}{\sqrt{n}}\bigg). \]
The desired statement now follows since $\E N_n = O(\log n)$ (see e.g. \cite[Section 2]{STEELE2002}).
\end{proof}

We have thus reduced to studying the joint asymptotics of $(f_1(\rho), f_2(\rho))$, for a uniform random permutation $\rho$. We will now suppose $\rho$ is sampled as follows. First, sample $L_1 \geq \cdots \geq L_{N_n}$, which are distributed as the ranked cycle lengths of a uniform random permutation on $[n]$ (and so $N_n$ is distributed as the number of cycles). For $1 \leq i \leq N_n$, let $a_i := L_1 + \cdots + L_i$, and let $a_0 := 0$. Independently of $L_1, \ldots, L_{N_n}$, sample $V_1, \ldots, V_n \stackrel{i.i.d.}{\sim} \mathrm{Unif}(0, 1)$, and let $\hat{F}_n$ be the empirical cdf of the $V$ sample. Let $\eta$ be the permutation defined by $\eta(i) := \hat{F}_n(V_i)$, $1 \leq i \leq n$. Note $\eta$ itself is a uniform random permutation on $[n]$. Finally, set $\rho$ to be the product of cycles
\[ \rho := (\eta(a_0+1), \ldots, \eta(a_1)) \cdots (\eta(a_{N_n-1}+1), \ldots, \eta(a_{N_n})). \]
For $1 \leq i \leq n$, let
\[ \tilde{V}_i := \frac{1}{\sqrt{12}} \bigg(V_i - \frac{1}{2}\bigg). \]
Let $1 \leq A_n \leq B_n \leq N_n$ be such that $L_{A_n+1} \geq \cdots \geq L_{B_n}$ are exactly the cycle lengths which are in the interval $[(\log n)^{10}, n / (\log n)^{10}]$. If there are no such cycle lengths, trivially set $A_n := 0, B_n := 0$. The need for introducing $A_n, B_n$ is detailed at two points later on, just before Lemma \ref{lemma:gcm-slope-clt} and just before the proof of Proposition \ref{prop:correlation-joint-asymptotics}. Let $\mc{F}_n := \sigma(N_n, L_1, \ldots, L_{N_n})$. Before continuing, we collect in the following lemma some basic facts about the cycles of random permutations.

\begin{lemma}\label{lemma:cycles-lemma}
For $1 \leq i \leq n$, the expected number of cycles of length $i$ in a uniform random permutation on $[n]$ is exactly $i^{-1}$. Consequently, $\E N_n = O(\log n)$, and also
\[ \E(N_n - (B_n - A_n)) = O(\log \log n). \]
We also have that 
\[ \frac{N_n - \log n}{\sqrt{\log n}} \stackrel{d}{\lra} N(0, 1). \]
Consequently, $N_n / \log n \stackrel{p}{\ra} 1$.
\end{lemma}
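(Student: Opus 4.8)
The plan is to treat the four assertions in turn; the exact formula $i^{-1}$ does the real work, and the rest follows by elementary harmonic-sum estimates or by quoting standard facts about random permutations.

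\emph{First}, for the expected number of cycles of length $i$ I would expand it as $\E \sum_{S} \ind(S \text{ is a } \pi\text{-cycle})$, the sum over $i$-element subsets $S \sse [n]$, where ``$S$ is a $\pi$-cycle'' means that the cycle of $\pi$ through any element of $S$ equals $S$. For a fixed such $S$ there are exactly $(i-1)!\,(n-i)!$ permutations of $[n]$ for which $S$ is a cycle — one of the $(i-1)!$ cyclic orders on $S$, together with an arbitrary permutation of the complement — so $\prob(S \text{ is a } \pi\text{-cycle}) = (i-1)!(n-i)!/n!$, and summing over the $\binom{n}{i}$ choices of $S$ gives $\binom{n}{i}(i-1)!(n-i)!/n! = 1/i$. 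Since $N_n$ is the sum over $i$ of the number of cycles of length $i$, linearity of expectation gives $\E N_n = \sum_{i=1}^n i^{-1} = H_n$, the $n$-th harmonic number, which is $\log n + O(1)$.

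\emph{Second}, by the way $A_n, B_n$ were defined, $B_n - A_n$ is precisely the number of cycles of $\pi$ whose length lies in $[(\log n)^{10}, n/(\log n)^{10}]$, so $N_n - (B_n - A_n)$ counts the cycles of length $< (\log n)^{10}$ together with those of length $> n/(\log n)^{10}$. By the formula just proved, its expectation equals $\sum_{1 \le i < (\log n)^{10}} i^{-1} + \sum_{n/(\log n)^{10} < i \le n} i^{-1}$. The first sum is $10 \log\log n + O(1)$, and the second is $H_n - H_m$ with $m$ of order $n/(\log n)^{10}$, hence also $\log(n/m) + O(1) = 10\log\log n + O(1)$; so the total is $O(\log\log n)$.

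\emph{Third}, for the central limit theorem I would invoke the classical representation (e.g. \cite[Section 2]{STEELE2002}) that $N_n \stackrel{d}{=} \sum_{i=1}^n \xi_i$ for independent $\xi_i \sim \mathrm{Bernoulli}(1/i)$. Then $\E N_n = H_n \sim \log n$ and $\Var{N_n} = \sum_{i=1}^n i^{-1}(1 - i^{-1}) = \log n + O(1) \toinf$, and the Lyapunov condition holds because $\sum_{i=1}^n \E |\xi_i - i^{-1}|^3 \le 2 H_n = O(\log n) = o\big( (\log n)^{3/2} \big) = o\big( \Var{N_n}^{3/2} \big)$; this gives $(N_n - \E N_n)/\sqrt{\Var{N_n}} \stackrel{d}{\lra} N(0,1)$, and since $\E N_n - \log n$ and $\Var{N_n} - \log n$ are both $O(1) = o(\sqrt{\log n})$, we may replace $\E N_n$ and $\Var{N_n}$ by $\log n$, yielding the stated convergence. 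Finally $N_n/\log n = 1 + (N_n - \log n)/\log n \stackrel{p}{\ra} 1$, since $(N_n - \log n)/\sqrt{\log n}$ is tight while $(\log n)^{-1/2} \ra 0$.

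\emph{Obstacle.} None of this is genuinely hard; the only places needing care are the combinatorial bookkeeping in the $1/i$ computation and, in the second step, checking that \emph{both} tail harmonic sums are truly $O(\log\log n)$ — which is exactly what the polylogarithmic cutoffs $(\log n)^{10}$ and $n/(\log n)^{10}$ are chosen to ensure.
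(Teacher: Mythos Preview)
Your proposal is correct and follows essentially the same route as the paper: the $1/i$ formula, the two tail harmonic sums for the $O(\log\log n)$ bound, and the CLT via the Bernoulli representation of $N_n$. The only difference is that the paper simply \emph{cites} \cite{LEN1997} for the $1/i$ formula and \cite{STEELE2002} for the CLT, whereas you supply the short direct arguments (the $(i-1)!(n-i)!$ count and the Lyapunov verification); your version is thus a more self-contained rendering of the same proof.
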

\begin{proof}
For the first claim, see e.g. \cite[Theorem 2]{LEN1997}. Using this claim, we have
\[ \E (N_n - (B_n - A_n)) \leq  \sum_{i=1}^{(\log n)^{10}} \frac{1}{i} + \sum_{i=n/(\log n)^{10}}^n \frac{1}{i} = O(\log \log n).\]
For a proof of the central limit theorem, see e.g. \cite[Section 2]{STEELE2002}.
\end{proof}

The following lemma is an intermediate step in simplifying $f_1(\rho), f_2(\rho)$.

\begin{lemma}\label{lemma:introduce-independent-uniform}
We have
\[ \sqrt{n} f_1(\rho) = \frac{1}{\sqrt{n}} \sum_{i=1}^{n-1} (1 - 3|\eta(i)/n - \eta(i+1)/n|) + o_P(1), \]
\[ f_2(\rho) = \sum_{i=A_n+1}^{B_n}  \bigg(\frac{1}{\sqrt{L_i}} \sum_{j=a_{i-1}+1}^{a_i} \tilde{V}_j\bigg)^2 + o_P(\sqrt{\log n}). \]
\end{lemma}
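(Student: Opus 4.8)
The plan is to derive both identities by writing out the cycle structure of $\rho$ --- whose $k$th cycle is $C_k=(\eta(a_{k-1}+1),\dots,\eta(a_k))$, of length $L_k$ --- and bounding the resulting discrepancies using Lemma~\ref{lemma:cycles-lemma}. For the first display, $\sum_{C\in\rho}\sum_{(i,j)\in C}|i/n-j/n|$ and the linear sum $\sum_{i=1}^{n-1}|\eta(i)/n-\eta(i+1)/n|$ agree except at the $O(N_n)$ cycle boundaries: one removes the $N_n-1$ ``joining'' increments $|\eta(a_k)/n-\eta(a_k+1)/n|$ and adds the $N_n$ ``wrap-around'' increments $|\eta(a_k)/n-\eta(a_{k-1}+1)/n|$, each at most $1$, so the two sums differ by at most $2N_n$. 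Since $\sqrt n\cdot\tfrac{3n}{n^2-1}=O(n^{-1/2})$ and $N_n=O_P(\log n)$ by Lemma~\ref{lemma:cycles-lemma}, this contributes $o_P(1)$ after multiplying by $\sqrt n$; the remaining corrections --- replacing $(n-1)/\sqrt n$ by $\sqrt n$, and $\tfrac{3n}{n^2-1}$ by $\tfrac3n$ against a sum of size $O(n)$ --- are $O(n^{-1/2})$ and $O(n^{-3/2})$. This gives the first statement.

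For the second, since $C_k=\{\eta(a_{k-1}+1),\dots,\eta(a_k)\}$ we have $f_2(\rho)=\sum_{k=1}^{N_n}L_k^{-1}(\sum_{j=a_{k-1}+1}^{a_k}x_{\eta(j)})^2$, and I would make two reductions. \textbf{(i) Discarding atypical cycles.} For $k\notin\{A_n+1,\dots,B_n\}$ (cycle length outside $[(\log n)^{10},\,n(\log n)^{-10}]$), conditioning on $\mc{F}_n$ the set $\{\eta(a_{k-1}+1),\dots,\eta(a_k)\}$ is a uniform random $L_k$-subset of $[n]$; since $\sum_i x_i=0$ and $\tfrac1n\sum_i x_i^2=1$, the finite-population variance formula gives $\E[L_k^{-1}(\sum_{i\in C_k}x_i)^2\mid\mc{F}_n]=\tfrac{n-L_k}{n-1}\le1$. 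Summing over the discarded $k$ and using $\E(N_n-(B_n-A_n))=O(\log\log n)$ from Lemma~\ref{lemma:cycles-lemma} together with Markov's inequality, the discarded mass is $O_P(\log\log n)=o_P(\sqrt{\log n})$. \textbf{(ii) Replacing $x_{\eta(j)}$ by $\tilde V_j$.} Here $x_{\eta(j)}=\sigma_n^{-1}(\eta(j)/n-\mu_n)$ with $\eta(j)/n$ the value at $V_j$ of the empirical cdf of the $V$-sample, so $\varepsilon_{n,j}:=x_{\eta(j)}-\tilde V_j$ equals, up to a deterministic $O(n^{-1})$ term, $\sigma_n^{-1}$ times the empirical-process increment $\eta(j)/n-V_j$; in particular $\sup_j|\varepsilon_{n,j}|=O_P(n^{-1/2})$ by Glivenko--Cantelli at the Dvoretzky--Kiefer--Wolfowitz rate.

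Writing $S_k=\sum_j x_{\eta(j)}$, $T_k=\sum_j\tilde V_j$, $D_k=S_k-T_k=\sum_j\varepsilon_{n,j}$ (all sums over $a_{k-1}<j\le a_k$), the replacement error on the retained cycles is $\sum_k L_k^{-1}D_k(2T_k+D_k)$. The $D_k^2$ piece is at most $\sum_k L_k\sup_j|\varepsilon_{n,j}|^2\le n\sup_j|\varepsilon_{n,j}|^2=O_P(1)$. The cross term $\sum_k L_k^{-1}D_kT_k$ is the main obstacle: the crude bound $|D_k|\le L_k\sup_j|\varepsilon_{n,j}|$, even with Cauchy--Schwarz, only gives $O_P(\sqrt{\log n})$, because $\sum_k L_k^{-1}T_k^2=\Theta_P(\log n)$ (there being $\asymp\log n$ retained cycles) --- exactly the target scale --- so one must extract cancellation. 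Let $g_r:=r/n-V_{(r)}$ (so $\max_r|g_r|=O_P(n^{-1/2})$, where $V_{(1)}<\dots<V_{(n)}$ are the order statistics) and $R_k:=\{\eta(j):a_{k-1}<j\le a_k\}$ (a uniform random $L_k$-subset of $[n]$ given $\mc{F}_n$); then $D_k=\sigma_n^{-1}\sum_{r\in R_k}g_r+O(L_k n^{-1})$, and we decompose $\sum_{r\in R_k}g_r=\tfrac{L_k}{n}G+W_k$, where $G:=\sum_{r=1}^n g_r=\tfrac{n+1}{2}-\sum_{j=1}^n V_j=O_P(\sqrt n)$ and $\E[W_k\mid\mc{F}_n,V]=0$, $\mathrm{Var}(W_k\mid\mc{F}_n,V)\le L_k\max_r g_r^2=O_P(L_k/n)$.

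The ``bias'' contribution to the cross term telescopes: $\sum_k L_k^{-1}\cdot\tfrac{L_k}{n}\,G\,T_k=\tfrac{G}{n}\sum_{k=A_n+1}^{B_n}T_k=\tfrac{G}{n}\sum_{j=a_{A_n}+1}^{a_{B_n}}\tilde V_j$, and since $\sum_k T_k$ ranges over a contiguous block of indices it is (given $\mc{F}_n$) a sum of at most $n$ i.i.d.\ centered bounded variables, so this is $\tfrac{O_P(\sqrt n)}{n}\cdot O_P(\sqrt n)=O_P(1)$. The ``fluctuation'' contribution $\sigma_n^{-1}\sum_k L_k^{-1}W_kT_k$ has conditional mean $0$ given $(\mc{F}_n,V)$, and a conditional second-moment estimate --- using the negative correlation structure of the random partition $R_1,\dots,R_{N_n}$ of $[n]$, $\max_r g_r^2=O_P(n^{-1})$, $\E|T_k|\lesssim\sqrt{L_k}$, and $\sum_k|T_k|=O_P(\sqrt{n\log n})$ --- bounds it by $O_P(\sqrt{(\log n)/n})=o_P(1)$. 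With the negligible $O(L_k n^{-1})$ remainder, the cross term, and hence the whole replacement error, is $O_P(1)=o_P(\sqrt{\log n})$, which completes the argument. The delicate point is precisely this cross term: the obvious estimate is off by exactly $\sqrt{\log n}$, and one must exploit both the oscillation of the uniform empirical process (so that $D_k$ splits into a common bias part $\propto L_k/n$ plus a genuinely small fluctuation) and the telescoping of $\sum_k T_k$.
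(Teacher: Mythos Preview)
Your argument for the first display, and for discarding the atypical cycles (your step (i)), is correct and essentially the same as the paper's.

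The difference is in step (ii), the replacement of $x_{\eta(j)}$ by $\tilde V_j$ on the retained cycles. Here the paper's route is much shorter than yours: apply Cauchy--Schwarz \emph{termwise} to the conditional expectation,
\[
\E\Big[\big|L_k^{-1}S_k^2-L_k^{-1}T_k^2\big|\,\Big|\,\mc F_n\Big]
\le \Big(\E\big[L_k^{-1}(S_k+T_k)^2\,\big|\,\mc F_n\big]\Big)^{1/2}
\Big(\E\big[L_k^{-1}(S_k-T_k)^2\,\big|\,\mc F_n\big]\Big)^{1/2}.
\]
The first factor is $O(1)$ (both $L_k^{-1/2}S_k$ and $L_k^{-1/2}T_k$ have conditional second moment $O(1)$), and the second is $O(\sqrt{L_k/n})$ since $|S_k-T_k|\le L_k\sup_j|\varepsilon_{n,j}|$ and $\E\Delta_n^2=O(1/n)$. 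Summing and using $\E\sum_k\sqrt{L_k}=\sum_{\ell=1}^n \ell^{-1/2}=O(\sqrt n)$ (from Lemma~\ref{lemma:cycles-lemma}) gives $\E R_2=O(1)$, hence $R_2=o_P(\sqrt{\log n})$. The point you missed is that the crude bound $|D_k|\le L_k\sup_j|\varepsilon_{n,j}|$ \emph{is} enough, provided you pair it with the fact that $L_k^{-1/2}(S_k+T_k)$ is $O(1)$ in $L^2$; your global Cauchy--Schwarz on $\sum_k$ throws this away and replaces it by $\sum_k L_k^{-1}T_k^2=\Theta_P(\log n)$, which is what forces you into the bias/fluctuation detour.

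That detour also has a genuine gap. You claim the fluctuation piece $\sum_k L_k^{-1}W_kT_k$ has conditional mean zero given $(\mc F_n,V)$. But given all of $V$, the permutation $\eta$, the sets $R_k$, and hence both $W_k$ and $T_k$, are deterministic --- so the statement is vacuous. The natural conditioning is on $(\mc F_n,\text{order statistics of }V)$, which fixes the $g_r$ while leaving $\eta$ uniform. Under that conditioning, however, $T_k=\sum_{r\in R_k}\tilde V_{(r)}$ is \emph{also} a function of $R_k$, so $W_k$ and $T_k$ are not conditionally independent, and a direct calculation gives $\E[W_kT_k\mid\mc F_n,g]=\tfrac{L_k(n-L_k)}{n(n-1)}\sum_r(g_r-G/n)\,\tilde V_{(r)}$, which is not zero in general. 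Your second-moment sketch would need substantial repair to handle this; the termwise bound above sidesteps the issue entirely.
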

\begin{proof}
First, observe that
\[ \sqrt{n}\bigg( f_1(\rho) - \frac{1}{n} \sum_{C \in \rho} \sum_{(i, j) \in C} (1 - 3|\rho(i)/n - \rho(j)/n|)\bigg) = O\bigg(\frac{1}{n^{3/2}}\bigg).\]
Next, observe
\[ \sqrt{n} \bigg(\frac{1}{n} \sum_{C \in \rho} \sum_{(i, j) \in C} |\rho(i)/n - \rho(j)/n| - \frac{1}{n} \sum_{i=1}^{n-1} |\eta(i)/n - \eta(i+1)/n| \bigg) = O\bigg(\frac{N_n}{\sqrt{n}}\bigg). \]
The first claim now follows, since $\E N_n = O(\log n)$ (by Lemma \ref{lemma:cycles-lemma}).

For the second claim, observe
\[
\bigg| f_1(\rho) - \sum_{i=A_n+1}^{B_n} \bigg(\frac{1}{\sqrt{L_i}} \sum_{j=a_{i-1}+1}^{a_i} \tilde{V}_j\bigg)^2 \bigg| \leq R_1 + R_2,
\]
with 
\[R_1 :=  \sum_{\substack{C \in \rho \\ |C| \notin [(\log n)^{10}, n /(\log n)^{10}]}} \bigg(\frac{1}{\sqrt{|C|}} \sum_{i \in C} x_i\bigg)^2  ,\]
\[ R_2 :=\sum_{i=A_n+1}^{B_n} \bigg|\bigg(\frac{1}{\sqrt{L_i}} \sum_{j=a_{i-1}+1}^{a_i} x_{\eta(j)}\bigg)^2 - \bigg(\frac{1}{\sqrt{L_i}} \sum_{j=a_{i-1}+1}^{a_i} \tilde{V}_j\bigg)^2\bigg|  .\]
To bound $R_1$, observe that if we condition on $\mc{F}_n$, then for any $1 \leq i \leq N_n$, we have
\[ \E\bigg[ \bigg(\frac{1}{\sqrt{L_i}} \sum_{j=a_{i-1+1}}^{a_i} x_{\eta(j)}\bigg)^2 ~\bigg|~ \mc{F}_n\bigg] = O(1). \]
We can thus obtain (recalling Lemma \ref{lemma:cycles-lemma})
\[ \E R_1 = O(\E (N_n - (B_n - A_n))) = O(\log \log n), \]
We thus have $R_1 = o_P(\sqrt{\log n})$.

Next, we will show that $\E R_2 = O(1)$, which implies $R_2 = o_P(\sqrt{\log n})$.
Let $\Delta_n := \sup_{x \in [0, 1]} |\hat{F}_n(x) - x|$. Observe for any $1 \leq i \leq N_n$, we have by Cauchy-Schwarz
\[ \begin{split} \E\bigg[\bigg|\bigg(\frac{1}{\sqrt{L_i}}& \sum_{j=a_{i-1}+1}^{a_i} x_{\eta(j)}\bigg)^2 - \bigg(\frac{1}{\sqrt{L_i}} \sum_{j=a_{i-1}+1}^{a_i} \tilde{V}_j\bigg)^2 \bigg| ~\bigg|~ \mc{F}_n \bigg] \leq S_1 S_2, 
\end{split} \]
where
\[ S_1 := \bigg(\E\bigg[ \bigg(\frac{1}{\sqrt{L_i}}\sum_{j=a_{i-1}+1}^{a_i} x_{\eta(j)} + \tilde{V}_j\bigg)^2 ~\bigg|~ \mc{F}_n\bigg]\bigg)^{1/2} , \]
\[ S_2 := \bigg(\E\bigg[  \bigg(\frac{1}{\sqrt{L_i}} \sum_{j=a_{i-1}+1}^{a_i} x_{\eta(j)} - \tilde{V}_j\bigg)^2 ~\bigg|~ \mc{F}_n\bigg]\bigg)^{1/2}.\]
The inequality $(x+y)^2 \leq 2x^2 + 2y^2$ and a moment computation gives
\[ S_1 = O(1). \]
To bound $S_2$, we have by the definition of $\Delta_n$, the independence of $\Delta_n$ and $\mc{F}_n$, and the facts $|\mu_n - 1/2| \leq 1/n$, $|\sigma_n - 1 / \sqrt{12}| = O(1/n^2)$, $\E \Delta_n^2 = O(1/n)$ (which follows by e.g. the Dvoretzky-Kiefer-Wolfowitz inequality),  
\[ S_2 \leq O(n^{-1/2}) \sqrt{L_i} .\]
We thus obtain
\[ \E R_2 = O(n^{-1/2}) \E \bigg[\sum_{i=A_n+1}^{B_n}\sqrt{L_i}\bigg].  \] 
By Lemma \ref{lemma:cycles-lemma}, we have
\[  \E \bigg[\sum_{i=A_n+1}^{B_n}\sqrt{L_i}\bigg] \leq \sum_{i=1}^n \sqrt{i} \frac{1}{i} = O(n^{1/2}). \]
The desired result now follows.
\end{proof}

We now show that the simplified version of $f_2(\rho)$ given by Lemma \ref{lemma:introduce-independent-uniform} is asymptotically normal. Here is where we crucially use our lower bound on the cycle lengths $L_j$ for $A_n+1 \leq j \leq B_n$, because this ensures that every term in the quantity $T_n$ defined below is approximately a $\chi^2_{(1)}$, meaning $T_n$ is approximately a $\chi^2_{(B_n - A_n)}$.

\begin{lemma}\label{lemma:gcm-slope-clt}
We have
\[ \frac{1}{\sqrt{\log n}}\bigg(\sum_{i=A_n+1}^{B_n}  \bigg(\frac{1}{\sqrt{L_i}} \sum_{j=a_{i-1}+1}^{a_i} \tilde{V}_j\bigg)^2 - \log n\bigg) \stackrel{d}{\lra} N(0, 3).  \]
\end{lemma}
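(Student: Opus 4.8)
Write $W_i := L_i^{-1/2}\sum_{j=a_{i-1}+1}^{a_i}\tilde V_j$ and $K_n := B_n - A_n$, so that the quantity of interest is $T_n := \sum_{i=A_n+1}^{B_n} W_i^2$ and the claim is $(T_n - \log n)/\sqrt{\log n} \stackrel{d}{\lra} N(0,3)$. The plan is to argue conditionally on $\mc{F}_n = \sigma(N_n, L_1,\ldots,L_{N_n})$, which fixes the block boundaries $a_i$. Since the $L$'s are independent of the i.i.d.\ sequence $(V_j)$, conditionally on $\mc{F}_n$ the blocks $\{j : a_{i-1} < j \le a_i\}$, $A_n < i \le B_n$, are disjoint and consist of i.i.d.\ copies of a bounded, mean-zero, variance-one variable; hence the $W_i$, $A_n < i \le B_n$, are conditionally independent, each a normalized sum of $L_i$ such variables, and $\E[W_i^2 \mid \mc{F}_n] = 1$ exactly, so $\E[T_n \mid \mc{F}_n] = K_n$. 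We may also discard the event $\{A_n = B_n = 0\}$, which has probability tending to $0$ since $K_n/\log n \stackrel{p}{\lra} 1$ by Lemma \ref{lemma:cycles-lemma}.

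The first substantive step is to control the conditional moments of the $W_i^2$. Because $\tilde V_j$ is bounded, a standard moment computation for normalized sums gives, uniformly over $A_n < i \le B_n$,
\[ \E[W_i^4 \mid \mc{F}_n] = 3 + O(L_i^{-1}), \qquad \E[W_i^8 \mid \mc{F}_n] = O(1), \]
hence $\Var(W_i^2 \mid \mc{F}_n) = 2 + O(L_i^{-1})$ and $\E[(W_i^2-1)^4 \mid \mc{F}_n] = O(1)$. This is exactly where the lower bound $L_i \ge (\log n)^{10}$ is used: it forces the $O(L_i^{-1})$ corrections to be uniformly $o(1)$, so each $W_i^2$ behaves like a $\chi^2_{(1)}$ and $s_n^2 := \sum_{i=A_n+1}^{B_n} \Var(W_i^2 \mid \mc{F}_n) = 2K_n(1+o(1))$.

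Next, apply a conditional central limit theorem to the conditionally independent, conditionally centered array $\{W_i^2 - 1 : A_n < i \le B_n\}$. Lyapunov's condition holds because
\[ \frac{\sum_{i=A_n+1}^{B_n} \E[(W_i^2-1)^4 \mid \mc{F}_n]}{s_n^4} = \frac{O(K_n)}{(2K_n(1+o(1)))^2} = O(K_n^{-1}) \stackrel{p}{\lra} 0, \]
using $K_n \stackrel{p}{\lra} \infty$. Thus conditionally on $\mc{F}_n$, $(T_n - K_n)/s_n \to N(0,1)$, and since $s_n^2 = 2K_n(1+o(1))$ with $K_n/\log n \stackrel{p}{\lra} 1$ we may replace $s_n$ by $\sqrt{2\log n}$, so conditionally on $\mc{F}_n$, $(T_n - K_n)/\sqrt{\log n} \to N(0,2)$. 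It remains to trade the random $\mc{F}_n$-measurable centering $K_n$ for the deterministic $\log n$: by Lemma \ref{lemma:cycles-lemma}, $\E(N_n - K_n) = O(\log\log n) = o(\sqrt{\log n})$ and $(N_n - \log n)/\sqrt{\log n} \to N(0,1)$, hence $(K_n - \log n)/\sqrt{\log n} \to N(0,1)$. Writing $(T_n - \log n)/\sqrt{\log n} = (T_n-K_n)/\sqrt{\log n} + (K_n-\log n)/\sqrt{\log n}$ and noting the second summand is $\mc{F}_n$-measurable while the first is conditionally asymptotically $N(0,2)$, a tower-property computation with characteristic functions,
\[ \E\!\left[e^{iu(T_n-\log n)/\sqrt{\log n}}\right] = \E\!\left[ e^{iu(K_n-\log n)/\sqrt{\log n}}\, \psi_n(u)\right], \qquad \psi_n(u) := \E\!\left[e^{iu(T_n-K_n)/\sqrt{\log n}} \mid \mc{F}_n\right] \stackrel{p}{\lra} e^{-u^2}, \]
together with $\E[e^{iu(K_n-\log n)/\sqrt{\log n}}] \to e^{-u^2/2}$ and boundedness of $\psi_n(u)$, gives the limit $e^{-u^2}e^{-u^2/2} = e^{-3u^2/2}$, i.e.\ $N(0, 2+1) = N(0,3)$, with the two pieces asymptotically independent.

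I expect the main obstacle to be making the conditional CLT genuinely uniform along the conditioning: the number of summands $K_n$, the block sizes $L_i$, and hence $s_n$ are all random and $\mc{F}_n$-measurable, so the Lyapunov step must be stated as an honest convergence-in-probability statement for conditional characteristic functions (or conditional CDFs, e.g.\ via a conditional Berry--Esseen bound of order $\sum_i \E[|W_i^2-1|^3 \mid \mc{F}_n]/s_n^3 = O(K_n^{-1/2})$), and that statement must then survive being fed through the tower-property combination above; one also has to excise cleanly the negligible event where the cycle-length window is empty. The moment estimates for the $W_i$ and the final Slutsky-type bookkeeping (replacing $s_n$ by $\sqrt{2\log n}$ and $K_n$ by $\log n$) are routine once the lower bound $(\log n)^{10}$ is in hand.
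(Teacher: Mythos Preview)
Your proof is correct and follows the same overall architecture as the paper: decompose $(T_n-\log n)/\sqrt{\log n}$ into $(T_n-K_n)/\sqrt{\log n}+(K_n-\log n)/\sqrt{\log n}$, establish that the first piece is conditionally (given $\mc{F}_n$) asymptotically $N(0,2)$, invoke Lemma~\ref{lemma:cycles-lemma} for the second piece, and combine the two via the tower property for characteristic functions. The one substantive difference is in how the conditional CLT for $T_n-K_n$ is obtained. The paper compares the conditional characteristic function $\prod_j \phi_{L_j}(\theta/\sqrt{\log n})e^{-i\theta/\sqrt{\log n}}$ directly to the $\chi^2$ product $\prod_j \phi(\theta/\sqrt{\log n})e^{-i\theta/\sqrt{\log n}}$, using a quantitative $\chi^2$-approximation bound $\sup_{|\theta|\le M}|\phi_k(\theta)-\phi(\theta)|=O((1+M^3)/k)$ imported from a Stein's-method result of Gaunt--Pickett--Reinert; the lower bound $L_j\ge(\log n)^{10}$ then makes the telescoping error $O(N_n/(\log n)^{10})$. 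Your route instead computes the conditional moments $\E[W_i^4\mid\mc{F}_n]=3+O(L_i^{-1})$ and $\E[W_i^8\mid\mc{F}_n]=O(1)$ directly and verifies Lyapunov (or Berry--Esseen), using the same lower bound on $L_i$ to force $s_n^2=2K_n(1+o(1))$. Your argument is more self-contained, avoiding the external chi-square approximation reference; the paper's argument is slightly cleaner bookkeeping-wise since it lands immediately on the exact $\chi^2_{(K_n)}$ characteristic function. Either way the remaining steps (replacing $K_n$ by $\log n$, the tower-property combination) are identical.
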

\begin{proof}
Let
\[ T_n := \sum_{i=A_n+1}^{B_n}  \bigg(\frac{1}{\sqrt{L_i}} \sum_{j=a_{i-1}+1}^{a_i} \tilde{V}_j\bigg)^2  .\]
Let $M_n := B_n - A_n$. Since $\E (N_n - M_n) = O(\log \log n)$ (by Lemma \ref{lemma:cycles-lemma}), we have
\[ \frac{1}{\sqrt{\log n}} (T_n - \log n) = \frac{1}{\sqrt{\log n}}(T_n - M_n) + \frac{1}{\sqrt{\log n}} (N_n - \log n) + o_P(1). \]
We know that $(N_n - \log n)/\sqrt{\log n} \stackrel{d}{\ra} N(0, 1)$ (by Lemma \ref{lemma:cycles-lemma}). Thus it suffices to show that for all $\theta \in \R$, we have
\[ \E \bigg[ \exp(i \theta (T_n - M_n) / \sqrt{\log n}) ~\bigg|~ \mc{F}_n\bigg] \stackrel{p}{\lra} \exp(-\theta^2). \]
To start, for $k \geq 1$, let $\phi_k$ be the characteristic function of 
\[ \bigg(\frac{1}{\sqrt{k}} \sum_{i=1}^k \tilde{V}_i\bigg)^2. \]
By the central limit theorem and the continuous mapping theorem, we have that $\phi_k \ra \phi$ pointwise, where $\phi$ is the characteristic function of $\chi^2_{(1)}$. Moreover, we claim that for all $M > 0$, we have
\beq\label{eq:rate-of-conv} \sup_{|\theta| \leq M} |\phi_k(\theta) - \phi(\theta)| = O\bigg(\frac{1 + M^3}{k}\bigg), \eeq
For now let us take the claim as given. Observe
\[\begin{split}
\E \bigg[ \exp(i \theta (T_n - M_n) / \sqrt{\log n}) ~\bigg|~ \mc{F}_n\bigg] =\prod_{j=A_n+1}^{B_n} \phi_{L_j}(\theta / \sqrt{\log n}) e^{-i\theta / \sqrt{\log n}}.
\end{split}\]
Observe moreover that
\[ \begin{split}
\bigg|\prod_{j=A_n+1}^{B_n} \phi_{L_j}(\theta / \sqrt{\log n}) &e^{-i\theta / \sqrt{\log n}} - \prod_{j=A_n+1}^{B_n} \phi(\theta / \sqrt{\log n}) e^{-i\theta / \sqrt{\log n}} \bigg| \leq \\
& \sum_{j= A_n+1}^{B_n} |\phi_{L_j}(\theta / \sqrt{\log n}) - \phi(\theta / \sqrt{\log n})| .
\end{split}\]
By the claim, and the fact that $L_j \geq (\log n)^{10}$ for all $A_n+1 \leq j \leq B_n$, the right hand side above may be bounded by
\[ O\bigg((1 + |\theta|^3)\sum_{j=A_n}^{B_n} \frac{1}{L_j}\bigg) =O\bigg(\frac{(1 + |\theta|^3) N_n}{(\log n)^{10}}\bigg) = o_P(1), \]
where the second equality follows since $\E N_n = O(\log n)$ (by Lemma \ref{lemma:cycles-lemma}). For $k \geq 1$, let $\psi_k$ be the characteristic function of
\[ \frac{1}{\sqrt{k}} \sum_{i=1}^k (A_i - 1), ~~~ A_i \stackrel{i.i.d.}{\sim} \chi^2_{(1)} .\]
Then we have shown that for all $\theta \in \R$,
\[\E \bigg[ \exp(i \theta (T_n - M_n) / \sqrt{\log n}) ~\bigg|~ \mc{F}_n \bigg] - \psi_{M_n} (\theta \sqrt{M_n / \log n}) \stackrel{p}{\lra} 0.   \]
Now observe that $\psi_{M_n}$ is 1-Lipschitz for all $n$, and $M_n  / \log n \stackrel{p}{\ra} 1$ (by Lemma \ref{lemma:cycles-lemma}). Combining this with the central limit theorem, we have that 
\[ \psi_{M_n}(\theta \sqrt{M_n / \log n}) \stackrel{p}{\lra} \exp(-\theta^2). \]
The desired result now follows, modulo the claim \eqref{eq:rate-of-conv}.

To show the claim, first let $W_k := (k^{-1/2} \sum_{i=1}^k \tilde{V}_i)^2$, and let $A \sim \chi^2_{(1)}$. For $\theta \in \R$, let $h_\theta : \R \ra \R$ be defined by $h_\theta(x) := \cos(\theta x)$. Note for $0 \leq i \leq 3$, we have $\sup_{x \in \R} |h^{(i)}(x)| \leq |\theta|^i$. Then by \cite[Theorem 3.1]{GPR2017}, we have that for all $\theta \in \R$, $k \geq 1$,
\[ |\E h_\theta(W_k) - \E h_\theta(A)| = O\bigg(\frac{1 + |\theta|^3}{k}\bigg).\]
Applying this theorem also to the functions $g_\theta(x) := \sin(\theta x)$, we obtain the desired claim.
\end{proof}

The following lemma is the key result needed to show asymptotic independence of $(f_1(\rho), f_2(\rho))$.

\begin{lemma}\label{lemma:perm-osc-reduction}
We have
\[\begin{split}
\frac{1}{\sqrt{n}} \sum_{i=1}^{n-1} (1 - &3|\eta(i)/n - \eta(i+1)/n|)  = \\
&\frac{1}{\sqrt{n}} \sum_{i=1}^{n- n / (\log n)^{9}} (2 - 3|V_i - V_{i+1}| - 6V_i(1-V_i)) + o_P(1). 
\end{split}\]
Moreover, either side above converges in distribution to $N(0, 2/5)$.
\end{lemma}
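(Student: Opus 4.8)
The plan is to first rewrite the left-hand side as a sum over the i.i.d.\ uniforms $V_1,\dots,V_n$ via a first-order (H\'ajek) projection, and then apply a central limit theorem for $1$-dependent sequences. Since the $V_k$ are a.s.\ distinct, $\hat F_n(V_i)$ is $n^{-1}$ times the rank of $V_i$, and counting ranks yields the deterministic identity $|\hat F_n(V_i)-\hat F_n(V_{i+1})| = \tfrac1n(1+N_i)$, where $N_i := \#\{k\ne i,i+1 : V_k\text{ lies strictly between }V_i\text{ and }V_{i+1}\}$. Thus the left-hand side equals $\tfrac1{\sqrt n}\sum_{i=1}^{n-1}\big(1-\tfrac3n-\tfrac3n N_i\big)$, which has expectation $-\tfrac1{\sqrt n}+\tfrac1{n^{3/2}} = o(1)$ (since $\E N_i = (n-2)/3$), so up to $o(1)$ it equals $-\tfrac3{n^{3/2}}\big(\sum_{i=1}^{n-1}N_i - \E\sum_{i=1}^{n-1}N_i\big)$. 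Writing $I_i$ for the interval with endpoints $V_i,V_{i+1}$ and $Z_{ik} := \ind(V_k\in I_i) - |I_i|$, one has $N_i-(n-2)|I_i| = \sum_{k\ne i,i+1}Z_{ik}$; hence, setting $A := \sum_{i=1}^{n-1}\sum_{k\ne i,i+1}Z_{ik}$ and $B := (n-2)\sum_{i=1}^{n-1}\big(|V_i-V_{i+1}|-\tfrac13\big)$, we get $\sum_{i=1}^{n-1}N_i - \E\sum_{i=1}^{n-1}N_i = A + B$.

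The term $B$ is easy: $\sum_{i=1}^{n-1}(|V_i-V_{i+1}|-\tfrac13)$ is a bounded, $1$-dependent, mean-zero sum, hence $O_P(\sqrt n)$, and so $-\tfrac3{n^{3/2}}B = -\tfrac3{\sqrt n}\sum_{i=1}^{n-1}(|V_i-V_{i+1}|-\tfrac13) + o_P(1)$. For $A$ I would compute the first-order projection of $Z_{ik}$ onto the coordinates: using $V_k\perp(V_i,V_{i+1})$ one checks $\E[Z_{ik}\mid V_i] = \E[Z_{ik}\mid V_{i+1}] = \E[Z_{ik}\mid V_i,V_{i+1}] = 0$ and $\E[Z_{ik}\mid V_k] = 2V_k(1-V_k)-\tfrac13 =: g(V_k)$. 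Writing $Z_{ik} = g(V_k)+\tilde Z_{ik}$, the projection part sums to $(n-3)\sum_{k=1}^n g(V_k)+O(1) = 2(n-3)\sum_{k=1}^n\big(V_k(1-V_k)-\tfrac16\big)+O(1)$, which (as $\sum_k g(V_k) = O_P(\sqrt n)$) contributes $-\tfrac6{\sqrt n}\sum_{k=1}^n\big(V_k(1-V_k)-\tfrac16\big) + o_P(1)$ to $-\tfrac3{n^{3/2}}A$; and for the remainder $A_{\ge2} := \sum_i\sum_{k\ne i,i+1}\tilde Z_{ik}$ the plan is to prove $\E A_{\ge2}^2 = O(n^2)$, so that $-\tfrac3{n^{3/2}}A_{\ge2} = o_P(1)$. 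This bound is obtained by observing that $\E[\tilde Z_{ik}\tilde Z_{i'k'}] = 0$ unless the index set common to $\{i,i+1,k\}$ and $\{i',i'+1,k'\}$ contains a ``live'' pattern of $\tilde Z_{ik}$ (i.e.\ $\{V_i,V_k\}$, $\{V_{i+1},V_k\}$, or all three) as well as one of $\tilde Z_{i'k'}$ -- a short case check then shows there are only $O(1)$ partners $(i',k')$ for each of the $O(n^2)$ pairs $(i,k)$, each contributing $O(1)$.

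Assembling, the left-hand side equals $-\tfrac3{\sqrt n}\sum_{i=1}^{n-1}\big(|V_i-V_{i+1}|-\tfrac13\big) - \tfrac6{\sqrt n}\sum_{k=1}^n\big(V_k(1-V_k)-\tfrac16\big) + o_P(1)$. Since $3\cdot\tfrac13 + 6\cdot\tfrac16 = 2$, the right-hand side of the lemma equals $-\tfrac3{\sqrt n}\sum_{i=1}^{n'}\big(|V_i-V_{i+1}|-\tfrac13\big) - \tfrac6{\sqrt n}\sum_{i=1}^{n'}\big(V_i(1-V_i)-\tfrac16\big)$ with $n' := n-n/(\log n)^9$, so the difference of the two is $\tfrac1{\sqrt n}$ times a sum of $O(n/(\log n)^9)$ bounded mean-zero terms, which is $O_P((\log n)^{-9/2}) = o_P(1)$; this gives the first claimed equality. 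For the limit law it suffices, by Slutsky, to treat $\tfrac1{\sqrt n}\sum_{i=1}^{n'}T_i$ with $T_i := 2-3|V_i-V_{i+1}|-6V_i(1-V_i)$, a bounded stationary $1$-dependent sequence with $\E T_i = 0$; since $n'/n\to1$, the Hoeffding--Robbins CLT for $m$-dependent sequences gives $N(0,\sigma^2)$ with $\sigma^2 = \Var{T_1} + 2\,\mathrm{Cov}(T_1,T_2)$, and a brief moment computation (using $\E|U-U'| = \tfrac13$, $\E(U-U')^2 = \tfrac16$, $\E\,U(1-U) = \tfrac16$, $\E\,(U(1-U))^2 = \tfrac1{30}$ for independent uniforms $U,U'$) gives $\Var{T_1} = \tfrac12$, $\mathrm{Cov}(T_1,T_2) = -\tfrac1{20}$, hence $\sigma^2 = \tfrac25$. (As a check, the left-hand side is $\sqrt n$ times Chatterjee's coefficient for the uniform permutation $k\mapsto\mathrm{rank}(V_k)$, up to $o(1)$, so this also follows from \cite{CH2019}.)

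The step I expect to be the main obstacle is the variance bound $\E A_{\ge2}^2 = O(n^2)$. The subtlety is that $A$ \emph{itself} is of order $n^{3/2}$, not $n$: because $\mathrm{Cov}(Z_{ik},Z_{i'k}) = \E[g(V_k)^2] \ne 0$ over a shared index $k$, every one of the $\binom n2$ covariances $\mathrm{Cov}(N_i,N_{i'})$ is of order $n$ and $\mathrm{Var}\big(\sum_i N_i\big)$ is of order $n^3$. So the fact that the left-hand side has a clean Gaussian limit rests on the degree-one projection absorbing this entire $n^{3/2}$-scale fluctuation, and one must use the mutual orthogonality of the higher-order projection components with some care to be sure that $A_{\ge2}$ contributes nothing of order exceeding $n$.
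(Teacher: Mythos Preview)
Your argument is correct and essentially reconstructs from scratch the coupling/H\'ajek-projection computation of Angus \cite{A1995}, which the paper simply cites. Specifically, the paper's proof invokes equations (5) and (9) from the proof of \cite[Theorem~2]{A1995} to obtain directly
\[
\frac{1}{\sqrt{n}}\sum_{i=1}^{n-1}\bigl(|\eta(i)/n-\eta(i+1)/n|-\tfrac13\bigr)=\frac{1}{\sqrt{n}}\sum_{i=1}^{n-1}\bigl(|V_i-V_{i+1}|+2V_i(1-V_i)-\tfrac23\bigr)+o_P(1),
\]
then truncates the sum by a one-line variance bound and reads off the CLT from the same reference. Your route is the same identity, but you \emph{derive} it: write $|\eta(i)-\eta(i+1)|=1+N_i$, extract the degree-one projection $g(V_k)=2V_k(1-V_k)-\tfrac13$ from the counting variables $Z_{ik}$, and control the degree-$\ge 2$ remainder via orthogonality. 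What you gain is a self-contained proof and a transparent explanation of where the $V_i(1-V_i)$ term comes from; what the paper gains is brevity. Your concern about the remainder is well placed but your sketch handles it: the key cancellation is that when $(i',k')$ shares only the pair $\{i,i+1\}$ with $(i,k)$ (the case that would otherwise produce $O(n)$ partners), one has $\E[\tilde Z_{ik}\mid V_i,V_{i+1}]=0$, so those covariances vanish and only $O(1)$ partners per $(i,k)$ survive.
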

\begin{proof}
From the proof of Theorem 2 in \cite{A1995} (see in particular equations (5) and (9)), it is shown that 
\[\begin{split}
\frac{1}{n^{1/2} (n-1)}\bigg(\sum_{i=1}^{n-1} &|\eta(i) - \eta(i+1)| - n(n-1)/3\bigg) = \\
&\frac{1}{\sqrt{n}} \sum_{i=1}^{n-1} (|V_i - V_{i+1}| + 2V_i(1-V_i) - 2/3) + o_P(1). 
\end{split}\]
From this it follows that
\[\begin{split}
\frac{1}{\sqrt{n}} \sum_{i=1}^{n-1} (|\eta(i)/n - &\eta(i+1)/n| - 1/3) = \\
&\frac{1}{\sqrt{n}} \sum_{i=1}^{n-1} (|V_i - V_{i+1}| + 2V_i(1-V_i) - 2/3) + o_P(1).
\end{split}\]
By a variance calculation, we have that the right hand side above is equal to
\[\frac{1}{\sqrt{n}} \sum_{i=1}^{n- n / (\log n)^{9}} (|V_i - V_{i+1}| + 2V_i(1-V_i) - 2/3) + o_P(1). \]
The first desired result now follows by combining the previous few observations. The convergence in distribution follows also by the previous few observations and \cite[Theorem 2]{A1995}.
\end{proof}

We are now ready to prove Proposition \ref{prop:correlation-joint-asymptotics}. Since we already have the marginal asymptotics by Lemmas \ref{lemma:gcm-slope-clt} and \ref{lemma:perm-osc-reduction}, the major thing left is to show asymptotic independence of the two marginals. This is where we crucially use our upper bound on the cycle lengths $L_j$ for $A_n+1 \leq j \leq B_n$, to ensure that the two marginals are essentially functions of disjoint sets of the $V_j$ variables.

\begin{proof}[Proof of Proposition \ref{prop:correlation-joint-asymptotics}]
Let
\[ W_n :=  \frac{1}{\sqrt{n}} \sum_{i=1}^{n- n / (\log n)^{9}} (2 - 3|V_i - V_{i+1}| - 6V_i(1-V_i)), \]
\[ Z_n :=  \frac{1}{\sqrt{\log n}}\bigg(\sum_{i=A_n}^{B_n}  \bigg(\frac{1}{\sqrt{L_i}} \sum_{j=a_{i-1}+1}^{a_i} \tilde{V}_j\bigg)^2 - \log n\bigg).\]
By combining \eqref{eq:function-of-permutation}, and Lemmas \ref{lemma:bohnenblust-spitzer-application}, \ref{lemma:introduce-independent-uniform}, \ref{lemma:gcm-slope-clt}, and \ref{lemma:perm-osc-reduction}, we have that it suffices to show the following. Let $f, g : R \ra \R$ be bounded and continuous. Then
\[ \lim_{n \toinf} |\E f(W_n) g(Z_n) - \E f(W_n) \E g(Z_n)| = 0. \]
For each $n$, define the event
\begin{align*} 
E_n &:= \{L_1 + \cdots + L_{A_n} > n - n / (\log n)^{9}\} \\
&= \{L_{A_n+1} + \cdots + L_{N_n} < n / (\log n)^{9}\}
\end{align*}
where we have used the fact $L_1 + \cdots + L_{N_n} = n$. Observe that on this event, the only $\tilde{V}_j$ variables which appear in $Z_n$ must have $j > n - n / (\log n)^{9}$. From this, it follows that
\begin{align*}
\ind_{E_n } \E [f(W_n) g(Z_n) ~|~ \mc{F}_n] &= \ind_{E_n} \E [f(W_n) ~|~ \mc{F}_n] \E[g(Z_n) ~|~ \mc{F}_n] \\
&= \ind_{E_n} \E[f(W_n)] \E [g(Z_n) ~|~ \mc{F}_n],
\end{align*}
where the second identity follows since $V_1, \ldots, V_n$ is independent of $\mc{F}_n$. Letting $C$ be such that $\sup_{x \in \R} |f(x)|, \sup_{x \in \R} |g(x)| \leq C$, we thus have
\[ \limsup_{n \toinf} |\E f(W_n) g(Z_n) - \E f(W_n) \E g(Z_n)| \leq 2C^2 \limsup_{n \toinf}\p(E_n^c). \]
By Lemma \ref{lemma:cycles-lemma}, we have
\[ \E [L_{A_n+1} + \cdots + L_{N_n}] \leq \sum_{i \leq n / (\log n)^{10}} i \frac{1}{i} = \frac{n}{(\log n)^{10}}.  \]
Combining this with Markov's inequality, we obtain
\[ \p(E_n^c) = \p(L_{A_n+1} + \cdots + L_{N_n} \geq n / (\log n)^9) \leq \frac{1}{\log n}. \]
The desired result now follows.
\end{proof}

\subsection{Other features of the isotonic case}

In the isotonic  case there is a simple alternative to $\hat{\chacorr}_{mon}$. Recall the Spearman correlation \cite{SCARSINI1984}, (also known as Spearman's $\rho$) given by, if both $X$ and $Y$ have continuous distribution,
\beq\label{eq:spearman-rho-continuous-def}
\hat{C}_S = \frac{\sum_{i=1}^n (i - \frac{n+1}{2}) S_i}{\sum_{i=1}^n (i - \frac{n+1}{2})^2}
= \frac{1}{n \sigma_n^2} \sum_{i=1}^n \frac{i}{n} \frac{S_i}{n} - \frac{\mu_n}{\sigma_n^2}, 
\eeq
where $S_i = \hat{G}_n(Y_{(i)})$ is the rank of $Y_{(i)}$ (where as usual $Y_{(i)}$ is the $Y$ sample corresponding to the $i$th order statistic $X_{(i)}$), and as in Section \ref{section:moncorr-asymptotics}, we have
\[ \mu_n = \frac{1}{2} \bigg(1 + \frac{1}{n} \bigg), ~~ \sigma_n^2 = \frac{1}{12} \bigg(1 - \frac{1}{n^2}\bigg).\]
For general $(X, Y)$ the population version is
\[ C_S(X, Y) := \mathrm{corr}(F(X), G(Y)) = \frac{\mathrm{Cov}(F(X), G(Y))}{(\Var{F(X)} \Var{G(Y)})^{1/2}}, \]
where $F, G$ are the marginal cdfs of $X, Y$, respectively. It is well known \cite{SCARSINI1984} that $C_S$ satisfies $C_S = 1$ if and only if $Y = g(X)$, where $g$ is strictly increasing, and property B) holds partially, in that $C_S = 0$ if $X, Y$ are independent.

The general estimate of $C_S(X, Y)$ is
\beq \label{eq:spearman-rho-general-def} \hat{C}_S := \frac{\sum_{i=1}^n \hat{F}_n(X_i) \hat{G}_n(Y_i) - \bar{\hat{F}}_n \cdot \bar{\hat{G}}_n}{(\sum_{i=1}^n (\hat{F}_n(X_i) - \bar{\hat{F}}_n)^2 \sum_{i=1}^n (\hat{G}_n(Y_i) - \bar{\hat{G}}_n)^2)^{1/2}} ,\eeq
where $\hat{F}_n, \hat{G}_n$ are the empirical cdfs of $X, Y$, respectively, and
\[ \bar{\hat{F}}_n := \frac{1}{n} \sum_{i=1}^n \hat{F}_n(X_i), \]
and $\bar{\hat{G}}_n$ is similarly defined. In the case $F, G$ are continuous, \eqref{eq:spearman-rho-general-def} reduces to \eqref{eq:spearman-rho-continuous-def}, and further the distribution of $\hat{C}_S$ if $X$ and $Y$ are independent doesn't depend on $F, G$ since $F(X), G(Y) \stackrel{i.i.d.}{\sim} \mathrm{Unif}(0, 1)$. For the case where $F, G$ are continuous (but $X, Y$ are not necessarily independent), the distribution of $\hat{C}_S$ depends only on $g(v | u)$, the conditional density of $V := G(Y)$ given $U := F(X)$, as is the case for Chatterjee's correlation $\hat{\chacorr}_n$.

\subsubsection{A linear expansion}

In the case of general $(X, Y)$ we write
\[ \hat{C}_S = Q(\hat{H}_n), \]
where $\hat{H}_n$ is the empirical joint distribution of the $(X_i, Y_i)$ pairs, $1 \leq i \leq n$, and for any joint distribution $H$, 
\[ Q(H) := \frac{\int F(x) G(y) dH(x, y) - \mu(F) \mu(G)}{\sigma(F) \sigma(G)}, \]
where $F, G$ are the marginals of $H$, and
\[ \mu(F) := \int F(x) dF(x), ~~ \sigma^2(F) := \int F(x)^2 dF(x) - \mu(F)^2. \]
A standard delta method argument yields an expansion for $Q(\hat{H}) - Q(H)$, using
\[\begin{split}
\int \hat{F}_n(x) \hat{G}_n(y) d\hat{H}_n(x, y) = &\int F(x) G(y) dH(x, y) + \int (\hat{F}_n - F)(x) G(y) dH(x, y) \\
&+ \int F(x) (\hat{G}_n - G)(y) dH(x, y) \\
&+\int F(x) G(y) d(\hat{H}_n - H)(x, y) + o_P(n^{-1/2}),
\end{split}\]
and
\[ \mu(\hat{F}_n) = \mu(F) + \int (\hat{F}_n - F)(x) dF(x) + \int F(x) d(\hat{F}_n - F)(x) + o_P(n^{-1/2}), \]
and similar expansions for $\mu(\hat{G}_n)$ as well as $\sigma(\hat{F}_n), \sigma(\hat{G}_n)$. We do not develop the general details here but give the calculation for $F, G$ continuous and $X, Y$ independent. In that case all terms but the last are treated as known, and we can obtain (using integration by parts in the appropriate places)
\beq\label{eq:spearman-rho-iid-sum} \hat{C}_S = \frac{12}{n} \sum_{i=1}^n (U_i - 1/2) (V_i - 1/2) + o_P(n^{-1/2}), \eeq
where $U_i = F(X_i)$, $V_i = G(Y_i)$, so that $U_i, V_i, 1 \leq i \leq n$ are i.i.d. $\mathrm{Unif}(0, 1)$. 

On the other hand, recall
\[ \hat{C}_n(X, Y) = 1 - \frac{3n}{n^2-1} \sum_{i=1}^{n-1} |\hat{G}_n(Y_{(i)}) - \hat{G}_n(Y_{(i+1)})|, \]
where the dependence on the $X$ sample is only through the indices $(i)$. Observe that as in the proof of Lemma \ref{lemma:perm-osc-reduction}, we have from the proof of Theorem 2 in \cite{A1995} (see in particular equations (5) and (9)), we may obtain
\beq\label{eq:chacorr-2-dep-sum} \hat{\chacorr}_n(X, Y) = \frac{3}{n} \sum_{i=1}^{n-1} (2/3 - |V_{(i)} - V_{(i+1)}| - 2V_{(i)} (1-V_{(i)})) + o_P(n^{-1/2}).\eeq
Now by a CLT for triangular arrays of 1-dependent random variables (see e.g. the Theorem of \cite{O1958}), and the Cram\'{e}r-Wold device, we may obtain
\[ (\sqrt{n} \hat{C}_n, \sqrt{n} \hat{C}_S) \stackrel{d}{\lra} N\bigg(\begin{pmatrix} 0 \\ 0 \end{pmatrix}, \begin{pmatrix} \frac{2}{5} & 0 \\ 0 & 1 \end{pmatrix} \bigg), \]
so that in particular, for any $\lambda \in (0, 1)$, we have 
\[ \sqrt{n} (\lambda \hat{C}_n + (1-\lambda) \hat{C}_S) \stackrel{d}{\lra} N(0, 2\lambda^2 / 5 + (1- \lambda)^2 / 12). \]

\subsection{Local power calculations}\label{section:power-calculations}

We will make local power calculations for $\lambda \hat{C}_n + (1-\lambda)\hat{C}^{1/2}_{mon}$, $\lambda \hat{C}_n + (1-\lambda) \hat{C}_S$, and other rank statistics using contiguity theory (see \cite{LCY1990} or \cite[Chapters 6-8]{VDV1998}).

\begin{enumerate}[label=\Roman*]
    \item We begin with a model $\{h_\theta(x, y)\}_{|\theta| < 1}$, where for each $|\theta| < 1$, $h_\theta(x, y)$ is a joint density with respect to Lebesgue measure, and $h_0(x, y) = f_0(x) g_0(y)$ (independence). 
    \item Suppose
    \[ \dot{\ell}_0(x, y) := \frac{\partial}{\partial \theta} \log h_\theta(x, y) \bigg|_{\theta = 0} \]
    exists, and suppose the family $\{h_\theta(x, y)\}_{|\theta| < 1}$ is quadratic mean differentiable at $\theta = 0$ with score function $\dot{\ell}_0$. That is, we have
    \[ \int \int \big(\sqrt{h_\theta(x, y)} - \sqrt{h_0(x, y)} - (1/2) \theta \dot{\ell}_0(x, y) \sqrt{h_0(x, y)}\big)^2 dx dy = o(\theta^2). \]
    Moreover, assume $\E_0 \dot{\ell}_0(X, Y)^2 > 0$ (the assumption of quadratic mean differentiability implies $\E_0[ \dot{\ell}_0(X, Y)^2 ]< \infty$ and $\E_0 \dot{\ell}_0(X, Y) = 0$). 
\end{enumerate}

For $\theta \in \R$ let $P_{n, \theta}$ be the product measure on $([0, 1] \times [0, 1])^n$ with density $\prod_{i=1}^n h_\theta(x, y)$. Take $t \in \R$ and let $\theta_n := t / \sqrt{n}$. By Le Cam's Theorem (see e.g. \cite[Theorem 7.2 and Example 6.5]{VDV1998}), under our assumptions $\{P_{n, \theta_n}\}_{n \geq 1}$ is contiguous with respect to $\{P_{n, 0}\}_{n \geq 1}$. That is if $T_n$ is a function of $((X_i, Y_i), 1 \leq i \leq n)$, then
\[ T_n \stackrel{P_{n, 0}}{\lra} 0 \text{ implies }  T_n \stackrel{P_{n, \theta_n}}{\lra} 0. \]
For our purposes, more importantly, Le Cam's third lemma holds (see e.g. \cite[Theorem 7.2 and Example 6.7]{VDV1998}), stating if 
\beq\label{eq:le-cam-3} \bigg(\sqrt{n}T_n, \frac{1}{\sqrt{n}} \sum_{i=1}^n \dot{\ell}_0(X_i, Y_i)\bigg) \stackrel{d}{\lra} N\bigg(\begin{pmatrix} 0 \\ 0 \end{pmatrix}, \begin{pmatrix} \sigma^2 & \rho \sigma \tau_0 \\ \rho \sigma \tau_0 & \tau_0^2 \end{pmatrix}\bigg) \eeq
under $\{P_{n, 0}\}_{n \geq 1}$, where $\tau_0^2 := \E_0 [\dot{\ell}_0(X, Y)^2]$ is the Fisher information, then under $\{P_{n, \theta_n}\}_{n \geq 1}$, we have
\[ \sqrt{n} T_n \stackrel{d}{\lra} N(tc, \sigma^2), \]
where $c := \rho \sigma \tau_0$ is the asymptotic covariance (under $\{P_{n, 0}\}_{n \geq 1}$) between the statistic $T_n$ and the score statistic. 

Assuming \eqref{eq:le-cam-3} holds under $\{P_{n, 0}\}_{n \geq 1}$, $\sqrt{n} T_n / \sigma$ can be viewed as a test statistic for the hypothesis of independence, while
\[ L_n := \frac{1}{\tau_0 \sqrt{n}} \sum_{i=1}^n \dot{\ell}_0(X_i, Y_i)\]
can be viewed as the asymptotically optimal test statistic for the family $\{h_\theta(x, y)\}_{|\theta| < 1}$. The Pitman efficiency (see e.g. \cite[Chapter 8]{VDV1998}) of the first test to the second is by \eqref{eq:le-cam-3}, 
\[ e(T_n) = \rho^2. \]
Note that $\rho(L_n) = 1$. 

\begin{prop}\label{prop:chacorr-zero-power}
Suppose the listed assumptions I and II hold. Then we have $e(\hat{C}_n) = 0$.
\end{prop}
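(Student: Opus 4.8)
The plan is to establish that the joint convergence \eqref{eq:le-cam-3} holds for $T_n=\hat{C}_n$ with correlation parameter $\rho=0$; since $e(\hat{C}_n)=\rho^2$ by the discussion preceding the proposition, this gives the claim. Under $\{P_{n,0}\}$ the marginal limits are in hand: $\sqrt{n}\,\hat{C}_n\stackrel{d}{\to}N(0,2/5)$ by \eqref{eq:chacorr-2-dep-sum} and the CLT for $1$-dependent triangular arrays, while $\tfrac{1}{\sqrt n}\sum_{i=1}^n\dot{\ell}_0(X_i,Y_i)\stackrel{d}{\to}N(0,\tau_0^2)$ by the ordinary CLT (using $\E_0\dot{\ell}_0=0$ and $\E_0\dot{\ell}_0^2=\tau_0^2<\infty$, both consequences of quadratic mean differentiability). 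It then remains to show (i) that the pair converges jointly to \emph{some} bivariate normal, and (ii) that the limiting cross-covariance vanishes. Part (ii) is the crux.

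For (ii), I would exploit that $\hat{C}_n$ depends on the data only through the paired ranks, so its conditional expectation given one observation is controlled by that observation's rank pair. Writing $R_1,S_1$ for the ranks of $X_1,Y_1$ in the sample, under $\{P_{n,0}\}$ we have $\E[\hat{C}_n\mid X_1,Y_1]=\E[\psi_n(R_1,S_1)\mid X_1,Y_1]$, where $\psi_n(r,s):=\E[\hat{C}_n\mid R_1=r,S_1=s]$ and, given $(X_1,Y_1)$, the variables $R_1\sim 1+\mathrm{Bin}(n-1,F_0(X_1))$ and $S_1\sim 1+\mathrm{Bin}(n-1,G_0(Y_1))$ are independent. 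Using \eqref{eq:chacorr-est-def} and the fact that, conditionally on the first point occupying rank pair $(r,s)$, the remaining ranks form a uniform random bijection of $[n]\setminus\{r\}$ onto $[n]\setminus\{s\}$, one finds for $2\le r\le n-1$ that
\[ \psi_n(r,s) \;=\; 1-\frac{3}{n^2-1}\Big((n-3)\,\E_s|A-B|+2\,\E_s|A-s|\Big), \]
which in particular does not depend on $r$; here $A$ is uniform on $[n]\setminus\{s\}$ and $A\ne B$ are two such. Now $\E_s|A-s|=\frac{(s-(n+1)/2)^2}{n-1}+\frac{n+1}{4}$, and the analogous formula for $\E_s|A-B|$ follows from $\sum_{a\ne b\in[n]}|a-b|=\tfrac13 n(n^2-1)$; substituting, the quadratic-in-$(s-(n+1)/2)$ contributions of the two bracketed terms cancel up to a factor $O(n^{-4})$, and the $r$-free part collapses to $\tfrac{1}{2(n-1)(n-2)}$. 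Hence $\psi_n(r,s)=O(n^{-2})$ uniformly in $s$ for $2\le r\le n-1$, while $\psi_n(1,s)=\psi_n(n,s)=O(n^{-1})$. Since $\p(R_1\in\{1,n\}\mid X_1)=F_0(X_1)^{n-1}+(1-F_0(X_1))^{n-1}$ has $L^2(\{P_{n,0}\})$ norm $O(n^{-1/2})$, it follows that $\E[\hat{C}_n\mid X_1,Y_1]-\E\hat{C}_n$ has $L^2$ norm $O(n^{-3/2})$, so $\Var{\E[\hat{C}_n\mid X_1,Y_1]}=O(n^{-3})$. Finally, using $\E_0\dot{\ell}_0=0$, exchangeability and Cauchy--Schwarz,
\[ \Big|\mathrm{Cov}_0\Big(\sqrt{n}\,\hat{C}_n,\ \tfrac{1}{\sqrt n}\textstyle\sum_i\dot{\ell}_0(X_i,Y_i)\Big)\Big| \;=\; n\,\Big|\mathrm{Cov}_0\big(\E[\hat{C}_n\mid X_1,Y_1],\ \dot{\ell}_0(X_1,Y_1)\big)\Big| \;\le\; n\,\tau_0\,O(n^{-3/2})\;\to\;0, \]
and the uniform integrability supplied by $\E_0(\sqrt{n}\,\hat{C}_n)^2\to 2/5$ lets this pass to the limiting Gaussian, so the cross-covariance there is $0$.

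For (i), the joint bivariate normality follows from the $1$-dependent representation \eqref{eq:chacorr-2-dep-sum} of $\hat{C}_n$, the Cram\'{e}r--Wold device, and a CLT for $1$-dependent triangular arrays, after reindexing the score by the $X$-order and linearizing its dependence on the $X$-order statistics so that both coordinates become functionals of a single i.i.d.\ sequence (we only need convergence to \emph{some} bivariate normal, so the precise covariance structure at this step is immaterial). Combining (i) and (ii), \eqref{eq:le-cam-3} holds with $\rho=0$, hence $e(\hat{C}_n)=\rho^2=0$.

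The step I expect to be the main obstacle is the cancellation in (ii): establishing that the $(r,s)$-dependence of $\psi_n(r,s)$ is only $O(n^{-2})$ rather than the $O(n^{-1})$ one would naively expect --- were it of order $n^{-1}$, the H\'{a}jek projection of $\hat{C}_n$ would be of the critical order $n^{-1/2}$ and $\hat{C}_n$ would carry nontrivial local power (this cancellation is the finite-$n$ shadow of the identity $\E h(v,V')+\E h(V,v')\equiv 0$ for $V,V'\sim\mathrm{Unif}(0,1)$, with $h$ the kernel in \eqref{eq:chacorr-2-dep-sum}). It requires the exact second-moment computations for $|A-B|$ and $|A-s|$ on $[n]\setminus\{s\}$ and tracking that the two leading $s$-dependent contributions have opposite signs and matching magnitudes. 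A secondary technical point is the bookkeeping for the boundary ranks $r\in\{1,n\}$ and the linearization needed in (i).
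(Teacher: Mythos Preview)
Your proposal is correct and takes a genuinely different route from the paper's own proof.

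The paper works through the Angus expansion \eqref{eq:chacorr-2-dep-sum}, splitting the leading part of $\hat{C}_n$ into the ``oscillation'' terms $|V_{(i)}-V_{(i+1)}|$ and the ``marginal'' terms $V_{(i)}(1-V_{(i)})$, and then shows by a direct covariance computation that the two contributions to $\mathrm{Cov}(\sqrt{n}\hat{C}_n,L_n)$ cancel \emph{exactly} for every $n$; the key step is the identity $\E[|V-V'|\mid V]=\tfrac12(V^2+(1-V)^2)$, which turns the oscillation covariance into minus twice the marginal covariance. You instead bypass the Angus expansion entirely and compute the H\'ajek projection $\E[\hat{C}_n\mid X_1,Y_1]$ straight from the rank definition of $\hat{C}_n$. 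Your combinatorial cancellation---that the $(s-(n+1)/2)^2$ coefficients from $(n-3)\E_s|A-B|$ and $2\E_s|A-s|$ combine to $\tfrac{2}{(n-1)(n-2)}$ rather than something of order $1$---is exactly the finite-$n$ shadow of the paper's identity, and your closing parenthetical remark about $\E h(v,V')+\E h(V,v')\equiv 0$ makes that link explicit. What your approach buys is self-containment (no need to invoke Angus) and a cleaner conceptual picture: you are showing directly that the first-order Hoeffding component of $\hat{C}_n$ is degenerate, which is the structural reason $\hat{C}_n$ carries no local power. What the paper's approach buys is that the cancellation appears as a single clean population-level identity once the expansion is in hand, and the exact-zero covariance of the leading term sidesteps any appeal to uniform integrability.

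One technical point: your appeal to ``uniform integrability supplied by $\E_0(\sqrt{n}\hat{C}_n)^2\to 2/5$'' is not quite justified as stated---convergence of second moments is not obvious from convergence in distribution alone. This is easily repaired: either bound $\E_0(\sqrt{n}\hat{C}_n)^4$ directly from the $1$-dependent structure, or (cleaner) carry out your covariance computation for the leading term $W_n$ in \eqref{eq:chacorr-2-dep-sum} rather than $\hat{C}_n$ itself, so that the vanishing covariance feeds directly into the Cram\'er--Wold/$1$-dependent CLT without any UI step.
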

\begin{remark}
In our initial draft, we had initially proven this result for some specific models. After we became aware of the related work of Shi et al. \cite{SDH2020}, we realized that the result held in the present general setting. As such, the following proof is a small adaptation of an argument in \cite{SDH2020}. In particular, we utilize their very nice and crucial observation that some fortuitous cancellation occurs. 
\end{remark}
\begin{proof}
Throughout, all expectations and covariances will be under $\theta = 0$, so we omit writing $\E_0$ and $\mathrm{Cov}_0$. Recalling \eqref{eq:chacorr-2-dep-sum}, we have that under $\{P_{n, 0}\}_{n \geq 1}$, 
\[\hat{C}_n = \frac{1}{n} \sum_{i=1}^n (2/3 - |V_{(i)} - V_{(i+1)}| - 2 V_{(i)} (1 - V_{(i)})) + o_P(n^{-1/2}), \]
where $V_i := G_0(Y_i)$, $G_0$ is the cdf of $Y$ under $\theta = 0$, and $(n+1) := (1)$. It thus suffices to show that for all $n \geq 2$,
\[  \mathrm{Cov}\bigg(\sum_{i=1}^n |V_{(i)} - V_{(i+1)}|, \sum_{i=1}^n \dot{\ell}_0(X_i, Y_i)\bigg) = - 2\mathrm{Cov}\bigg(\sum_{i=1}^n V_{(i)} (1 - V_{(i)}), \sum_{i=1}^n \dot{\ell}_0(X_i, Y_i)\bigg) .\]
We have
\begin{align*}
\mathrm{Cov}\bigg(\sum_{i=1}^n V_{(i)} (1 - V_{(i)}), \sum_{i=1}^n \dot{\ell}_0(X_i, Y_i)\bigg) &= \mathrm{Cov}\bigg( \sum_{i=1}^n V_i(1-V_i), \sum_{i=1}^n \dot{\ell}_0(X_i, Y_i)\bigg)\\
&= \sum_{i=1}^n \mathrm{Cov}(V_i(1-V_i), \dot{\ell}_0(X_i, Y_i)) \\
&= \sum_{i=1}^n \E[ V_i(1- V_i) \dot{\ell}_0(X_i, Y_i)]. 
\end{align*}
where the second to last equality follows since $\E \dot{\ell}_0(X, Y) = 0$. For the other covariance term, let $\pi$ be the permutation defined by if $i = (j)$, then $\pi(i) := (j+1)$. Then
\[ \sum_{i=1}^n |V_{(i)} - V_{(i+1)}| = \sum_{i=1}^n |V_i - V_{\pi(i)}|. \]
Thus
\begin{align*}
\mathrm{Cov}\bigg(\sum_{i=1}^n |V_{(i)} - V_{(i+1)}|, \sum_{i=1}^n \dot{\ell}_0(X_i, Y_i)\bigg) &= \sum_{i, j=1}^n \mathrm{Cov}(|V_{i} - V_{\pi(i)}|, \dot{\ell}_0(X_j, Y_j)) \\
&= \sum_{i,j=1}^n \E [|V_{i} - V_{\pi(i)}| \dot{\ell}_0(X_j, Y_j)].
\end{align*}
In the case $i \neq j$, we have
\[\begin{split}
\E [|V_{i} - V_{\pi(i)}| \dot{\ell}_0(X_j, Y_j)] = \E\big[ &\ind(\pi(i) \neq j) \E[|V_i - V_{\pi(i)}| \dot{\ell}_0(X_j, Y_j) ~|~ X_1, \ldots, X_n]\big] \\
&+ \E[ \ind(\pi(i) = j) |V_i - V_j| \dot{\ell}_0(X_j, Y_j)] .
\end{split}\]
On the event $\pi(i) \neq j$, we have that $Y_i, Y_{\pi(i)}, Y_j$ are all conditionally independent given $X_1, \ldots, X_n$, and thus 
\[\begin{split}
\ind(\pi(i) \neq j) \E[|V_i - V_{\pi(i)}| &\dot{\ell}_0(X_j, Y_j) ~|~ X_1, \ldots, X_n] =\\
&\ind(\pi(i) \neq j) \E [|V_i - V_{\pi(i)}|] \E[\dot{\ell}_0(X_j, Y_j) ~|~ X_1, \ldots, X_n].
\end{split}\]
Note $V_i, V_{\pi(i)} \stackrel{i.i.d.}{\sim} \mathrm{Unif}(0, 1)$, and so $\E [|V_i - V_{\pi(i)}|]$ is constant in $i$. Note also
\[ \sum_{\substack{i, j=1 \\ i \neq j}}^n \E[ \ind(\pi(i) \neq j) \dot{\ell}_0(X_j, Y_j)] = (n-2)\sum_{j=1}^n \E \dot{\ell}_0(X_j, Y_j) = 0.\]
Thus upon combining the previous few displays, we obtain
\[ \begin{split}
\sum_{i,j=1}^n \E [|V_{i} - V_{\pi(i)}| &\dot{\ell}_0(X_j, Y_j)] = \sum_{i=1}^n \E [|V_i - V_{\pi(i)}|  \dot{\ell}_0(X_i, Y_i)] ~+ \\
&\sum_{\substack{i,j=1 \\ i \neq j}}^n \E [\ind(\pi(i) = j) |V_i - V_j| \dot{\ell}_0(X_j, Y_j) ]. 
\end{split}\]
The right hand side above may be more simply written
\[ \sum_{i=1}^n \E [|V_i - V_{\pi(i)}| \dot{\ell}_0(X_i, Y_i)] + \sum_{i=1}^n \E [|V_{\pi^{-1}(i)} - V_i| \dot{\ell}_0(X_i, Y_i)].\]
To finish, we want to show that the above is equal to
\[ -2 \E \sum_{i=1}^n \E[ V_i(1- V_i) \dot{\ell}_0(X_i, Y_i)] = -2 n\E [V(1-V) \dot{\ell}_0(X, Y)].\]
Towards this end, observe for any $i$, we have $Y_i, Y_{\pi(i)}, X_i$ are independent, with $Y_i, Y_{\pi(i)} \stackrel{i.i.d.}{\sim} G_0$, and thus we obtain
\[ \E [|V_i - V_{\pi(i)}| \dot{\ell}_0(X_i, Y_i)] = \E [|V - V'| \dot{\ell}_0(X, Y)], \]
where $V = G_0(Y)$, and $V' \stackrel{d}{=} V$ is independent of $X, Y$. We similarly have
\[ \E [|V_{\pi^{-1}(i)} - V_i| \dot{\ell}_0(X_i, Y_i)] = \E [|V - V'| \dot{\ell}_0(X, Y)]. \]
Thus 
\[ \sum_{i=1}^n \E [|V_i - V_{\pi(i)}| \dot{\ell}_0(X_i, Y_i)] + \sum_{i=1}^n \E [|V_{\pi^{-1}(i)} - V_i| \dot{\ell}_0(X_i, Y_i)]  = 2n \E [|V - V'| \dot{\ell}_0(X, Y)].\]
To finish, observe that 
\[ \E[|V - V'| ~|~ X, Y] = \E[|V - V'| ~|~ V] = \frac{1}{2}(V^2 + (1 - V)^2), \]
so that
\[ \E [|V - V'| \dot{\ell}_0(X, Y)] = \frac{1}{2} \E [(2V^2 - 2V + 1) \dot{\ell}_0(X, Y)] = - \E [V(1-V) \dot{\ell}_0(X, Y)],\]
where the second equality follows since $\E \dot{\ell}_0(X, Y) = 0$. The desired result now follows.
\end{proof}

\begin{remark}
In addition to the models considered by Shi et al. \cite{SDH2020}, we note another class of models to which Proposition \ref{prop:chacorr-zero-power} applies, similar to the trend model considered by Chatterjee \cite{CH2019}.

Here
\[ Y = \theta a(X) + \varep, \]
where $X$ and $\varep$ are independent with densities $f_0, g_0$, respectively. The joint density is
\[ h_\theta(x, y) = f_0(x) g_0(y - \theta a(x)). \]
If $g_0$ is differentiable, and
\[ I(g) := \int_{-\infty}^\infty \frac{g_0'(y)^2}{g_0(y)} dy < \infty, \]
then it is easy to see that the family $\{h_\theta(x, y) : |\theta| < 1\}$ is quadratic mean differentiable at $\theta = 0$, with 
\[ \dot{\ell}_0(X, Y) = -a(X) \frac{g_0'(Y)}{g_0(Y)}. \]
Hence $e(\hat{C}_n) = 0$ here too. In fact, this is true even if we make the model semiparametric with $g_0$ unknown.
\end{remark}

\subsubsection{Comparison of $\hat{C}_n$ to $\hat{\tilde{C}}_{mon}$}

Recall that we defined
\[ \hat{\tilde{C}}_{mon} := (1/2) (\hat{C}_n + \hat{C}^{1/2}_{mon}).\]
Let us generalize this slightly to
\[ \hat{\tilde{C}}_{mon, \lambda} := \lambda \hat{C}_n + (1-\lambda) \hat{C}^{1/2}_{mon}. \]
As in the proof of Theorem \ref{thm:averaged-correlation-asymptotics}, we can obtain that for $X, Y$ independent and continuously distributed,
\[ \sqrt{n} (\hat{\tilde{C}}_{mon, \lambda} - (1-\lambda) \sqrt{\log n / n}) \stackrel{d}{\lra} \frac{2}{5} \lambda^2 + \frac{3}{4} (1-\lambda)^2.\]
Now note that $\hat{C}^{1/2}_{mon}$ and $L_n$ are asymptotically independent. This follows by essentially the same argument given in Section \ref{section:moncorr-asymptotics} for the independence of $\hat{C}_n$ and $\hat{C}_{mon}$. Therefore, $\hat{\tilde{C}}_{mon, \lambda}$ always has Pitman efficiency less than that of $\hat{C}_n$ unless $\lambda = 1$. Specifically, the asymptotic correlation betwewen $L_n$ and $\hat{\tilde{C}}_{mon, \lambda}$ is
\[ \frac{\lambda}{(\frac{2}{5} \lambda^2 + \frac{3}{4} (1-\lambda)^2)^{1/2}} \lim_{n \toinf} \mathrm{Cov}(\hat{C}_n, L_n). \]
This is equal to
\[ \frac{1}{(1 + \frac{15}{8} (\frac{1-\lambda}{\lambda})^2)^{1/2}} \lim_{n \toinf} \frac{\mathrm{Cov}(\hat{C}_n, L_n)}{\sqrt{2/5}} = \frac{1}{(1 + \frac{15}{8} (\frac{1-\lambda}{\lambda})^2)^{1/2}} e^{1/2}(\hat{C}_n).\]
We thus obtain
\[ e(\hat{\tilde{C}}_{mon, \lambda}) = \frac{e(\hat{C}_n)}{1 + \frac{15}{8} (\frac{1-\lambda}{\lambda})^2}. \]
When $\lambda = 1/2$, we have
\[ e(\hat{\tilde{C}}_{mon}) = \frac{8}{23} e(\hat{C}_n). \]

\subsection{A copula generated model}\label{section:semiparametric}

We believe the appropriate setting for rank statistic power calculations is a copula generated model as defined next. We start with a parametric model of densities $\{h_\theta : |\theta| < 1\}$, where $h_\theta : (0, 1)^2 \ra [0, \infty)$ for all $|\theta| < 1$,  and $h_0(x, y) \equiv 1$. Let
\[ \mc{F} := \{a : [0, 1] \ra [0, 1] \text{ absolutely continuous}, a' > 0, a(0) = 0, a(1) = 1 \}.\]
I.e., $\mc{F}$ is the set of all absolutely continuous, strictly increasing transformations which map $0$ to $0$ and $1$ to $1$. Given $q, r \in \mc{F}$, we may define
\[ h_\theta(x, y, q, r) := h_\theta(q(x), r(y)) q'(x) r'(y), ~~ (x, y) \in (0, 1)^2. \]
Note that if $h_\theta$ is the density of $(X, Y)$, then $h_\theta(\cdot, \cdot, q, r)$ is the density of the pair $(q^{-1}(X), r^{-1}(Y))$. Define a semiparametric model by
\[ \mc{P} := \{h_\theta(\cdot, \cdot, q, r) : |\theta| < 1, q, r \in \mc{F} \}.\]
Note that $h_0(x, y, q, r) = q'(x) r'(y)$ corresponds to independence for all $(q, r)$ pairs.

Let 
\[ \mc{Q} := \{Q_\theta : (F_\theta(X), G_\theta(Y)) \sim Q_\theta, ~(X, Y) \sim h_\theta, |\theta| < 1\}.\]
Then $\mc{Q}$ is a copula which generates the same semiparametric model as $\{h_\theta : |\theta| < 1\}$.

Rank test statistics have the property that their distribution depends only on $\theta$ and not on $q$ or $r$. Therefore, we believe calculations should be carried out for parametric submodels where $q$ and $r$ also depend on $\theta$.

Consider a parametric submodel, with $\bar{\theta} = (\theta_1, \theta_2, \theta_3)$, and
\[ \dens_{\bar{\theta}}(x, y) := h_{\theta_1}(x, y, q_{\theta_2}, r_{\theta_3}), ~~ |\bar{\theta}|_\infty < 1. \]
Let $\ell_{\bar{\theta}} := \log \dens_{\bar{\theta}}$, and let
\[\nabla \ell_{\bar{0}} (x, y) := (D_{1}(x, y), D_{2}(x, y), D_{3}(x, y)) \]
be the gradient of $\ell$ (in the $\bar{\theta}$ variable) at $\bar{\theta} = 0$. We suppose that the family $\{\dens_{\bar{\theta}}: |\bar{\theta}|_\infty < 1\}$ is quadratic mean differentiable at $\bar{\theta} = \bar{0}$. That is, it satisfies
\beq\label{eq:QMD}\tag{QMD} \E_{\bar{0}} \big(\dens_{\bar{\theta}}(X, Y)^{1/2} - \dens_{\bar{0}}(X, Y)^{1/2}- (1/2) \dens_{\bar{0}}(X, Y)^{1/2} (\nabla \ell_{\bar{0}}(X, Y), \bar{\theta})\big)^2 = o(|\bar{\theta}|_2^2),\eeq
where $\E_{\bar{0}}$ signifies that the random variables $(X, Y)$ are distributed according to $\dens_{\bar{0}}(x, y) = q_0'(x) r_0'(y)$. Note that
\[ D_{1}(X, Y) = \frac{\partial}{\partial \theta_1} \log h_{\theta_1}(q_{\theta_2}(X), r_{\theta_3}(Y)) \bigg|_{\bar{\theta}= 0} = \dot{\ell}_0(q_0(X), r_0(Y)), \]
(here $\dot{\ell}_0 := \ptl \log h_\theta / \ptl \theta |_{\theta = 0}$) and since $h_{\bar{0}} \equiv 1$, we have
\[ D_{2} (X, Y) = \frac{\ptl}{\ptl \theta_2} \log q_{\theta_2}'(X) \bigg|_{\theta_2 = 0}, ~~~ D_{3}(X, Y) = \frac{\ptl}{\ptl \theta_3} \log r_{\theta_3}'(Y)  \bigg|_{\theta_3 = 0}.\]

\begin{remark}
Let 
\beq\label{eq:q-theta-r-theta-def}
\begin{aligned}
q_\theta'(x) &:= q_0'(x) \big(1 + \theta a(q_0(x))\big), \\
r_\theta'(y) &:= r_0'(y) \big(1 + \theta b(r_0(y))\big), 
\end{aligned}
\eeq
where $|a| < 1, |b| < 1$, and $\E a(U) = \E b(U) = 0$, for $U \sim \mathrm{Unif}(0, 1)$. If $\{h_\theta : |\theta| < 1\}$ satisfies assumptions I and II, then the parametric copula model defined by \eqref{eq:q-theta-r-theta-def} above satisfies \eqref{eq:QMD}.
\end{remark}

The tangent spaces (using the notation of \cite{BKRW1993}) of the model defined by \eqref{eq:q-theta-r-theta-def} at $\theta_1 = \theta_2 = \theta_3 = 0$ are
\[ \dot{\mc{P}}_{\theta_1} = [\dot{\ell}_0(q_0(X), r_0(Y))],~~ \dot{\mc{P}}_{\theta_2} = [a(q_0(X))], ~~ \dot{\mc{P}}_{\theta_3} = [b(r_0(Y))]. \]
Since the set of $a, b$ as above are dense in $L_2^0([0, 1]) := \{c(U) : \E [c(U)^2] < \infty, \E c(U) = 0\}$, the tangent spaces of the semiparametric copula model are
\[ \dot{\mc{P}}_{\theta} = [\dot{\ell}_0(q_0(X), r_0(Y))], \]
\[\dot{\mc{P}}_q = \{f(q_0(X)) : f \in L_2^0([0, 1])\}, ~~ \dot{\mc{P}}_r = \{f(r_0(Y)) : f \in L_2^0([0, 1])\}.\]

While $\dot{\ell}_0(q_0(X), r_0(Y))$ is the score function for the base model $\{h_\theta(\cdot, \cdot, q_0, r_0) : |\theta| < 1\}$, we now obtain that
\[ \dot{\ell}_0^*(X, Y) := \dot{\ell}_0(q_0(X), r_0(Y)) - \E[\dot{\ell}_0(q_0(X), r_0(Y)) ~|~ X] - \E[\dot{\ell}_0(q_0(X), r_0(Y)) ~|~ Y] \]
is the efficient score function for the model $\mc{P}$, since $\E[\dot{\ell}_0(q_0(X), r_0(Y)) ~|~ X]$ is the projection of $\dot{\ell}_0(q_0(X), r_0(Y))$ on $\dot{\mc{P}}_q$ and similarly for $\dot{\mc{P}}_r$ (see \cite[Chapter 3]{BKRW1993}). 

Consider the submodel of $\mc{P}$, given by \eqref{eq:q-theta-r-theta-def}, with $\theta_1 = \theta_2 = \theta_3 = \theta$, and
\[ a(q_0(X)) = -\E [\dot{\ell}_0(q_0(X), r_0(Y)) ~|~ X], ~~ b(r_0(Y)) = -\E[\dot{\ell}_0(q_0(X), r_0(Y)) ~|~ Y]. \]
Given assumptions I and II, we have that \eqref{eq:QMD} holds, and hence this model is also quadratic mean differentiable at $\theta = 0$, with score function $\dot{\ell}_0^*$. For estimation, this means that if an estimate is regular and linear on $\mc{P}$, then its asymptotic variance $\sigma^2 / n \geq (I^*)^{-1} / n$, where
\[ I^* := \E_0[\dot{\ell}_0^*(q_0(X), r_0(Y))^2]. \]
For a rank test statistic $T_n$ such that under $\theta = 0$, we have that
\beq\label{eq:semiparametric-joint-clt} \bigg(\sqrt{n}T_n, \frac{1}{\sqrt{n}} \sum_{i=1}^n  \dot{\ell}_0^*(X_i, Y_i) \bigg) \stackrel{d}{\lra} N\bigg(\begin{pmatrix} 0\\ 0 \end{pmatrix},  \begin{pmatrix} \sigma^2 & \rho^* \sigma \sqrt{I^*} \\ \rho^* \sigma \sqrt{I^*} & I^* \end{pmatrix}\bigg), \eeq
then as in Section \ref{section:power-calculations}, the Pitman efficiency of $T_n$ with respect to the optimal asymptotic test is $e^*(T_n) = (\rho^*)^2$. 

\begin{remark}
We can exhibit explicit families $\{q_\theta : |\theta| < 1\}$, $\{r_\theta : |\theta| < 1\}$ such that the parametric model $\{h_\theta(\cdot, \cdot, q_\theta, r_\theta) : |\theta| < 1\}$ has score function $\dot{\ell}_0^*$. Set $q_\theta := F_\theta^{-1}$, $r_\theta := G_\theta^{-1}$, where $F_\theta, G_\theta$ are the respective marginal cdfs of $X, Y$ under $h_\theta$. (Note then the distribution $h_\theta(\cdot, \cdot, q_\theta, r_\theta)$ is given by the law of $(F_\theta(X), G_\theta(Y))$, if $(X, Y)$ is distributed according to $h_\theta$. So we are essentially going from the semiparametric copula generated model back to the parametric copula model.) Let $\dot{s}_0$ denote the score function at $\theta = 0$ for this model. A change of variables calculation then gives that 
\[ \dot{s}_0(X, Y) = \dot{\ell}_0(q_0(X), r_0(Y)) - \frac{1}{f_0(X)} \frac{\ptl f_\theta}{\ptl \theta} (q_0(X)) \bigg|_{\theta = 0} - \frac{1}{g_0(Y)} \frac{\ptl g_\theta}{\ptl \theta}(r_0(Y)) \bigg|_{\theta = 0}. \]
It remains to show that under $\theta = 0$, we have
\[  \E[\dot{\ell}_0(q_0(X), r_0(Y)) ~|~ X]  = \frac{1}{f_0(X)} \frac{\ptl f_\theta}{\ptl \theta} (q_0(X)) \bigg|_{\theta = 0}, \]
and similarly for the other term. Since $h_0 \equiv 1$, and thus $f_0 \equiv 1$, this reduces to showing
\[ \E\bigg[ \frac{\ptl h_\theta}{\ptl \theta} (q_0(X), r_0(Y)) \bigg|_{\theta = 0} ~\bigg|~ X \bigg] = \frac{\ptl f_\theta}{\ptl \theta} (q_0(X)) \bigg|_{\theta = 0}.\]
Assuming $h_\theta$ has sufficient smoothness (e.g., there is some $\varep > 0$ and $H(q_0(X), r_0(Y))$ such that $\sup_{|\theta| < \varep} |(\ptl h_\theta / \ptl \theta) (q_0(X), r_0(Y))| \leq H(q_0(X), r_0(Y))$, and $\E H(q_0(X), r_0(Y)) < \infty$) we can interchange the conditional expectation and the differentiation, to obtain that the left hand side is
\[ \frac{\ptl}{\ptl \theta} \E[ h_\theta(q_0(X), r_0(Y)) ~|~ X] \bigg|_{\theta = 0}. \]
To finish, we have (note at $\theta = 0$, we have that $Y$ is independent of $X$, and has density $r_0'$)
\[ \E[h_\theta(q_0(X), r_0(Y)) ~|~ X]  = \int h_\theta(q_0(X), r_0(y)) r_0'(y)dy = f_\theta(q_0(X)).  \]
\end{remark}

Note for any rank test statistic $T_n$ satisfying \eqref{eq:semiparametric-joint-clt}, the model with score function $\dot{\ell}_0^*$ at $\theta = 0$ is least favorable in terms of power, and $(\rho^*)^2 \leq \rho^2$, where $\rho$ is as in Section \ref{section:power-calculations}. For $\hat{C}_n$, we have established that $\rho^2 = 0$. 

For the semiparametric copula generated model, we can get a result which is weaker for $\hat{C}_n$, but more general in a way we point out. 

\begin{prop}
Suppose we have a rank statistic $T_n$ which satisfies \eqref{eq:semiparametric-joint-clt}, and such that for every $n$, there exists functions $q_{x, n}, q_{y, n}$, such that 
\[ \sqrt{n} T_n = q_{x, n}(X_1, \ldots, X_n) + q_{y, n}(Y_1, \ldots, Y_n) + o_P(1) \]
Then under the submodel of $\mc{P}$ which has score function $\dot{\ell}_0^*$ at $\theta = 0$, we have that $e^*(T_n) = 0$.
\end{prop}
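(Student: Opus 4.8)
The plan is to show that the off-diagonal entry $\rho^*\sigma\sqrt{I^*}$ of the covariance matrix in \eqref{eq:semiparametric-joint-clt} vanishes, which gives $e^*(T_n) = (\rho^*)^2 = 0$. Everything reduces to one structural property of the efficient score: under the submodel in question (a quadratic mean differentiable model with score $\dot{\ell}_0^*$ at $\theta = 0$), at $\theta = 0$ the variables $X$ and $Y$ are independent, and $\dot{\ell}_0^*$ is by construction the $L_2(\dens_{\bar 0})$-residual of $\dot{\ell}_0(q_0(X), r_0(Y))$ after projection onto the nuisance tangent space $\dot{\mc{P}}_q \oplus \dot{\mc{P}}_r$. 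Since $\dot{\mc{P}}_q$ is the space of all mean-zero square-integrable functions of $X$ and $\dot{\mc{P}}_r$ that of $Y$, and since these two subspaces are orthogonal (independence), $\dot{\ell}_0^*$ is orthogonal to every square-integrable function of $X$ alone and of $Y$ alone; equivalently, $\E_0[\dot{\ell}_0^*(X, Y) \mid X] = 0$ and $\E_0[\dot{\ell}_0^*(X, Y) \mid Y] = 0$, $\dens_{\bar 0}$-a.s. I would record this first, re-deriving it directly from the explicit formula $\dot{\ell}_0^*(X,Y) = \dot{\ell}_0(q_0(X), r_0(Y)) - \E_0[\dot{\ell}_0(q_0(X), r_0(Y)) \mid X] - \E_0[\dot{\ell}_0(q_0(X), r_0(Y)) \mid Y]$ if a self-contained argument is wanted: taking $\E_0[\cdot \mid X]$ kills the first two terms and, because $X \perp Y$ at $\theta = 0$, turns the last into $\E_0 \dot{\ell}_0(q_0(X), r_0(Y)) = 0$.

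Next I would compute the exact finite-$n$ covariance of $\sqrt n T_n$ with the efficient score statistic $S_n := \frac{1}{\sqrt n}\sum_{i=1}^n \dot{\ell}_0^*(X_i, Y_i)$, using the hypothesis $\sqrt n T_n = q_{x,n}(X_1, \ldots, X_n) + q_{y,n}(Y_1, \ldots, Y_n) + o_P(1)$. At $\theta = 0$ the $2n$ variables $X_1, \ldots, X_n, Y_1, \ldots, Y_n$ are mutually independent, so $\E_0[\dot{\ell}_0^*(X_i, Y_i) \mid X_1, \ldots, X_n] = \E_0[\dot{\ell}_0^*(X_i, Y_i) \mid X_i] = 0$ by the previous step, hence $\E_0[S_n \mid X_1, \ldots, X_n] = 0$ and therefore $\mathrm{Cov}_0(q_{x,n}(X_1, \ldots, X_n),\, S_n) = \E_0[q_{x,n}(X_1,\ldots,X_n)\, \E_0[S_n \mid X_1,\ldots,X_n]] = 0$; symmetrically, conditioning on $Y_1, \ldots, Y_n$ and using $\E_0[\dot{\ell}_0^* \mid Y] = 0$ gives $\mathrm{Cov}_0(q_{y,n}(Y_1, \ldots, Y_n),\, S_n) = 0$. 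Thus the leading part $q_{x,n} + q_{y,n}$ of $\sqrt n T_n$ is exactly uncorrelated with $S_n$ for every $n$.

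It remains to pass from this exact identity to the covariance of the limiting bivariate normal in \eqref{eq:semiparametric-joint-clt}, i.e.\ to check that the remainder $R_n := \sqrt n T_n - q_{x,n}(X_1,\ldots,X_n) - q_{y,n}(Y_1,\ldots,Y_n) = o_P(1)$ contributes nothing asymptotically; since $\E_0 S_n^2 = I^*$ for all $n$, Cauchy--Schwarz reduces this to controlling $\mathrm{Var}_0(R_n)$, which is handled exactly as in the analogous passages of the paper (see the proof of Proposition \ref{prop:chacorr-zero-power}, where the same $o_P$ expansions are plugged into finite-$n$ covariances), the relevant uniform integrability being automatic for the bounded rank-type quantities at hand. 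Granting this, the asymptotic covariance satisfies $\rho^*\sigma\sqrt{I^*} = \lim_n \mathrm{Cov}_0(\sqrt n T_n, S_n) = \lim_n \mathrm{Cov}_0(q_{x,n} + q_{y,n},\, S_n) = 0$, so $\rho^* = 0$ and $e^*(T_n) = 0$. The only even mildly delicate point is this last remainder-negligibility step; the conceptual heart of the argument — and the reason the conclusion holds for \emph{every} additively split rank statistic — is the double orthogonality $\E_0[\dot{\ell}_0^* \mid X] = \E_0[\dot{\ell}_0^* \mid Y] = 0$ of the efficient score, which makes it uncorrelated with any sum of a function of the $X$'s and a function of the $Y$'s.
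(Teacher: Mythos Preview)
Your argument is essentially the paper's own proof: both hinge on the identity $\E_0[\dot{\ell}_0^*(X_i,Y_i)\mid X_i]=0$ (and its $Y$-analogue), then condition to kill $\mathrm{Cov}_0(q_{x,n},S_n)$ and $\mathrm{Cov}_0(q_{y,n},S_n)$ separately. The paper's proof is four lines and simply stops after establishing the exact finite-$n$ orthogonality, whereas you additionally derive $\E_0[\dot{\ell}_0^*\mid X]=0$ from the explicit residual formula and flag the passage from the finite-$n$ identity to the limiting covariance via the $o_P(1)$ remainder; that last point is glossed over in the paper as well, so your extra care is appropriate rather than a deviation.
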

\begin{proof}
Note
\begin{align*}
\mathrm{Cov}_0\bigg(q_{x, n}(X_1, \ldots, X_n) , \sum_{i=1}^n \ell_0^*(X_i, Y_i)\bigg) &= \sum_{i=1}^n \E_0 [q_{x, n}(X_1, \ldots, X_n) \ell_0^*(X_i, Y_i)] \\
&= \sum_{i=1}^n \E_0[ q_{x, n}(X_1, \ldots, X_n) \E_0[\ell_0^*(X_i, Y_i) ~|~ X_i] ] \\
&= 0,
\end{align*}
where the final equality follows since $\E_0[\ell_0^*(X_i, Y_i) ~|~ X_i] = 0$. The other term is handled similarly.
\end{proof}

\begin{remark}
Dette et al. \cite{DSS2013} defined a statistic $\hat{r}_n$ (see \cite[equation (13)]{DSS2013}), which under the right assumptions, is consistent for the correlation $C$ defined by \eqref{eq:chacorr-def}, and moreover satisfies a central limit theorem. If additionally, $(X, Y)$ are independent, then (see in particular equation (24) in the proof of \cite[Theorem 3]{DSS2013})
\[ \frac{1}{6} \hat{r}_n + \frac{1}{3} = T_0(Y_1, \ldots, Y_n) + T_1(X_1, \ldots, X_n) + T_2(Y_1, \ldots, Y_n) + o_P(n^{-1/2}),\]
for some functions $T_0, T_1, T_2$. Thus the previous proposition implies that $e(\hat{r}_n) = 0$ for quadratic mean differentiable families which have score function $\dot{\ell}_0^*$. On the other hand, see Shi et al. \cite{SDH2020} for a family for which $\hat{r}_n$ is rate optimal. 

We conjecture that the poor local behavior of $\hat{C}_n$ and $\hat{r}_n$ compared with other tests of independence which are consistent against all alternatives, as shown by Shi et al. \cite{SDH2020}, is due to their property of being exactly 1 at the population level if and only if $Y = g(X)$ a.s. for some $g$, which is not shared by any of the competitors. We believe that if one adds smoothness assumptions to $H : Y = g(X)$ a.s., then that gives these statistics uniquely good local power against alternatives to this hypothesis. 
\end{remark}

\section*{Acknowledgements}
We thank Sourav Chatterjee for facilitating this collaboration, as well as for helpful conversations. We thank Holger Dette for pointing out an important reference. We thank Hongjian Shi, Fang Han, and Mathias Drton for valuable comments on the local power calculations, which led to an improvement of our results.

\end{document}